\theoremstyle{plain}
 \newtheorem{theorem}{Theorem}
 \newtheorem{corollary}{Corollary}
\newtheorem{lemma}{Lemma}
\DeclareMathOperator{\re}{Re}
\begin{document}

\title{The number of prime factors of integers with dense divisors}

\begin{abstract}
We show that for integers $n$, whose ratios of consecutive divisors are bounded above by an arbitrary constant,
the normal order of the number of prime factors is $C \log \log n$,
where $C=(1-e^{-\gamma})^{-1} = 2.280...$ and $\gamma$ is Euler's constant. 
We explore several applications and resolve a conjecture of Margenstern about 
practical numbers.
\end{abstract}

\author{Andreas Weingartner}
\address{ 
Department of Mathematics,
351 West University Boulevard,
 Southern Utah University,
Cedar City, Utah 84720, USA}
\email{weingartner@suu.edu}
\date{November 11, 2021}
\subjclass[2010]{11N25, 11N37}
\maketitle

\section{Introduction}

We say that a positive integer $n$ is $t$-dense if the ratios of consecutive divisors of $n$ do not exceed $t$. 
Let $\mathcal{D}(x,t)$ denote the set of $t$-dense integers $n\le x$ and write $D(x,t)= |\mathcal{D}(x,t)|$.
Let $\omega(n)$ (resp. $\Omega(n)$) be the number of prime factors of $n$, counted without (resp. with) multiplicity. 

Theorem \ref{thmave} gives the average and normal order of $\omega(n)$ and $\Omega(n)$ for the $t$-dense integers.
We write $\log_2 x$ for $\log\log x$ and define $E(x,t)$, the approximate expected value of $\omega(n)$ on 
 $\mathcal{D}(x,t)$, by  
$$
E(x,t):=C\log_2 x- (C-1)\log_2 t, \quad C:=(1-e^{-\gamma})^{-1}=2.280291...
$$
\begin{theorem}\label{thmave}
Let $\xi(x) \to \infty$. Uniformly for $x\ge t\ge 2$,
\begin{equation}\label{eqave}
\frac{\sum_{n\in \mathcal{D}(x,t)}\omega(n)}{D(x,t)}
=E(x,t)+O\left(1\right)
\end{equation}
and
\begin{equation}\label{eqnorm}
\Bigl|\left\{ n \in \mathcal{D}(x,t): |\omega(n)-E(x,t)|> \xi(x)\sqrt{\log_2 x}\right\}\Bigr| \ll \frac{D(x,t)}{\xi(x)^2}.
\end{equation}
These results also hold with $\Omega$ in place of $\omega$. 
\end{theorem}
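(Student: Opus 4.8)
The plan is to run a second‑moment (Tur\'an--Kubilius type) argument adapted to the set $\mathcal D(x,t)$. For a prime power $p^k$ put $D_{p^k}(x,t):=|\{n\in\mathcal D(x,t):p^k\mid n\}|$, and for distinct primes $p,q$ put $D_{p,q}(x,t):=|\{n\in\mathcal D(x,t):pq\mid n\}|$. Counting incidences gives
\[
\sum_{n\in\mathcal D(x,t)}\omega(n)=\sum_{p\le x}D_p(x,t),\qquad
\sum_{n\in\mathcal D(x,t)}\omega(n)^2=\sum_{p\le x}D_p(x,t)+2\sum_{p<q}D_{p,q}(x,t),
\]
so, by Chebyshev's inequality, \eqref{eqave} and \eqref{eqnorm} follow once we prove
\[
\sum_{p\le x}D_p(x,t)=E(x,t)\,D(x,t)+O\bigl(D(x,t)\bigr)\qquad\text{and}\qquad
\sum_{p<q}D_{p,q}(x,t)\le\tfrac12\,E(x,t)^2D(x,t)+O\bigl(D(x,t)\log_2 x\bigr);
\]
indeed, since $E(x,t)\asymp\log_2 x$, these two estimates give $\sum_{n\in\mathcal D(x,t)}(\omega(n)-E(x,t))^2\ll D(x,t)\log_2 x$. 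The statements with $\Omega$ then follow with no extra work, since both $\sum_{n\in\mathcal D(x,t)}(\Omega-\omega)(n)=\sum_p\sum_{k\ge2}D_{p^k}(x,t)$ and $\sum_{n\in\mathcal D(x,t)}(\Omega-\omega)(n)^2$ are $\ll D(x,t)$; this rests on the bound $D_{p^k}(x,t)\ll D(x,t)/p^k$ (and its two–variable analogue), which the analysis below supplies, together with $\sum_p\sum_{k\ge2}p^{-k}<\infty$. Consequently $\Omega$ has the same approximate mean $E(x,t)$ and a variance $\ll D(x,t)\log_2 x$.

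The crux is the evaluation of $D_p(x,t)$ and $D_{p,q}(x,t)$, for which I would exploit the multiplicative description of $t$-dense integers: writing $n=p_1^{a_1}\cdots p_r^{a_r}$ with $p_1<\dots<p_r$, the integer $n$ is $t$-dense if and only if $p_j\le t\,p_1^{a_1}\cdots p_{j-1}^{a_{j-1}}$ for every $j$ (the empty product for $j=1$ read as $1$, i.e.\ $p_1\le t$). Given a prime $p$ and a $t$-dense $n$ with $p\mid n$, factor $n=a\,p^{b}c$ with $a$ the part of $n$ composed of primes $<p$, $b=v_p(n)$ the exponent of $p$, and $c$ the part composed of primes $>p$. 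The $t$-density of $n$ then decouples into: $a$ is $t$-dense with largest prime factor $<p\le ta$; and $c$ (whose prime factors all exceed $p$) is such that $a\,p^{b}c$ is $t$-dense, a condition constraining $c$ in terms of $a$. This realises $D_p(x,t)$ as a convolution over $a$ of counting functions of $t$-dense integers with a restricted least prime factor, and $D_{p,q}(x,t)$ similarly after splitting $n$ at both $p$ and $q$. Feeding in the known uniform asymptotics for $D(x,t)$, and for its variants with largest or smallest prime factor restricted (from the work of Tenenbaum, Saias and Weingartner), one finds that $D_p(x,t)$ is, to leading order, $D(x,t)/p$ when $p\le t$, whereas for $t<p\le x$ the count is governed by the proportion of $t$-dense integers whose part below $p$ exceeds $p/t$. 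Summing over $p$: the primes $p\le t$ contribute $(\log_2 t)\,D(x,t)+O(D(x,t))$ by Mertens' theorem, and the primes $t<p\le x$ contribute $C\log(\log x/\log t)\,D(x,t)+O(D(x,t))$, where the constant $C=(1-e^{-\gamma})^{-1}$ enters precisely through the integral equation governing the asymptotic density of $\mathcal D(\cdot,t)$ — this is the point at which $e^{-\gamma}$ appears, via an identity of the same nature as $\int_0^\infty\rho(u)\,du=e^{\gamma}$ for Dickman's function — and adding the two contributions gives exactly $E(x,t)\,D(x,t)+O(D(x,t))$. The bound on $\sum_{p<q}D_{p,q}(x,t)$ follows the same template; its key ingredient is a near‑factorisation $D_{p,q}(x,t)\le\frac{D_p(x,t)D_q(x,t)}{D(x,t)}\bigl(1+O(1/\log p)\bigr)$ for $p<q$, whose error terms, summed against $\sum_{p<q}1/(pq)$, total only $O(D(x,t)\log_2 x)$.

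The main obstacle is uniformity over the whole range $x\ge t\ge2$: one needs the auxiliary counting functions for $t$-dense integers — with largest or smallest prime factor bounded, and carrying the extra size parameter coming from the factor $a$ — in sufficiently sharp form (a saving of a power of $\log$ in the relative error) and with error terms uniform in every parameter, so that after summing the convolution errors over $p$ (against $\sum 1/p$) and over pairs $p<q$ (against $\sum 1/(pq)$) only $O(D(x,t))$, respectively $O(D(x,t)\log_2 x)$, survives. Particularly delicate are the transition near $p\asymp t$ between the \emph{free} regime $D_p(x,t)\sim D(x,t)/p$ and the constrained one, and the range where $t$ is close to $x$, so that $\log x/\log t$ is bounded and $\mathcal D(x,t)$ is a positive proportion of all integers up to $x$; in the extreme case $t=x$ one has $\mathcal D(x,x)=\{1,\dots,\lfloor x\rfloor\}$ and $E(x,x)=\log_2 x$, so the theorem must there reduce to the Hardy--Ramanujan normal order theorem — a reassuring consistency check. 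Once these uniform inputs are available, extracting the constant $C$ and invoking Chebyshev's inequality are routine.
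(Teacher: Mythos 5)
Your overall framework is the same as the paper's: both proofs compute $\sum_{n\in\mathcal D(x,t)}\omega(n)=\sum_{p\le x}D_p(x,t)$ and the second moment through counts $D_{pq}(x,t)$, handle $\Omega-\omega$ by the convergent sum over higher prime powers, and finish with Chebyshev. The gap is that everything hard is assumed. You write that ``known uniform asymptotics'' for $t$-dense integers with restricted largest/smallest prime factor, fed into a convolution over the part of $n$ below $p$, yield $D_p(x,t)$ and $D_{p,q}(x,t)$ to the required precision; but no such estimates exist in the literature in the uniformity you need ($x\ge t\ge 2$, all $p,q\le x$, with a power-of-$\log$ saving in the relative error). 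Producing them is the actual content of this paper: Theorem \ref{thmDqdv}, proved by induction on $\Omega(q)$ via the identities \eqref{phik} and \eqref{lambdak} and a Buchstab-type integral equation, gives $D_q(x,t)=xd(v)\eta_{q,t}\bigl(1+O(\log 2q/\log 2x)\bigr)$; and Theorem \ref{thmcpq}, proved from the differentiated identity \eqref{muk} together with $\mu_n=\log t-\gamma+O(e^{-\sqrt{\log nt}})$, supplies the second-order information on $c_p$ and $c_{pq}$. That precision is not a luxury: to obtain the $O(1)$ in \eqref{eqave} and the variance bound $\ll D(x,t)\log_2 x$ behind \eqref{eqnorm}, you must sum errors of the quality $qc_q=c_\theta+C\log q+O(e^{-\sqrt{\log t}})$ ($q\le t$) and $qc_q=Cc_\theta\{1+O(1/\log q+\log^2 t/\log^2 q)\}$ over primes; a leading-order statement such as $D_p\sim D/p$ ($p\le t$) or $D_p\sim CD/p$ ($p>t$) with unquantified $o(1)$ errors only gives $E(x,t)+o(\log_2 x)$, which is useless for both parts of the theorem.

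The same objection applies with more force to your key second-moment ingredient, the near-factorisation $D_{p,q}\le\frac{D_pD_q}{D}\bigl(1+O(1/\log p)\bigr)$: this is asserted, not proved, and it encodes exactly the content of \eqref{cpqtasymp1}--\eqref{cpqtasymp3}. Note also that as a two-sided asymptotic it is false in part of the range: for $p,q\le t$ with $\log p\asymp\log q\asymp\log t$, one has $pq\,c_{pq}=c_\theta+C\log pq+O(e^{-\sqrt{\log t}})$ while $pq\,c_pc_q/c_\theta=c_\theta+C\log pq+C^2\log p\log q/c_\theta+\ldots$, and the extra term is of the same order as the main term, so only the one-sided inequality survives --- and proving even that requires the machinery you are bypassing. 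A smaller issue: the bound $D_{p^k}(x,t)\ll D(x,t)/p^k$ uniformly in $k$ is not delivered by the analysis you outline (results of Theorem \ref{thmDqdv} type are for bounded $\Omega(q)$); the paper instead uses the elementary inequality $D_q(x,t)\le D(x/q,qt)$ of Lemma \ref{DqUB}, which suffices for the $\Omega-\omega$ terms. In short, your proposal reproduces the paper's final section but leaves the theorems that section rests on unproved, and the one shortcut you introduce in their place is unsubstantiated.
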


Note that $\log_2 x \le E(x,t) \le C \log_2 x +O(1)$, as $x\ge t\ge 2$.
If $t=x$, then $E(x,t)=\log_2 x$ and $ \mathcal{D}(x,t)=[1,x]\cap \mathbb{N},$ so
Theorem \ref{thmave}  contains the well-known results about the average and normal order of $\omega(n)$ on $\mathbb{N}$. 
If $t\ge 2$ is constant, then $E(x,t)= C\log_2 x +O(1)$, so that the average and normal order of $\omega(n)$ 
for $t$-dense integers $n$ is $C \log_2 n$. 

The $t$-dense integers are a special case of a family of integer sequences that arise as follows. 
Let $\theta$ be an arithmetic function.
Let $\mathcal{B}=\mathcal{B}_\theta$ be the set of positive integers containing $n=1$ and all those $n \ge 2$ with prime factorization $n=p_1^{\alpha_1} \cdots p_k^{\alpha_k}$, $p_1<p_2<\ldots < p_k$, which satisfy 
\begin{equation}\label{Bdef}
p_{i} \le \theta\big( p_1^{\alpha_1} \cdots p_{i-1}^{\alpha_{i-1}} \big) \qquad (1\le i \le k).
\end{equation}
We write $\mathcal{B}(x)=\mathcal{B}\cap [1,x]$ and $B(x)=|\mathcal{B}(x)|$. 
When $\theta(n)=nt$, then $\mathcal{B}$ is the set of $t$-dense integers \cite{Saias1,Ten86,PDD}. 
If $\theta(n)=\sigma(n)+1$, where $\sigma(n)$ is the sum of the positive divisors of $n$, then 
$\mathcal{B}$ is the set of practical numbers \cite{Mar, Saias1, Sier, Sri,  Stew, Ten86,PDD}, i.e. integers $n$ such that every $m\le n$ can be expressed as a sum of distinct positive divisors of $n$.

We will derive Corollaries \ref{corpra} through \ref{cortau} from Theorem \ref{thmave} in Section \ref{seccorpra}.
\begin{corollary}\label{corpra}
 Assume $\theta$ satisfies $\max(2,n)\le \theta(n) \le n f(n)$, where $f$ is non-decreasing.  Let $\xi(x) \to \infty$. We have 
 \begin{equation}\label{cor1ave}
\frac{\sum_{n\in \mathcal{B}(x)}\omega(n)}{B(x)} =
  C \log_2 x \left\{ 1+ O\left(\left(\frac{\log f(x)}{\log_2 x}\right)^{1/3}\right)\right\}
 \end{equation}
 and
\begin{equation}\label{cor1norm}
\Bigl|\bigl\{ n \in \mathcal{B}(x): |\omega(n)-C\log_2 x|>  \xi(x)\sqrt{\log_2 x} \bigr\}\Bigr| 
\ll  B(x)\frac{\log f(x)}{\xi(x)^2}.
\end{equation}
These results also hold with $\Omega$ in place of $\omega$. 
\end{corollary}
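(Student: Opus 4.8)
The plan is to transfer Theorem~\ref{thmave}, applied to $\mathcal D(x,f(x))$, to the sparser set $\mathcal B(x)$. I would begin with three structural observations. Since $\theta(1)\ge 2$ and $\theta(1)\le f(1)$, we have $f\ge 2$. Recalling that $n=p_1^{\alpha_1}\cdots p_k^{\alpha_k}$ (with $p_1<\cdots<p_k$) is $t$-dense exactly when $p_i\le t\,p_1^{\alpha_1}\cdots p_{i-1}^{\alpha_{i-1}}$ for all $i$, and using $\theta(m)\le m f(m)\le m f(x)$ for $m\le x$ (as $f$ is non-decreasing), condition \eqref{Bdef} forces every $n\in\mathcal B(x)$ to be $f(x)$-dense; hence $\mathcal B(x)\subseteq\mathcal D(x,f(x))$. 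Next, with $\theta_0(n):=\max(2,n)$ we have $\theta\ge\theta_0$ pointwise, so $\mathcal B_{\theta_0}\subseteq\mathcal B_\theta$; and since $\theta_0(n)\le 2n$, the sequence $\mathcal B_{\theta_0}$ falls under the classical theory of dense-divisor/practical-type integers, so $B_{\theta_0}(x)\gg x/\log x$ (\cite{Saias1,PDD}) and therefore $B(x)\gg x/\log x$. Combined with $D(x,t)\ll x(1+\log t)/\log x$ (\cite{Ten86}), this gives the comparison $D(x,f(x))\ll B(x)\log f(x)$, which I would use throughout. Finally, $|E(x,f(x))-C\log_2 x|=(C-1)|\log\log f(x)|\ll(\log f(x))^{1/3}$.

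To prove \eqref{cor1norm} I would apply \eqref{eqnorm} to $\mathcal D(x,f(x))$. If $n\in\mathcal B(x)$ has $|\omega(n)-C\log_2 x|>\xi(x)\sqrt{\log_2 x}$ and $\xi(x)\sqrt{\log_2 x}\ge 2(C-1)|\log\log f(x)|$, then $|\omega(n)-E(x,f(x))|>\tfrac12\xi(x)\sqrt{\log_2 x}$, so by $\mathcal B(x)\subseteq\mathcal D(x,f(x))$ and \eqref{eqnorm} the number of such $n$ is $\ll D(x,f(x))/\xi(x)^2\ll B(x)\log f(x)/\xi(x)^2$. In the complementary range $\xi(x)^2\ll\log f(x)$, so $B(x)\log f(x)/\xi(x)^2\gg B(x)$ and the bound is trivial. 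This proves \eqref{cor1norm}; the identical argument, with the $\Omega$-version of \eqref{eqnorm}, handles $\Omega$.

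For \eqref{cor1ave}, set $W:=(\log f(x))^{1/3}(\log_2 x)^{2/3}$; one may assume $W<\tfrac12 C\log_2 x$, since otherwise the asserted error term is $\gg 1$ and only an upper bound for $\sum_{n\in\mathcal B(x)}\omega(n)/B(x)$ is claimed, which follows from a cruder form of the argument below. For the lower bound, \eqref{cor1norm} (with $\xi(x)=W/\sqrt{\log_2 x}\to\infty$) shows that all but $O(B(x)\log f(x)\log_2 x/W^2)$ of the $n\in\mathcal B(x)$ satisfy $\omega(n)\ge C\log_2 x-W$, so
$$\sum_{n\in\mathcal B(x)}\omega(n)\ \ge\ (C\log_2 x-W)\,B(x)\bigl(1-O(\log f(x)\log_2 x/W^2)\bigr),$$
which by the choice of $W$ equals $C\log_2 x\cdot B(x)\bigl(1+O((\log f(x)/\log_2 x)^{1/3})\bigr)$. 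For the upper bound I would split $\sum_{n\in\mathcal B(x)}\omega(n)$ over the ranges $\omega(n)\le C\log_2 x+W$, $C\log_2 x+W<\omega(n)\le A\log_2 x$, and $\omega(n)>A\log_2 x$, with $A$ a large absolute constant. The first range contributes $\le(C\log_2 x+W)B(x)$; the second contributes $\ll A\log_2 x\cdot B(x)\log f(x)\log_2 x/W^2$ by \eqref{cor1norm}; and the third contributes, via $\mathcal B(x)\subseteq\mathcal D(x,f(x))$ and Rankin's trick $\sum_{n\le x,\,\omega(n)>A\log_2 x}\omega(n)\ll 2^{-A\log_2 x}x(\log x)^{2}=x(\log x)^{2-A\log 2}$ (using $\sum_{n\le x}\omega(n)2^{\omega(n)}\ll x(\log x)^2$), at most $o(B(x))$ once $A$ is large. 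Balancing the first two error terms through the definition of $W$ gives $\sum_{n\in\mathcal B(x)}\omega(n)\le C\log_2 x\cdot B(x)\bigl(1+O((\log f(x)/\log_2 x)^{1/3})\bigr)$, which together with the lower bound proves \eqref{cor1ave}. The $\Omega$-statements follow the same way, with $\sum_{n\le x}\Omega(n)(3/2)^{\Omega(n)}\ll x(\log x)^{3/4}$ replacing the Rankin bound above.

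I expect the crux to be the exponent $1/3$: both directions of \eqref{cor1ave} hinge on optimizing the truncation level $W$, and in the upper bound the third range $\omega(n)>A\log_2 x$ is essential, since that is precisely where a trivial or second-moment estimate would lose the factor $\log f(x)\asymp D(x,f(x))/B(x)$ measuring how thinly $\mathcal B(x)$ sits inside $\mathcal D(x,f(x))$, and a genuine large-deviation (Rankin-type) bound on $\mathbb Z$ eliminates it there. One should also confirm that every ingredient ($D(x,t)\ll x\log t/\log x$, $B(x)\gg x/\log x$, \eqref{eqnorm}, and the Rankin bounds) is uniform, so that the implied constants in \eqref{cor1ave} and \eqref{cor1norm} are absolute.
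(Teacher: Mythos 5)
Your argument follows the paper's route almost exactly: embed $\mathcal{B}(x)$ in $\mathcal{D}(x,f(x))$, apply Theorem \ref{thmave}, compare $E(x,f(x))$ with $C\log_2 x$, use $B(x)\gg x/\log x$ together with $D(x,f(x))\ll x\log f(x)/\log x$ (Lemma \ref{DqUB}), and deduce \eqref{cor1ave} from \eqref{cor1norm} by truncation with the same optimal choice $\xi(x)\asymp(\log f(x))^{1/3}(\log_2 x)^{1/6}$. Your treatment of the extreme tail $\omega(n)>A\log_2 x$ by a Chebyshev/Rankin bound (via $\sum_{n\le x}\omega(n)2^{\omega(n)}\ll x\log^2 x$, and the $(3/2)^{\Omega(n)}$ variant for $\Omega$) in place of the paper's appeal to Nicolas' estimate \eqref{Niceq} is correct and, if anything, more elementary; that part is a legitimate alternative and costs nothing.

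The one genuine gap is the regime $f(x)>x$. Theorem \ref{thmave} is stated uniformly only for $x\ge t\ge 2$, so your application of \eqref{eqnorm} with $t=f(x)$ is out of range there, and the corollary places no upper bound on $f$, so this case cannot be ignored; note that your reduction to the complementary range $\xi(x)^2\ll\log f(x)$ does not dispose of it, since one can have $\log x<\log f(x)\le\xi(x)^2$, and then \eqref{eqnorm} is simply unavailable with $t=f(x)$. The paper handles this case separately: there $\xi(x)^2\ge\log f(x)>\log x$ forces every exceptional $n$ to satisfy $\omega(n)>\xi(x)>\sqrt{\log x}$, and Nicolas' bound \eqref{Niceq} (or, in your own spirit, $2^{-\sqrt{\log x}}\sum_{n\le x}2^{\omega(n)}\ll x\log x\,2^{-\sqrt{\log x}}$) shows there are $\ll x(\log x)^{-K}$ such $n\le x$, which is $\ll B(x)\log f(x)/\xi(x)^2$ once one notes that the exceptional set is empty unless $\xi(x)\ll\log x$. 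Alternatively, since $\mathcal{D}(x,f(x))=\mathcal{D}(x,x)$ when $f(x)\ge x$, you could run your argument with $t=\min(f(x),x)$, checking that $|E(x,x)-C\log_2 x|=(C-1)\log_2 x\le\tfrac12\xi(x)\sqrt{\log_2 x}$ in the nontrivial range $\xi(x)^2\ge\log f(x)>\log x$ and that $D(x,x)\ll x\ll B(x)\log f(x)$. Either patch is short, but as written your proof of \eqref{cor1norm}, and hence of \eqref{cor1ave}, is incomplete when $f(x)$ exceeds $x$.
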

Conjecture 5 of Margenstern \cite{Mar} proposes that, for practical numbers, 
$\sum_{n\in \mathcal{B}(x)}\omega(n) \sim \mu x/(\log x)^\eta$,
for some constants $\mu >0$ and $1/2 < \eta < 1$. 
The estimate \eqref{cor1ave} disproves this conjecture, since 
$B(x)\sim c_\theta x/\log x$ by \cite[Thm. 1.2]{PDD}, if $\max(2,n)\le  \theta(n) \ll n (\log 2n)/(\log_2 3n)^{1+\varepsilon}$
for $n\ge 1$. 

Corollary \ref{cor2} shows that almost all large practical numbers $n$ have about $C\log_2 n$ prime factors. 

\begin{corollary}\label{cor2}
If $\theta$ satisfies $\max(2,n)\le \theta(n) \ll n  (\log 2n)^{o(1)}$, 
then the average and normal order of $\omega(n)$ on $\mathcal{B}$ is $C\log_2 n$.
That is, as $x\to\infty$,
 \begin{equation}\label{cor2ave}
\sum_{n\in \mathcal{B}(x)}\omega(n) \sim \sum_{n\in \mathcal{B}(x)} C \log_2 n
  \end{equation}
 and all but $o(B(x))$ integers $n \in \mathcal{B}(x)$ satisfy
\begin{equation}\label{cor2norm}
\omega(n)=(1+o(1)) C \log_2 n .
 \end{equation}
These results also hold with $\Omega$ in place of $\omega$. 
\end{corollary}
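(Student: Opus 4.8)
The plan is to obtain Corollary~\ref{cor2} from Corollary~\ref{corpra}, by first manufacturing an admissible $f$ out of the hypothesis $\theta(n)\ll n(\log 2n)^{o(1)}$, and then replacing $\log_2 x$ by $\log_2 n$ inside the sums.

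First I would fix $f$. The hypothesis unpacks to: for every $\varepsilon>0$ there is a constant $C_\varepsilon\ge 1$ with $\theta(n)\le C_\varepsilon\, n(\log 2n)^\varepsilon$ for all $n\ge1$. Setting $f(n):=\sup_{m\le n}\theta(m)/m$ produces a non-decreasing function with $\max(2,n)\le\theta(n)\le nf(n)$, so Corollary~\ref{corpra} applies; and $f(n)\le C_\varepsilon(\log 2n)^\varepsilon$ forces $\log f(x)\le\log C_\varepsilon+\varepsilon\log_2 2x$, so that $\log f(x)=o(\log_2 x)$ upon letting $x\to\infty$ and then $\varepsilon\to0$. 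With this $f$, \eqref{cor1ave} becomes
\[
\sum_{n\in\mathcal{B}(x)}\omega(n)=B(x)\bigl(C\log_2 x+o(\log_2 x)\bigr),
\]
while, choosing $\xi(x)\to\infty$ with $\log f(x)=o(\xi(x)^2)$ and $\xi(x)^2=o(\log_2 x)$ (possible precisely because $\log f(x)=o(\log_2 x)$), \eqref{cor1norm} shows that all but $\ll B(x)\log f(x)/\xi(x)^2=o(B(x))$ of the integers $n\in\mathcal{B}(x)$ satisfy $|\omega(n)-C\log_2 x|\le\xi(x)\sqrt{\log_2 x}=o(\log_2 x)$.

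The only remaining step is to see that $C\log_2 x$ may be replaced by $C\log_2 n$ in both conclusions. By \cite[Thm.~1.2]{PDD}, the present hypothesis on $\theta$ (which implies in particular $\theta(n)\ll n(\log 2n)/(\log_2 3n)^{1+\varepsilon}$) gives $B(x)\asymp x/\log x$, hence $B(\sqrt x)=o(B(x))$; so all but $o(B(x))$ integers $n\in\mathcal{B}(x)$ lie in $[\sqrt x,x]$, where $\log_2 n=\log_2 x+O(1)$. For the $n$ in $[\sqrt x,x]$ that also survive the previous paragraph we get $\omega(n)=C\log_2 x+o(\log_2 x)=(1+o(1))C\log_2 n$, which is \eqref{cor2norm}. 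For the average, splitting $\mathcal{B}(x)$ at $\sqrt x$ (and using $B(\sqrt x)=o(B(x))$ again) gives $\sum_{n\in\mathcal{B}(x)}C\log_2 n=B(x)\,C\log_2 x\,(1+o(1))$, and comparing with the display above yields \eqref{cor2ave}. The two assertions with $\Omega$ in place of $\omega$ follow verbatim from the $\Omega$-versions of Corollary~\ref{corpra}.

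There is no serious obstacle here; the only points that need any care are the passage from the qualitative bound $\theta(n)\ll n(\log 2n)^{o(1)}$ to a concrete non-decreasing $f$ with $\log f(x)=o(\log_2 x)$, and the appeal to the lower bound $B(x)\gg x/\log x$ needed to discard the negligibly many members of $\mathcal{B}(x)$ below $\sqrt x$.
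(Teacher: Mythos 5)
Your proposal is correct and follows essentially the same route as the paper: define $f(x)=\max_{n\le x}\theta(n)/n$, note $\log f(x)=o(\log_2 x)$, and feed this into Corollary \ref{corpra} with a suitable choice of $\xi$ (the paper takes $\xi(x)=(\log f(x)\log_2 x)^{1/4}$, which satisfies exactly the two conditions you impose). The only difference is that you spell out the passage from $\log_2 x$ to $\log_2 n$ via $B(\sqrt{x})=o(B(x))$, a step the paper treats as implicit.
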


Let $\tau(n)$ be the number of positive divisors of $n$.
\begin{corollary}\label{cortau}
If $\theta$ satisfies $\max(2,n)\le \theta(n) \ll n  (\log 2n)^{o(1)}$, then
\begin{equation}\label{cortauave}
 (\log x)^{C \log 2 -o(1)} < \frac{\sum_{n \in \mathcal{B}(x)} \tau(n)}{B(x)} \ll (\log x)^{e \log 2},
 \end{equation}
as $x \to \infty$, and all but $o(B(x))$ integers $n\in \mathcal{B}(x)$ satisfy
\begin{equation}\label{cortaunorm}
\tau(n) = (\log n)^{C \log 2 + o(1)} = (\log n)^{1.580577... + o(1)}.
\end{equation}
\end{corollary}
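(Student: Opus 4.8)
The plan is to bracket $\tau(n)$ by $2^{\omega(n)}\le\tau(n)\le 2^{\Omega(n)}$ and feed in Theorem~\ref{thmave}, using throughout the identity $2^{a\log_2 m}=(\log m)^{a\log 2}$ (for $m\ge 3$) and the numerical values $C\log 2=1.580577\ldots$ and $e\log 2=1.884169\ldots$. The normal order \eqref{cortaunorm} is then immediate: by Corollary~\ref{cor2}, all but $o(B(x))$ integers $n\in\mathcal B(x)$ satisfy $\omega(n)=(1+o(1))C\log_2 n$ and $\Omega(n)=(1+o(1))C\log_2 n$, and for such $n$
$$(\log n)^{(1+o(1))C\log 2}=2^{\omega(n)}\le\tau(n)\le 2^{\Omega(n)}=(\log n)^{(1+o(1))C\log 2},$$
so $\tau(n)=(\log n)^{C\log 2+o(1)}=(\log n)^{1.580577\ldots+o(1)}$ off a set of size $o(B(x))$.

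For the lower bound in \eqref{cortauave}, fix a small $\varepsilon>0$. Corollary~\ref{cor2} gives $\omega(n)\ge(1-\varepsilon)C\log_2 n$ for all but $o(B(x))$ integers $n\in\mathcal B(x)$, and after discarding the $\le\sqrt x=o(B(x))$ integers below $\sqrt x$ we may assume $\log_2 n\ge\log_2 x-\log 2$, so that $\tau(n)\ge 2^{\omega(n)}\gg_\varepsilon(\log x)^{(1-\varepsilon)C\log 2}$ on the remaining set. Summing gives $\sum_{n\in\mathcal B(x)}\tau(n)\gg_\varepsilon B(x)(\log x)^{(1-\varepsilon)C\log 2}$, and since $\varepsilon$ is arbitrary this yields $\sum_{n\in\mathcal B(x)}\tau(n)/B(x)>(\log x)^{C\log 2-o(1)}$.

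The upper bound in \eqref{cortauave} is where the real work lies. I would bound $\tau(n)\le 2^{\Omega(n)}$ and aim to prove
$$\sum_{n\in\mathcal B(x)}2^{\Omega(n)}\ll x(\log x)^{e\log 2-1};$$
dividing by $B(x)\asymp x/\log x$ (valid here by \cite[Thm.~1.2]{PDD}, since $\theta(n)\ll n(\log 2n)^{o(1)}$) then gives the claimed $\ll(\log x)^{e\log 2}$. The obstacle is that this does \emph{not} follow from Theorem~\ref{thmave} alone: that theorem controls $\Omega(n)$ only near its mean $C\log_2 n$, and a Chebyshev-type bound on the tail is useless once weighted by $2^{\Omega(n)}$, while passing through the full set $\{n\le x\}$ only yields the exponent $2$. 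What is needed is a Rankin-type exponential-moment estimate
$$\sum_{n\in\mathcal B(x)}z^{\Omega(n)}\ll_z x(\log x)^{\mu(z)}\qquad(1\le z\le 2),$$
with a growth exponent $\mu$ satisfying $\mu(1)=-1$ (recovering $B(x)\asymp x/\log x$), $\mu'(1)=C$ (recovering Corollary~\ref{corpra}), and, crucially, $\mu(2)=e\log 2-1$. I would derive such an estimate from the same generating-function / Buchstab-type machinery that underlies \cite{PDD} and the proof of Theorem~\ref{thmave}: attaching a factor $z$ to each prime occurring in \eqref{Bdef} replaces the Dickman-type kernel controlling $B(x)$ by a $z$-twisted kernel, and one reads off $\mu(2)=e\log 2-1$ from the associated delay-differential equation at $z=2$ — this is precisely where the constant $e$ (via $\sum_{k\ge 0}1/k!$, equivalently via an integral of the twisted Dickman function) enters. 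Granting the case $z=2$, the upper bound in \eqref{cortauave} is immediate; if one only obtains the estimate for $z<2$, a routine split at $\Omega(n)=e\log_2 x$ together with letting $z\uparrow 2$ finishes it. Pinning down $\mu(2)$ — that is, identifying and estimating the $z$-twisted kernel — is the step I expect to be hardest; everything else is routine.
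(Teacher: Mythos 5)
Your treatment of \eqref{cortaunorm} and of the lower bound in \eqref{cortauave} matches the paper: both are immediate from $2^{\omega(n)}\le\tau(n)\le 2^{\Omega(n)}$ together with Corollary \ref{cor2}. The gap is in the upper bound. Your central claim there, a restricted exponential-moment estimate $\sum_{n\in\mathcal{B}(x)}z^{\Omega(n)}\ll_z x(\log x)^{\mu(z)}$ with $\mu(2)=e\log 2-1$, is asserted rather than proved (``granting the case $z=2$''), and the proposed derivation via a $z$-twisted kernel in the Buchstab-type machinery of \cite{PDD} is speculative: nothing in your sketch pins down $\mu(2)$, and the guess that $e$ arises from a twisted Dickman analysis of $\mathcal{B}$ is not where the constant comes from (indeed \cite{avetaupaper} shows the sharp exponent for $\sum_{n\in\mathcal{B}(x)}\tau(n)$ is $(\log x)^{0.713\ldots}$, so $e\log 2$ is only a convenient bound, not what a correct kernel analysis would yield). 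Your fallback also does not close as stated: having replaced $\tau(n)$ by $2^{\Omega(n)}$, Rankin's trick with a restricted moment at $z<2$ gives $\sum_{k\ge e\log_2 x}2^{k}z^{-k}\,x(\log x)^{\mu(z)}$, and since $k$ runs up to $\log x/\log 2$ the factor $(2/z)^{k}$ produces a power of $x$, not of $\log x$.

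The paper's actual argument needs no new information about $\mathcal{B}$ beyond $B(x)\gg x/\log x$: split at $\Omega(n)=e\log_2 x$. For $\Omega(n)\le e\log_2 x$ use the trivial bound $\tau(n)\le 2^{\Omega(n)}\le(\log x)^{e\log 2}$, contributing at most $B(x)(\log x)^{e\log 2}$. For $\Omega(n)\ge e\log_2 x$ drop the condition $n\in\mathcal{B}$ entirely and invoke Lemma \ref{lemOmTau}, an unrestricted Rankin-type bound proved from $\sum_{n\le x}\tau(n)y^{\Omega(n)}\ll x(\log x)^{2y-1}$ with $y=\alpha/2=e/2<2$ (so the delicate regime $z=2$ is never approached, and crucially the weight kept inside the moment is $\tau(n)$, not $2^{\Omega(n)}$); this gives $\ll x(\log x)^{e\log 2-1}\ll B(x)(\log x)^{e\log 2}$. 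The constant $e$ is simply the crossover where the unrestricted tail exponent $\alpha(\log 2-\log\alpha+1)-1$ from Lemma \ref{lemOmTau} meets the trivial-bound exponent $\alpha\log 2-1$, i.e. $\log\alpha=1$. To make your route rigorous you would have to actually establish the restricted moment bound near $z=2$, which is substantially harder than the corollary itself and is not needed.
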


Conjecture 4 of Margenstern \cite{Mar} says that, in the case of practical numbers, 
$\sum_{n \in \mathcal{B}(x)} \tau(n) \sim \nu x (\log x)^{\delta}$, for constants $1/2 < \nu, \, \delta < 1$. 
The estimate \eqref{cortauave} implies $\delta \in [C \log 2 -1, e\log 2 -1] = [0.580...,0.884...]$, since $B(x) \asymp x/\log x$.
In \cite{avetaupaper}, we prove this conjecture with $\delta=0.713...$ and some constant $\nu>0$. 

The next two corollaries are improvements to the lower bounds of Theorems 1 and 3 of \cite{PW}. 
The proofs of both of these theorems rely on the fact that almost all $n \in \mathcal{B}$ satisfy
$\Omega(n)< (e + \varepsilon) \log_2 n$. With Corollary \ref{cor2}, this can be improved
to $\Omega(n)< (C + \varepsilon) \log_2 n$, 
under the assumption $\theta(n) \ll n (\log n)^{o(1)}$.
In the lower bound of  \cite[Thm. 1]{PW} for the count of practical numbers that are also shifted primes,
which has an exponent of $(e+1)\log(e+1)-e\log(e)+1+\varepsilon = 3.16470...$,
we can replace $e$ by $C$ to get $3.01711...$. 

\begin{corollary}\label{cor1PW}
Fix a nonzero integer $h$ and assume $\theta$ satisfies 
\begin{equation}\label{thetacondcor}
\max(2,n) \le \theta(n) \ll n (\log 2n)^{o(1)}, \quad  \theta(mn)\ll m^{O(1)} \theta(n)\quad (n,m\in \mathbb{N}).
\end{equation}
We have
\begin{equation*}
\frac{x}{(\log x)^{3.01712}} \ll_h  
\bigl|\{p \le x: p \mbox{ prime},\  p-h \in \mathcal{B} \}\bigr| \ll_h \frac{x}{(\log x)^2},
\end{equation*}
where $h$ is not divisible by $\prod_{p\le \theta(1)} p$ in the lower bound.  
\end{corollary}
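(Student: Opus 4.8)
The upper bound is the easier half and is essentially a sieve estimate. Any $n\in\mathcal{B}$ with $n>1$ has its least prime factor $P^-(n)\le\theta(1)$ and its largest prime factor $P^+(n)\le\sqrt{n}\,(\log 2n)^{o(1)}$, and, since every prefix $p_1^{\alpha_1}\cdots p_j^{\alpha_j}$ of an $n\in\mathcal{B}$ again lies in $\mathcal{B}$ while prefix products jump by at most a squaring, one can factor $n=ab$ with $a\in\mathcal{B}$ in a prescribed range $(x^{1/3},x^{1/2}]$ and every prime of $b$ exceeding $P^+(a)$. Hence $\{p\le x:\ p-h\in\mathcal{B},\ p-h>\theta(1)^{2}\}$ is covered by the sets $\{p\le x:\ p\equiv h\ (\mathrm{mod}\ a),\ (p-h)/a\ \text{free of primes}\le P^+(a)\}$ with $a$ running over $\mathcal{B}\cap(x^{1/3},x^{1/2}]$. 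Brun--Titchmarsh for the progression $h\ (\mathrm{mod}\ a)$ (admissible since $a\le x^{1/2}$), combined with the sieve saving $\asymp 1/\log P^+(a)$ coming from the roughness of $(p-h)/a$, reduces the count to $\ll \frac{x}{(\log x)^{2}}\sum_{a}1/\phi(a)$, and the remaining sum over $\mathcal{B}\cap(x^{1/3},x^{1/2}]$ is $O(1)$ because $\mathcal{B}$ has density $\asymp 1/\log t$ by \cite{PDD}; the degenerate case in which $P^+(a)$ is tiny (so the roughness saving is weak) is absorbed by iterating the factorization once more, and here no hypothesis on $h$ is needed.

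For the lower bound I would follow the construction in the proof of \cite[Thm.~1]{PW}, feeding Corollary~\ref{cor2} in place of the weaker input used there. One does not attempt to count primes $p$ with $p-h\in\mathcal{B}$ directly; instead one builds \emph{dense} integers $n\le x$ with $n+h$ prime whose shape forces $n\in\mathcal{B}$. Picking a prime $q_0\le\theta(1)$ with $q_0\nmid h$ (possible precisely under the stated divisibility restriction on $h$), and writing $n=m\cdot r$ with $m\in\mathcal{B}$, $q_0\mid m$, $m\asymp x^{1/2-\delta}$, and $r$ composed only of primes in $(P^+(m),m]$ with $mr\le x$, membership $n\in\mathcal{B}$ is automatic since each new prime is at most the running product (using $\theta(\cdot)\ge\mathrm{id}$). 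Thus
\[
\bigl|\{p\le x:\ p-h\in\mathcal{B}\}\bigr|\ \gg\ \sum_{\substack{m\in\mathcal{B},\ q_0\mid m\\ m\asymp x^{1/2-\delta}}}\ \bigl|\{p\le x:\ p\equiv h\ (\mathrm{mod}\ m),\ (p-h)/m\ \text{composed only of primes in}\ (P^+(m),m]\}\bigr|.
\]
For fixed $m$ the inner count is handled by a lower-bound linear sieve: the $m$-smoothness of $(p-h)/m$ (smoothness level only slightly above $1$, hence a density $\rho(\cdot)\asymp 1$) and the roughness of $(p-h)/m$ (a factor $\asymp 1/\log P^+(m)$) are permissible because the level of distribution for the modulus $m\asymp x^{1/2-\delta}$ is supplied by Bombieri--Vinogradov. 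The outer sum over $m$ then reduces to a sum of $1/(\phi(m)\log P^+(m))$ over dense $m\asymp x^{1/2-\delta}$, which is estimated from below by counting $\mathcal{B}$-numbers of bounded smoothness and few prime factors: here one invokes Corollary~\ref{cor2} to restrict to $\Omega(m)<(C+\varepsilon)\log_2 m$ for all but $o(B(\cdot))$ of them (this restriction keeps $m/\phi(m)$, the singular series, and the number of admissible cofactors $r$ under control). Optimizing $\delta$ and the smoothness cutoff converts the compounded logarithmic losses into a power $(C+\varepsilon+1)\log(C+\varepsilon+1)-(C+\varepsilon)\log(C+\varepsilon)+1$ of $\log x$, which is $<3.01712$ once $\varepsilon>0$ is small, because $(C+1)\log(C+1)-C\log C+1=3.01711\ldots$ and the exponent is continuous in $\varepsilon$; the side condition $\theta(mn)\ll m^{O(1)}\theta(n)$ enters only to guarantee, via the distribution results for $\mathcal{B}$ in \cite{PDD}, that there are enough such dense $m$ in the relevant ranges and residue classes.

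The main obstacle is the lower bound, and within it the parity problem. It prevents taking the cofactor $r$ (or the ``top'' prime of $n$) to be a free prime variable, since that would require $mr+h$ and $r$ to be simultaneously prime; this is what forces $r$ to be a genuinely smooth-and-rough cofactor and makes the argument depend on both a nontrivial count of smooth dense numbers near $x^{1/2-\delta}$ and a linear lower-bound sieve with Bombieri--Vinogradov as its level of distribution. Keeping all of these inputs simultaneously effective---equivalently, keeping $\Omega(n)$ below $(C+\varepsilon)\log_2 n$ so that Corollary~\ref{cor2} applies and the sieve densities and error terms do not degrade---is precisely what pins the exponent at $(C+\varepsilon+1)\log(C+\varepsilon+1)-(C+\varepsilon)\log(C+\varepsilon)+1$. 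The remaining points (the upper bound, the identification of $n\le x$ with $n\in\mathcal{B}$, $n+h$ prime with primes $p\le x$, $p-h\in\mathcal{B}$, and the trivial treatment of the finitely many $p$ that survive when $\prod_{q\le\theta(1)}q\mid h$) are routine.
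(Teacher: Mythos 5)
Your plan is exactly the paper's: the upper bound is taken over unchanged from \cite[Thm.~1]{PW}, and the lower bound is obtained by rerunning the proof of \cite[Thm.~1]{PW} with the input ``$\Omega(n)<(e+\varepsilon)\log_2 n$ for almost all $n\in\mathcal{B}$'' replaced by Corollary~\ref{cor2}'s bound $\Omega(n)<(C+\varepsilon)\log_2 n$, which changes the exponent $(e+1)\log(e+1)-e\log e+1+\varepsilon=3.1647\ldots$ into $(C+1)\log(C+1)-C\log C+1+\varepsilon<3.01712$ — precisely your substitution, including your reading of why $h$ must miss some prime $\le\theta(1)$. The paper does no more than this substitution, so the additional sieve scaffolding you sketch (Bombieri--Vinogradov at modulus $x^{1/2-\delta}$, etc.) is your own reconstruction of \cite{PW} and is not needed for, nor part of, the paper's argument.
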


Similarly, in the lower bound of \cite[Thm. 3]{PW} for the count of twin practical numbers, 
which has an exponent of $2+4 e \log2 + \varepsilon = 9.53667...$, 
we can replace $e$ by $C$ to get $8.32230...$

\begin{corollary}\label{cor2PW}
Fix a nonzero integer $h$ and assume $\theta$ satisfies \eqref{thetacondcor}.
We have
\begin{equation*}
\frac{x}{(\log x)^{8.32231}}\ll_h 
\bigl|\{n\le x: n \in \mathcal{B}, n+h \in \mathcal{B}\}\bigr| 
\ll_h \frac{x}{(\log x)^2}.
\end{equation*}
For the lower bound, assume that (i) $n\in \mathcal{B}$ and $m\le 3n/|h|$ imply $mn\in \mathcal{B}$, and
(ii) if $\theta(1)<3$, then $h \in 2\mathbb{Z}$ if $\theta(2)\ge 3$, and  $h \in 4\mathbb{Z}$ if $\theta(2)< 3$.
\end{corollary}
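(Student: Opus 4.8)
The plan is to re-run the proof of \cite[Thm.~3]{PW}, changing only the input about the number of prime factors of $\mathcal{B}$-integers, which Corollary \ref{cor2} now supplies in sharper form. The upper bound $\ll_h x/(\log x)^2$ is exactly the one established in \cite[Thm.~3]{PW}: it rests on $B(y)\asymp y/\log y$ (from \cite[Thm.~1.2]{PDD}, applicable because \eqref{thetacondcor} gives $\theta(n)\ll n(\log 2n)^{o(1)}$) together with an upper-bound sieve exploiting that, if $n$ and $n+h$ both lie in $\mathcal{B}$, then each of $n$ and $n+h$ has a divisor in a short interval near $\sqrt{x}$. No prime-factor count enters here, so this half needs no modification.

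For the lower bound I would copy the construction of \cite[Thm.~3]{PW}. Fix a small $\varepsilon>0$. One builds pairs $n,\,n+h\in\mathcal{B}$ with $n\le x$ of a prescribed multiplicative shape: $n$, and likewise $n+h$, is a ``flexible'' $\mathcal{B}$-factor drawn from a dyadic range below $x^{1/2}$ times a controlled complementary part, where membership of the product in $\mathcal{B}$ is secured by hypothesis (i) ($a\in\mathcal{B}$ and $m\le 3a/|h|$ imply $am\in\mathcal{B}$) and by the growth condition $\theta(mn)\ll m^{O(1)}\theta(n)$, which keeps $\theta$ from collapsing along the complementary part; the congruence conditions that force $n+h$ into the same family are arranged by the Chinese Remainder Theorem, and hypothesis (ii) removes the small-prime (parity) obstruction that occurs when $\theta(1)<3$, e.g.\ the fact that for practical numbers $n$ and $n+h$ must both be even.

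Counting such $n\le x$ reduces, after a dyadic decomposition, to a sum over the auxiliary $\mathcal{B}$-factors in which every term has size $(\log x)^{o(1)}$ except for a factor of the form $\prod 2^{\Omega(\cdot)}$ (equivalently $\tau(\cdot)$), arising from the inclusion--exclusion/representation count attached to $n$ and $n+h$; this is precisely the quantity that produces the term $4e\log 2$ in the exponent of \cite[Thm.~3]{PW}, where it is bounded by discarding an exceptional set and using $\Omega(d)\le(e+\varepsilon)\log_2 x$, i.e.\ $2^{\Omega(d)}\le(\log x)^{(e+\varepsilon)\log 2}$, for all but $o(B(y))$ of the relevant $d$ at scale $y$. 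By Corollary \ref{cor2} — which applies since \eqref{thetacondcor} yields $\theta(n)\ll n(\log 2n)^{o(1)}$ — one may instead use $\Omega(d)\le(C+\varepsilon)\log_2 x$, hence $2^{\Omega(d)}\le(\log x)^{(C+\varepsilon)\log 2}$, for all but $o(B(y))$ of them. Propagating this through the exponent bookkeeping replaces $4e\log 2$ by $4C\log 2$, yielding $\gg_h x/(\log x)^{2+4C\log 2+\varepsilon}$; since $2+4C\log 2=8.32230\ldots$, choosing $\varepsilon$ small enough gives the claimed $\gg_h x/(\log x)^{8.32231}$.

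The one step needing genuine care is the uniformity of the exceptional-set estimate: in the construction the auxiliary factor ranges over an interval such as $(x^{c},x^{1/2}]$ rather than over all of $[1,x]$, and it is weighted by the number of admissible complementary parts, so one must check that ``all but $o(B(y))$'' remains strong enough after restricting to such a range and after this weighting. This is fine, because by Corollary \ref{cor2} the excluded set of $d$ has relative density $o(1)$ at every scale $y\to\infty$, while the weights are $(\log y)^{O(1)}$. With this in hand, the remainder of the argument is a transcription of \cite[Thm.~3]{PW} with $e$ replaced by $C$, and Corollary \ref{cor2PW} follows.
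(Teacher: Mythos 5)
Your proposal matches the paper's own (very brief) justification: the paper proves this corollary simply by observing that the lower-bound argument of \cite[Thm.~3]{PW} uses only the fact that almost all $n\in\mathcal{B}$ satisfy $\Omega(n)<(e+\varepsilon)\log_2 n$, and that Corollary \ref{cor2} (applicable under \eqref{thetacondcor}) upgrades this to $(C+\varepsilon)\log_2 n$, turning the exponent $2+4e\log 2+\varepsilon$ into $2+4C\log 2+\varepsilon=8.32230\ldots+\varepsilon$, while the upper bound is taken over unchanged. Your exponent bookkeeping and your handling of hypotheses (i), (ii) and the condition $\theta(mn)\ll m^{O(1)}\theta(n)$ agree with this, so the proposal is correct and follows essentially the same route.
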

Conditions (i) and (ii) in Corollary \ref{cor2PW} are satisfied by the practical numbers and by the $2$-dense integers 
for any nonzero even integer $h$, and by the $t$-dense integers for any nonzero integer $h$, provided $t \ge 3$. 

The $\varphi$-practical numbers \cite{PTW, Thompson} are integers $n$ such that $x^n-1$ has a divisor 
in $\mathbb{Z}[x]$ of every degree up to $n$. Although not an example of a set $\mathcal{B}_\theta$,  
they are a superset of $\mathcal{B}_{\theta_1}$ with $\theta_1(n)=n+1$, and a subset
of $\mathcal{B}_{\theta_2}$ with $\theta_2(n)=n+2$. 
Therefore, Corollaries \ref{corpra} through \ref{cor2PW} also 
apply to the $\varphi$-practical numbers, provided  $h$ is odd  in the lower bound of Corollary \ref{cor1PW},
while  $h$ is even in the lower bound of Corollary \ref{cor2PW}.

Theorem \ref{thmave} is a consequence of Theorems \ref{thmDqdv} and \ref{thmcpq}. 
Theorem  \ref{thmDqdv} gives an estimate for 
$$D_q(x)=D_q(x,t):=|\{n\in \mathcal{D}(x,t): q|n \}|,$$
when $q$ has a bounded number of prime factors. 
As in the case $q=1$ (see \cite[Thm. 1.3]{PDD}), the main term contains the function $d(v)$, which 
is defined by $d(v)=0$ for $v<0$ and 
\begin{equation}\label{dinteq}
d(v)= 1-\int_0^{\frac{v-1}{2}} \frac{d(u)}{u+1} \  w\left(\frac{v-u}{u+1}\right)  du \qquad (v\ge  0),
\end{equation}
where $w(u)$ denotes Buchstab's function. 

\begin{theorem}\label{thmDqdv}
Let $k\in \mathbb{N}\cup \{0\}$ be fixed. 
Uniformly for $x \ge 1$, $t\ge 2$, $q\in \mathbb{N}$ with $\Omega(q)=k$, $v=\log x /\log t$, we have
\begin{align}
 D_q(x,t) 
  &= x d(v) \eta_{q,t} \left\{1 + O_k\left( \frac{1}{\log xt} + \frac{\log 2q \log qt}{\log^2 xt}\right) \right\}+O(1), \label{eqDqdv0} \\ 
 D_q(x,t) & = x d(v) \eta_{q,t} \left\{1 + O_k\left( \frac{\log 2q}{\log 2x} \right) \right\},\label{eqDqdv}
\end{align}
where 
\begin{equation}\label{etaeq}
q^{-1}\ll \eta_{q,t} \ll_k q^{-1}.
\end{equation}
\end{theorem}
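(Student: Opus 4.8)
I would prove \eqref{eqDqdv0} first and deduce \eqref{eqDqdv} from it: the two estimates differ only in the error term, and for $q\le x$ one checks (using $t\ge 2$ and $\log qt\le\log xt$) that the bound in \eqref{eqDqdv} dominates the one in \eqref{eqDqdv0}, while for $q$ comparable to or larger than $x$ both statements are essentially vacuous — $D_q=0$ once $q>x$, and, since $\Omega(q)=k$ is fixed, a $t$-dense integer divisible by so large a $q$ can exist only when $t$ is large enough to keep $v=\log x/\log t$ bounded, so that $d(v)\asymp_k 1$ there. The argument for \eqref{eqDqdv0} itself parallels the treatment of the case $q=1$ in \cite[Thm.~1.3]{PDD}; the one new ingredient is a decomposition that pins down where the prime factors of $q$ lie inside $n$, and it is convenient to run alongside it an induction on $k=\Omega(q)$ whose base case is \cite[Thm.~1.3]{PDD}.

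Put $\ell=P^+(q)$. Every $n\in\mathcal D(x,t)$ with $q\mid n$ factors uniquely as $n=ab$, where $a$ is the part of $n$ supported on the primes $\le\ell$ and $b=n/a$; as all primes of $q$ are $\le\ell$, this forces $q\mid a$ and $P^+(a)=\ell$. Unwinding \eqref{Bdef} for $\theta(m)=mt$ shows that $n\in\mathcal D(x,t)$ exactly when $a\in\mathcal D(x,t)$ and either $b=1$, or $P^-(b)>\ell$, $P^-(b)\le ta$, and the concatenation of the factorizations of $a$ and of $b$ still obeys \eqref{Bdef} — a condition I abbreviate `$b$ is $t$-dense over $a$'. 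Hence
\[
D_q(x,t)=\sum_{\substack{a\in\mathcal D(x,t),\ q\mid a\\ P^+(a)=\ell}}N(a,x/a,t),\qquad N(a,Y,t):=\#\bigl\{b\le Y:\ b=1\ \text{or}\ \bigl(P^-(b)>\ell,\ b\ \text{is}\ t\text{-dense over}\ a\bigr)\bigr\}.
\]
The sum runs over $\ell$-smooth $a$, which is what makes it converge.

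The core step is an asymptotic for $N(a,Y,t)$, uniform in $a$: this is precisely the $q=1$ analysis of \cite{PDD} carried out with a fixed `head' $a$. Decomposing $b$ by its largest prime factor, under the side constraint $P^-(b)\le ta$, yields the same Buchstab/renewal recursion as in \cite{PDD} with $a$ now an inert parameter, so its solution is again governed by the function $d$ of \eqref{dinteq}; the head $a$ merely advances the `clock' by $\log a/\log t$, so with $Y=x/a$ the relevant argument of $d$ is $\log(aY)/\log t=v$ for every admissible $a$, and $d(v)$ — with the same $v=\log x/\log t$ as when $q=1$ — factors out of the sum. Collecting terms I expect
\[
D_q(x,t)=x\,d(v)\,\eta_{q,t}\bigl\{1+(\text{error reassembled from those for }N)\bigr\}+O(1),
\]
with $\eta_{q,t}$ an absolutely convergent sum over the admissible heads $a$; heuristically $\eta_{q,t}$ is the portion of the $q=1$ constant $\eta_{1,t}$ accounted for by heads divisible by $q$ with $P^+(a)=\ell$.

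It remains to establish \eqref{etaeq}. For the lower bound $\eta_{q,t}\gg q^{-1}$ I would exhibit $\gg x/q$ members of $\mathcal D(x,t)$ divisible by $q$ — attach to $q$ a minimal $t$-dense head (needed only when $P^+(q)>t$) and then any $t$-dense continuation above $P^+(q)$ — which already contributes a fixed positive multiple of $q^{-1}$ to the sum. The upper bound $\eta_{q,t}\ll_k q^{-1}$ is the delicate point, and here the induction on $k$ earns its keep: the crude estimate $\sum_{P^+(a)\le\ell}a^{-1}\asymp\log\ell$ is far too lossy, so one uses that each admissible head is not merely $\ell$-smooth but a $t$-dense multiple of $q$ with $P^+(a)=P^+(q)$, which constrains it enough that the sum is $\ll_k q^{-1}$, the implied constant growing with $k$ but independent of $q$ and $t$ — this is the source of the $O_k$ throughout. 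The main obstacle, as is typical in this circle of ideas, is making the head-start estimate for $N(a,Y,t)$ genuinely uniform in $a$ — especially when $q$, and hence the forced head, is large relative to $x$ — and then checking that all the accumulated errors collapse to the clean bounds stated in \eqref{eqDqdv0}.
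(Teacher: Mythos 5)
Your overall architecture (induction on $k=\Omega(q)$ with \cite[Thm.~1.3]{PDD} as base case, reduction to the $q=1$ renewal machinery, separate treatment of the constants) resembles the paper's, but the step that carries all the weight is missing. Your decomposition $n=ab$ into the $P^+(q)$-smooth head and a rough tail requires an asymptotic for $N(a,Y,t)$ that is uniform in the head $a$, and you assert that this count is governed by $d$ at the same argument $v=\log x/\log t$, so that $d(v)$ factors out of the head sum. That is precisely the hard content of the theorem, and it is not justified: for an individual head the tail is constrained both by the roughness condition $P^-(b)>P^+(q)$ and by the starting threshold $ta$, so the per-head count is a two-parameter quantity in which Buchstab's function intervenes (compare Lemma \ref{PhiLemma}); the function $d$ of \eqref{dinteq} emerges only after the heads are summed and the resulting renewal (integral) equation is solved. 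In effect you are assuming a uniform two-parameter generalization of \cite[Thm.~1.3]{PDD} at least as strong as \eqref{eqDqdv0} itself. The paper avoids per-head asymptotics altogether: it counts all integer multiples of $q$ via the unique factorization into a $\mathcal{B}$-head and a rough tail, obtaining the exact identities \eqref{phik} and \eqref{lambdak}; combined with Lemma \ref{PhiLemma} these turn $D_q$ itself into the same integral equation as in the case $q=1$, which Lemma \ref{Laplace} solves, the induction adding one prime $r\ge P^+(q)$ at a time through the auxiliary estimate \eqref{indhyp} (and even making a second pass to sharpen the error).

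Second, \eqref{etaeq} is not established. For the upper bound you say the structure of the heads ``constrains it enough that the sum is $\ll_k q^{-1}$,'' but no mechanism is offered; the obvious route $D_q(x,t)\le D(x/q,qt)$ loses a factor $\log qt/\log t$, and the paper needs a genuinely separate argument (Lemma \ref{cUBlem}) based on the Dirichlet-series identities \eqref{lambdak} and \eqref{muk}, the limit lemmas of \cite{CFAE}, and $\mu_n\ll\log t$, interleaved with the induction because it uses the freshly proved bound \eqref{Dupper}. Your lower-bound sketch is quantitatively wrong as stated: there are not $\gg x/q$ members of $\mathcal{D}(x,t)$ divisible by $q$ (the true order is $x\log t/(q\log xt)$); what is needed, and what the paper uses, is $D(x/q,t)-D(q/t,t)\le D_q(x,t)$ from Lemma \ref{DqUB}. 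Finally, passing from \eqref{eqDqdv0} to \eqref{eqDqdv} is not merely a comparison of relative error terms: the summand $O(1)$ must be absorbed into $x\,d(v)\,\eta_{q,t}\,\log 2q/\log 2x$, which requires knowing that whenever $D_q(x,t)$ can be nonzero one has $x\ge\max(q,F(q)/t)$, and hence either $t\ge x^{2^{-k}}$ or $q\le(xt)^{1-2^{-k}}$ (Lemmas \ref{lemDzero}, \ref{lemprodUB} and \ref{1lem}); your remark about $q$ comparable to $x$ gestures at this, but the quantitative dichotomy is what actually does the work.
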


\begin{corollary}\label{corDqdv}
Let $k\in \mathbb{N}\cup \{0\}$ be fixed. 
Uniformly for $x \ge 1$, $t\ge 2$, $q\in \mathbb{N}$ with $\Omega(q)=k$, we have
\begin{equation}\label{eqcorDqdv}
 D_q(x,t) = \frac{c_{q} x}{\log xt} \left\{1 + O_k\left( \frac{1}{\log xt} + \frac{ \log^2 qt}{\log^2 xt}\right) \right\} +O(1),
\end{equation}
where 
\begin{equation}\label{cqeq}
c_q=c_{q,t} = C \eta_{q,t} \log t ,\quad  q^{-1}\log t \ll c_{q} \ll_k q^{-1}\log t, 
\end{equation}
and 
\begin{equation}\label{eqcorDqdv2}
 D_q(x,t) \ll_k \frac{ x \log t}{q\log xt}.
\end{equation}
\end{corollary}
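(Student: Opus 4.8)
The plan is to derive Corollary \ref{corDqdv} from Theorem \ref{thmDqdv} by eliminating the variable $v=\log x/\log t$ in favour of the size parameters alone. The first step is to record the relevant analytic facts about $d(v)$: it is known (from the theory surrounding \eqref{dinteq}; see \cite{PDD}) that $d(v)$ is continuous, nonnegative, vanishes for $v<0$, and satisfies $d(v)\to e^{\gamma}$ as $v\to\infty$, together with $d(v)\asymp 1$ for $v\gtrsim 1$ and the crude bound $d(v)\ll 1$ everywhere. More precisely, one has $d(v)=e^{\gamma}+O(\rho(v-1))$ or a similar decay estimate, but for the present purpose the cleaner input is $\int_0^\infty$-type normalization together with the local behaviour $d(v)\log t = C\log t \cdot d(v)/C$; what actually matters is the identity that, when $v=\log x/\log t$,
\begin{equation*}
d(v)\log t = \frac{C\log t}{\log xt}\bigl(1+O(1/v)\bigr)\cdot\frac{\log xt}{\log t}\cdot\frac{d(v)}{C}\,,
\end{equation*}
so I should instead invoke the sharper known asymptotic $d(v) = \dfrac{C\log t}{\log xt}\cdot\dfrac{\log xt}{\log t}$-free version, namely that $\log(xt)\,d(v)/C \to \log xt/\log t$ fails — let me restate this properly in the writeup.

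The cleaner route: combine \eqref{eqDqdv0} with the asymptotic for $d(v)$. Set $c_q = c_{q,t} := C\eta_{q,t}\log t$; then \eqref{cqeq} is immediate from \eqref{etaeq}. So the content of \eqref{eqcorDqdv} is that $x\,d(v)\,\eta_{q,t} = \dfrac{c_q x}{\log xt}\{1+O_k(\cdots)\}$, i.e. that
\begin{equation*}
d(v) = \frac{C\log t}{\log xt}\left\{1+O\left(\frac{1}{\log xt}+\frac{\log^2 qt}{\log^2 xt}\right)\right\}.
\end{equation*}
But $d(v)$ depends only on $v=\log x/\log t$, not on $q$, so the $q$-dependent error term here is not the real point — it is absorbed because $\log^2 qt/\log^2 xt$ dominates $\log 2q\log qt/\log^2 xt$ from \eqref{eqDqdv0}. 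Thus the key estimate needed is purely about Buchstab-type functions:
\begin{equation}\label{plan-dv}
d(v) = \frac{C}{v}\left\{1+O\left(\frac{1}{v}\right)\right\}\qquad (v\ge 1),
\end{equation}
equivalently $v\,d(v) = C + O(1/v)$ with $C = (1-e^{-\gamma})^{-1} = e^{\gamma}/(e^{\gamma}-1)$, hmm — I need to double check the constant; since $D(x,t)=x\,d(v)\cdot 1\cdot\{1+\cdots\}$ for $q=1$ and $D(x,t)\sim c_1 x/\log xt$ with $c_1 = C\log t$ would force $d(v)\sim C\log t/\log xt = C/v$, so \eqref{plan-dv} with constant exactly $C$ is what I want, and this is consistent with \cite[Thm. 1.2, Thm. 1.3]{PDD}. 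So \eqref{plan-dv} should be quoted from \cite{PDD} (it is essentially the statement that $d(v)\log x/\log t \to C$, coming from $d(v)$'s delay-differential structure) or proved in a lemma from \eqref{dinteq} via the substitution relating $d$ to the count $D(x,t)$ at $t$ fixed.

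With \eqref{plan-dv} in hand the corollary follows by routine manipulation: plug $d(v) = (C/v)(1+O(1/v))$ and $v = \log x/\log t$ into \eqref{eqDqdv0}, giving $x\,d(v)\,\eta_{q,t} = (Cx\log t/\log x)\eta_{q,t}(1+O(\log t/\log x))$; then convert $1/\log x$ to $1/\log xt$ at the cost of a factor $1+O(\log t/\log xt)$, which is $\le 1+O(1/\log xt)\cdot(\log t)$ — wait, that is not small when $t$ is large compared to $x$. Here I must be careful about the range: the stated error $1/\log xt + \log^2 qt/\log^2 xt$ is genuinely small only when $x$ is not too small relative to $t$; when $v$ is bounded, $d(v)$ is bounded away from its asymptotic and the $O(1)$ term in \eqref{eqDqdv0} together with the trivial bound $D_q(x,t)\le x$ handles everything, since then $c_q x/\log xt \asymp x\log t/(q\log xt)\asymp x/q \cdot(\log t/\log xt)$ and the claimed relative error exceeds $1$. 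So the proof splits: \textbf{(a)} the range $\log x \ge (\log t)(\log qt)$ or so, where \eqref{plan-dv} and \eqref{eqDqdv0} combine cleanly and the $\log t \leftrightarrow \log xt$ conversion is harmless because $\log t/\log xt \le 1/\log qt \ll$ the allowed error; \textbf{(b)} the complementary range, where both sides are $O(x/q)$-ish and the error term in \eqref{eqcorDqdv} is $\gg 1$, making the estimate trivially true (using \eqref{etaeq}, $d(v)\ll 1$, and $D_q(x,t)\ll x/q$). Finally \eqref{eqcorDqdv2} follows from \eqref{eqDqdv} together with $d(v)\ll 1$ and \eqref{etaeq}, with the $O(1)$ absorbed since $x\log t/(q\log xt)\gg 1/q\cdot(\log t/\log xt)$ is not always $\gg 1$ — so for \eqref{eqcorDqdv2} I should instead use the trivial bound $D_q(x,t)\le x/q\cdot$(something) directly when $x$ is small, or note $D_q(x,t)\ll x/q$ always and $x\log t/(q\log xt)\asymp x/q$ when $\log x\asymp\log t$.

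The main obstacle is \textbf{not} the algebra but pinning down and justifying the asymptotic \eqref{plan-dv} for $d(v)$ with the correct constant $C$ and an explicit $O(1/v)$ error, and then tracking the case division so that the $\log t$ versus $\log xt$ discrepancy in the denominator is always covered by the stated error term. I expect \eqref{plan-dv} to be available from \cite{PDD} (it underlies the $q=1$ case of the very theorem we are using), so the writeup would cite it as a lemma and then spend its energy on the two-range argument and on checking that the error term $\log^2 qt/\log^2 xt$ in \eqref{eqcorDqdv} correctly dominates the combination of the $d(v)$-error $1/v = \log t/\log x$ and the Theorem \ref{thmDqdv} error $\log 2q\log qt/\log^2 xt$ after the denominator conversion.
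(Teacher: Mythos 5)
There is a genuine gap, and it lies exactly where you located the ``main obstacle'': the quality of the asymptotic for $d(v)$. Your proposed input \eqref{plan-dv}, $d(v)=\frac{C}{v}\{1+O(1/v)\}$, is too weak to produce the error term $\frac{1}{\log xt}+\frac{\log^2 qt}{\log^2 xt}$ in \eqref{eqcorDqdv}. Plugging it into \eqref{eqDqdv0} and converting $\log x$ into $\log xt$ in the denominator leaves a relative error of size $O(\log t/\log x)$, and this is \emph{not} dominated by the claimed error: for $q=1$ and, say, $t=\exp(\sqrt{\log x})$ one has $\log t/\log x\asymp(\log x)^{-1/2}$ while $\frac{1}{\log xt}+\frac{\log^2 t}{\log^2 xt}\asymp(\log x)^{-1}$. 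Your case (a) claim that the conversion is harmless because $\log t/\log xt\le 1/\log qt\ll$ the allowed error is also false as stated: for fixed $q$ and $t$ the quantity $1/\log qt$ is a constant, whereas the allowed error tends to $0$ as $x\to\infty$. The paper avoids all of this by quoting the sharper estimate $d(v)=C(v+1)^{-1}\{1+O((v+1)^{-2})\}$ from \cite[Thm. 1]{IDD3} (not \cite{PDD}). Since $v+1=\log xt/\log t$, this gives $x\,d(v)\,\eta_{q,t}=\frac{c_qx}{\log xt}\{1+O(\log^2 t/\log^2 xt)\}$ with no denominator conversion at all, the quadratic error $\log^2 t/\log^2 xt$ is absorbed into $\log^2 qt/\log^2 xt$, and the estimate is uniform in all ranges, so no two-range splitting is needed. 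To repair your argument you must upgrade \eqref{plan-dv} to a second-order form $(v+1)d(v)=C+O((v+1)^{-2})$; with only a first-order error the corollary's stated precision (which is used later, e.g.\ in the proof of Theorem \ref{thmcpq}) cannot be reached.

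Two smaller points. The assertion $d(v)\to e^{\gamma}$ is wrong ($d(v)\to 0$ like $C/v$; it is Buchstab's $w(u)$ that tends to $e^{-\gamma}$), though you later self-correct. For \eqref{eqcorDqdv2}, the clean route is the paper's: apply \eqref{eqDqdv} (which has a purely multiplicative error $1+O_k(\log 2q/\log 2x)$ and no additive $O(1)$), note $D_q(x,t)=0$ when $q>x$ so that $\log 2q\ll\log 2x$ may be assumed, and use $d(v)\ll\log t/\log xt$ together with \eqref{etaeq}; your bound $d(v)\ll 1$ only yields $D_q\ll_k x/q$, which is weaker than \eqref{eqcorDqdv2}. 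The identification $c_q=C\eta_{q,t}\log t$ and \eqref{cqeq} from \eqref{etaeq} is fine and matches the paper.
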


The estimates \eqref{eqcorDqdv} and \eqref{cqeq} follow from \eqref{eqDqdv0} and \eqref{etaeq},
since
$
d(v) =C(v+1)^{-1}\left\{1+O\left((v+1)^{-2}\right)\right\}
$
by \cite[Thm. 1]{IDD3}. 
The upper bound \eqref{eqcorDqdv2} follows from \eqref{eqDqdv}, because $D_q(x,t)=0$ if $q>x$. 

Theorem \ref{thmcpq} gives estimates for $c_q$ when $q$ is a prime or a product of two primes. 
These estimates are needed to derive Theorem \ref{thmave} from Theorem \ref{thmDqdv}.

\begin{theorem}\label{thmcpq}
Let  $p\le q$ be primes. The constant factor in \eqref{eqcorDqdv} satisfies
\begin{equation}\label{c1tasymp}
c_\theta := c_1 = C (\log t -\gamma) +  O\left(e^{-\sqrt{\log t}}\right),
\end{equation}
\begin{equation}\label{cqtasymp1}
q c_q = C c_\theta
\left\{1+O\left(\frac{1}{\log q}+\frac{\log^2 t}{\log^2 q}\right)\right\}
\end{equation}
\begin{equation}\label{cqtasymp2}
q c_q = c_\theta + C\log q  + O\left(\exp\left(-\sqrt{\log t}\right)\right)\quad (q\le t),
\end{equation}
\begin{equation}\label{cpqtasymp1}
pq c_{pq} = C^2 c_\theta \left\{1+O\left(\frac{1}{\log p}+\frac{\log^2 t}{\log^2 p}
+ \frac{\log^2 p}{\log^2 q}\right)\right\},
\end{equation}
\begin{equation}\label{cpqtasymp2}
pq c_{pq} = \left(C c_\theta + C^2 \log p\right)
\left\{1+O\left(\frac{1}{\log q}+\frac{\log^2 t}{\log^2 q}
+e^{-\sqrt{\log t}}\right)\right\}
\quad (p\le t),
\end{equation}
\begin{equation}\label{cpqtasymp3}
pq c_{pq} =  c_\theta + C \log pq + O\left(e^{-\sqrt{\log t}}\right)\quad (p\le q\le t).
\end{equation}
\end{theorem}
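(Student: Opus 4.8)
The plan is to derive Theorem \ref{thmcpq} from Corollary \ref{corDqdv} together with a multiplicative structure for the constants $\eta_{q,t}$. The key observation is that when $q = p_1^{a_1}\cdots p_r^{a_r}$, the integers $n \in \mathcal{D}(x,t)$ with $q \mid n$ are those of the form $n = q_0 \cdot m$ where $q_0$ is built from the primes of $q$ lying below some cutoff and $m$ is $t$-dense with all prime factors exceeding that cutoff; this gives a factorization of $\eta_{q,t}$ and hence of $c_q$. The first step is therefore to establish an explicit formula (or a functional relation) for $\eta_{q,t}$ in terms of Buchstab-type densities, most naturally via the generating/Dirichlet-series machinery that underlies \cite{PDD} and the definition \eqref{dinteq} of $d(v)$. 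Once $c_q = C\eta_{q,t}\log t$ is expressed through such a relation, the estimate \eqref{c1tasymp} for $c_\theta=c_1$ should follow from the known asymptotics of $d(v)$ (the constant $C(\log t - \gamma)$ is exactly what one expects: $c_1$ is essentially $\log t$ times the density of $t$-dense integers near $x$, and the $-\gamma$ reflects the Buchstab/Mertens correction, with the error $O(e^{-\sqrt{\log t}})$ coming from the standard error term for $w(u) - e^{-\gamma}$).

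Next I would treat the single-prime case. For a prime $q$, split according to whether $q \le t$ or $q > t$. When $q \le t$, the prime $q$ is "small" and an integer divisible by $q$ and $t$-dense decomposes as (power of $q$ and smaller primes) times ($t$-dense part with primes $> q$); carrying out the Mertens-type sum over the prime $q$ and the primes below it produces the additive term $C\log q$, giving \eqref{cqtasymp2}: $qc_q = c_\theta + C\log q + O(e^{-\sqrt{\log t}})$. When $q$ is large compared to $t$ the decomposition is cruder and one only gets the multiplicative statement \eqref{cqtasymp1}, $qc_q = Cc_\theta\{1 + O(1/\log q + \log^2 t/\log^2 q)\}$; here the error terms are exactly the ones propagated from \eqref{eqcorDqdv} with $\Omega(q)=1$, after bounding $\log^2 qt \ll \log^2 q$ in that regime. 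The consistency of \eqref{cqtasymp1} and \eqref{cqtasymp2} when $t \le q$ but $\log q \asymp \log t$ is a sanity check: both reduce to $qc_q \asymp c_\theta \log t \asymp \log^2 t$.

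For the two-prime case $pq c_{pq}$ with $p \le q$, the strategy is iteration: apply the single-prime decomposition first to the larger prime $q$ to reduce to a constant of the shape $c_{p}$ (for the $t'$-dense integers with a shifted parameter, or more precisely to $\eta_{p,t}$ twisted by a factor accounting for $q$), then apply it again to $p$. In the fully small regime $p \le q \le t$, iterating \eqref{cqtasymp2} gives the clean additive formula \eqref{cpqtasymp3}, $pqc_{pq} = c_\theta + C\log pq + O(e^{-\sqrt{\log t}})$. In the intermediate regime $p \le t < q$ one applies the multiplicative estimate to $q$ and the additive one to $p$, yielding \eqref{cpqtasymp2}; and when even $p > t$, only the doubly-multiplicative \eqref{cpqtasymp1} survives, with the extra error term $\log^2 p/\log^2 q$ arising because the reduction over $q$ leaves behind a constant whose own error is governed by $\log^2(\text{stuff})/\log^2 q$ and that "stuff" now involves $p$.

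The main obstacle I anticipate is \textbf{not} the bookkeeping of cases but establishing the precise multiplicative/additive decomposition of $\eta_{q,t}$ (equivalently $c_q$) with an error term strong enough to survive — in particular, getting the $e^{-\sqrt{\log t}}$ saving in \eqref{c1tasymp}, \eqref{cqtasymp2}, \eqref{cpqtasymp2}, \eqref{cpqtasymp3} rather than a mere power of $\log t$. This requires going back to the integral-equation/Dirichlet-series description behind Theorem \ref{thmDqdv} and isolating the contribution of the prime(s) dividing $q$ exactly, using the quantitative form of Mertens' theorem ($\prod_{p \le y}(1-1/p)^{-1} = e^\gamma \log y (1 + O(e^{-c\sqrt{\log y}}))$, or its Buchstab-function counterpart) to extract $\log q$ and $-\gamma$ with the stated precision. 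The secondary difficulty is verifying uniformity: the six estimates must hold uniformly in $p, q, t$ subject only to the indicated inequalities, so the transitions between regimes ($q \approx t$, $p \approx t$, $p \approx q$) must be handled without losing the error terms, which is why the statements deliberately overlap.
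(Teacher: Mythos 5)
Your plan gets the shape of the answer right (additive behaviour for small primes, multiplicative for large ones, iteration over the larger prime first), but the step you yourself flag as the ``main obstacle'' --- an exact multiplicative/additive decomposition of $\eta_{q,t}$ with an $e^{-\sqrt{\log t}}$ error --- is precisely the content of the proof, and your sketch does not supply it. Worse, the decomposition you propose is not correct as the basis for such a formula: writing a $t$-dense $n$ with $q\mid n$ as a small part containing the primes of $q$ times a part with larger prime factors only gives an inclusion (it is what underlies the bound $D_q(x,t)\le D(x/q,qt)$ in Lemma \ref{DqUB}), and that loses exactly the precision needed; it cannot produce $qc_q=c_\theta+C\log q+O(e^{-\sqrt{\log t}})$.

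The mechanism the paper uses is different: every multiple of $q_1\cdots q_k$ factors \emph{uniquely} as $nr$ with $n\in\mathcal{B}$ and $P^-(r)>\theta(n)$, which yields the exact identities \eqref{lambdak} and, after differentiating in $s$, \eqref{muk}. Taking $k=1$ (resp.\ $k=2$), letting $s=1+1/\log^2 N$ and $N\to\infty$, and invoking Lemmas 3 and 4 of \cite{CFAE} together with Corollary \ref{corDqdv} turns \eqref{muk} into the exact relations \eqref{cqS} and \eqref{cpqT}, linking $qc_q$ to $c_\theta$ and $qc_{pq}$ to $c_p$ through the sums $S=\sum_{n\ge q/t}\lambda_n$ and $T=\sum_{n\ge q/t,\,p|n}\lambda_n$. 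The terms $C\log q$ and the $e^{-\sqrt{\log t}}$ errors come from $\mu_n=\log t-\gamma+O(e^{-\sqrt{\log nt}})$ (\eqref{muapprox}), and in the ranges $q\le t$, $p\le q\le t$ one has $S=1$ and $T=1/p$ exactly, by \eqref{lambda0} and \eqref{lambdak}. In the remaining ranges $S$ and $T$ are evaluated by Abel summation against Corollary \ref{corDqdv} with modulus $1$ (resp.\ $p$), which is where the errors $1/\log q+\log^2 t/\log^2 q$ and $\log^2 p/\log^2 q$ actually originate --- not, as you suggest, from the error term of \eqref{eqcorDqdv} at modulus $q$ itself. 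Also, \eqref{c1tasymp} is not deduced from the asymptotics of $d(v)$; it is quoted from \cite{CFAE}. Without the identity machinery of Section \ref{secmult} (or a substitute of equal strength), your iteration only gives order-of-magnitude information, so the proposal has a genuine gap at its central step.
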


In Section \ref{seccorpra} we derive Corollaries \ref{corpra}, \ref{cor2} and \ref{cortau} from Theorem \ref{thmave}.
Section \ref{secmult} contains several lemmas, used in the proofs of Theorems  \ref{thmDqdv} and \ref{thmcpq}, 
about members of $\mathcal{B}$ that are multiples of  a natural number $q$. 
The proof of Theorem \ref{thmDqdv} is given in Section \ref{secpot2}.
In Section \ref{secpot3} we establish Theorem \ref{thmcpq} with the help of Corollary \ref{corDqdv}, which is a consequence of Theorem \ref{thmDqdv}.
Finally, in Section \ref{secpot1} we apply Theorems  \ref{thmDqdv} and \ref{thmcpq} to prove Theorem \ref{thmave}.

\section{Proof of Corollaries \ref{corpra}, \ref{cor2} and \ref{cortau}}\label{seccorpra}

\begin{lemma}\label{DqUB}
We have
\begin{equation*}
D(x,t) \ll \frac{x\log t}{\log xt} \qquad (x>1/t, t\ge 2),
\end{equation*}
\begin{equation*}
D(x,t) \gg \frac{x\log t}{\log xt} \qquad (x\ge 1, t\ge 2),
\end{equation*}
$$
 D(x/q,t) - D(q/t,t) \le D_q(x,t) \le D(x/q,qt)  \qquad (x\ge 0, t\ge 2, q\ge 1).
 $$ 
\end{lemma}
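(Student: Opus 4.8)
The plan is to reduce everything to the standard interval characterization of $t$-density: a positive integer $n$ is $t$-dense if and only if for every real $y\in[1,n]$ the integer $n$ has a divisor in $[y/t,y]$. This is elementary --- if the ratios of consecutive divisors of $n$ are $\le t$, then the largest divisor of $n$ not exceeding $y$ already lies in $[y/t,y]$; conversely, applying the condition to a value of $y$ slightly below a divisor $d_{i+1}$ of $n$ produces a divisor of $n$ that is necessarily $\le d_i$, which forces $t d_i\ge d_{i+1}$. I will use this characterization freely, together with the trivial fact that every integer $\le x$ is $t$-dense when $x<t$, so that $D(x,t)=\lfloor x\rfloor$ in that range.

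The first two displays are not the obstacle. The estimates $D(x,t)\ll x\log t/\log xt$ (for $x>1/t$) and $D(x,t)\gg x\log t/\log xt$ (for $x\ge1$) are classical facts from the theory of integers with dense divisors \cite{Ten86,Saias1,PDD}; alternatively they fall out of Theorem \ref{thmDqdv} with $q=1$, using $\eta_{1,t}\asymp1$ and $d(v)\asymp(v+1)^{-1}=\log t/\log xt$, once the ranges $x<1$ (both sides are $O(1)$ or zero) and $1\le x<t$ (use $D(x,t)=\lfloor x\rfloor$ and $x/2<x\log t/\log xt\le x$) are handled directly. The real work is the two-sided inclusion in the third display.

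For the upper inequality I would write $D_q(x,t)=\#\{m\le x/q:\ qm\text{ is }t\text{-dense}\}$ and show that $qm$ being $t$-dense implies $m$ is $qt$-dense. Fix $y\in[1,m]$; then $y\le m\le qm$, so the $t$-density of $qm$ yields a divisor $f\mid qm$ with $y/t\le f\le y$. Put $e=\gcd(f,m)$, a divisor of $m$. Since $f\mid qm$ gives $v_p(f)\le v_p(q)+v_p(m)$ for every prime $p$, the quotient $f/e=\prod_p p^{\max(0,\,v_p(f)-v_p(m))}$ divides $q$, so $e\ge f/q\ge y/(qt)$, while $e\le f\le y$. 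Thus $e$ is a divisor of $m$ in $[y/(qt),y]$, and since $y\in[1,m]$ was arbitrary, $m$ is $qt$-dense. Hence $D_q(x,t)\le D(x/q,qt)$.

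For the lower inequality I would prove that if $m$ is $t$-dense and $m\ge q/t$, then $qm$ is $t$-dense. Granting this, $m\mapsto qm$ injects $\{m\le x/q:\ m\text{ is }t\text{-dense},\ m\ge q/t\}$ into the set counted by $D_q(x,t)$, and the cardinality of the former equals $D(x/q,t)-\#\{m<q/t:\ m\text{ is }t\text{-dense}\}\ge D(x/q,t)-D(q/t,t)$ (when $q/t>x/q$ the asserted inequality is vacuous, its right side being $\le0$). To prove the implication, take $z\in[1,qm]$ and exhibit a divisor of $qm$ in $[z/t,z]$: if $z\le m$, a divisor of $m$ in $[z/t,z]$ works and divides $qm$; if $z\ge q$, then $z/q\in[1,m]$, so $q$ times a divisor of $m$ in $[z/(qt),z/q]$ works; and if $m<z<q$ --- which via $m\ge q/t$ forces $z<q\le tm$ and hence $z/t\le m<z$ --- then $m$ itself works. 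These three ranges exhaust $[1,qm]$ (if $m\ge q$ the middle range is empty and $z>m$ already gives $z\ge q$), so $qm$ is $t$-dense. The only delicate points, and where I expect to spend attention, are calibrating the interval widths so that exactly the parameter $qt$ (resp.\ $t$) and exactly the side condition $m\ge q/t$ emerge, verifying $f/e\mid q$, and checking that the three $z$-ranges cover $[1,qm]$; everything else is bookkeeping. Both inclusions can also be obtained, more computationally, by peeling the prime powers of $q$ off $n$ one at a time and checking the recursive conditions \eqref{Bdef} with $\theta(n)=nt$ at each step, but the interval characterization gives the shortest route.
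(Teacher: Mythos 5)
Your argument is correct and matches the paper's proof in essence: the first two bounds are taken from the literature (the paper cites Saias, Th.~1), and the third display rests on exactly the two implications the paper states --- $m\in\mathcal{D}(x/q,t)$ with $m>q/t$ implies $qm\in\mathcal{D}_q(x,t)$, and $n=qm\in\mathcal{D}_q(x,t)$ implies $m\in\mathcal{D}(x/q,qt)$ --- which you simply verify in full detail via the divisor-in-$[y/t,y]$ characterization of $t$-dense integers. The only caveat is your parenthetical alternative of deducing the first two bounds from Theorem~\ref{thmDqdv} with $q=1$: within this paper that theorem is proved using Lemma~\ref{DqUB} (its $q=1$ base case being imported from earlier work), so the direct citation you give first is the right choice.
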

\begin{proof}
The first two estimates follow from \cite[Thm. 1]{Saias1}.

If $m\in \mathcal{D}(x/q,t)$ and $m>q/t$, then $mq \in \mathcal{D}_q(x,t)$. This shows that $D(x/q,t) - D(q/t,t) \le D_q(x,t)$.

If $n\in \mathcal{D}(x,t)$ and $q|n$, we write $n=qm$ and observe that $m \in \mathcal{D}(x/q,qt)$. 
Thus, $D_q(x,t) \le D(x/q,qt)$.
\end{proof}

\begin{proof}[Proof of Corollary \ref{corpra}]
We first show \eqref{cor1norm}.
For $1\le n \le x$, we have $\max(2,n)\le \theta(n)\le n f(n) \le n f(x)$.
Thus, $\mathcal{B}(x) \subset \mathcal{D}(x,f(x))$. If $f(x) \le x$, \eqref{eqnorm} yields
$$
\left| \left\{ n \in \mathcal{B}(x) : |\omega(n)-E(x,f(x))| > \frac{\xi(x)}{2} \sqrt{\log_2 x}\right\} \right|
\ll \frac{D(x,f(x))}{\xi(x)^2}.
$$
The assumption $\theta(n)\ge \max(2,n)$ implies $B(x) \gg x/\log x$, by \cite[Thm. 1.2]{PDD}.
By Lemma \ref{DqUB},
$$
\frac{D(x,f(x))}{\xi(x)^2} \ll \frac{x \log f(x)}{\xi(x)^2 \log x} \ll B(x) \frac{ \log f(x)}{\xi(x)^2}. 
 $$ 
 The result being trivial if $\log f(x) > \xi(x)^2$, we may assume $\log f(x) \le \xi(x)^2$, so that 
$$
|  E(x,f(x)) - C\log_2 x | = |(C-1)\log_2 f(x)| \le \frac{\xi(x)}{2} \sqrt{\log_2 x},
$$
for $x\ge x_0$. Thus, \eqref{cor1norm} holds if $f(x)\le x$. 
If $f(x)>x$, then $\xi(x)^2 \ge \log f(x) >\log x$, so that $|\omega(n)-C\log_2 x|>\xi(x) \sqrt{\log_2 x}$
implies $\omega(n)> \xi(x) > \sqrt{\log x}$. 
The result now follows from  Nicolas' Theorem \cite{Nic}, an asymptotic estimate for the quantity
$|\{n\le x: \Omega(n)=k\}|$, which easily implies 
\begin{equation}\label{Niceq}
|\{n\le x: \Omega(n)\ge y \log_2 x\}| \ll  \frac{x}{(\log x)^{y \log 2 -1}},
\end{equation}
uniformly for $x\ge 2$ and $y \ge 2+\delta$, for any fixed $\delta>0$. 

Next, we show that  \eqref{cor1norm} implies \eqref{cor1ave}.
From \eqref{Niceq} we have
\begin{equation}\label{largeomega}
|\{n\le x: \omega(n)\ge 6 \log_2 x\}|\le |\{n\le x: \Omega(n)\ge 6 \log_2 x\}| 
\ll \frac{B(x)}{\log^2 x}.
\end{equation}
Since $\omega(n)\le \Omega(n) \ll \log n$, the contribution to $\sum_{n\in \mathcal{B}(x)} \omega(n)$
from $n$ with $\omega(n)>6 \log_2 x$ is $\ll B(x)/\log x$, while 
the contribution from $n$ with $\omega(n)\le 6 \log_2 x$ and $|\omega(n)-C\log_2 x|>\xi(x) \sqrt{\log_2 x}$ is 
$$
\ll (6\log_2 x)  B(x)\frac{\log f(x)}{\xi(x)^2},
$$
by \eqref{cor1norm}. The contribution to $\sum_{n\in \mathcal{B}(x)} \omega(n)$ from $n$ with $\omega(n)\le 6 \log_2 x$ and $|\omega(n)-C\log_2 x|\le \xi(x) \sqrt{\log_2 x}$ is
$$
B(x)\left(1+O\left(\frac{\log f(x)}{\xi(x)^2} + \frac{1}{\log^2 x} \right)\right) 
C\log_2 x \left(1+O\left(\frac{\xi(x)}{\sqrt{\log_2 x}}\right)\right).
$$
If $\log f(x) \le \log_2 x$, \eqref{cor1ave} now follows with $\xi(x)=(\log f(x))^{1/3} (\log_2 x)^{1/6}$.
If $\log f(x) > \log_2 x$, \eqref{cor1ave} follows directly from \eqref{largeomega}. 
The argument works the same with $\Omega(n)$ in place of $\omega(n)$. 
\end{proof}

\begin{proof}[Proof of Corollary \ref{cor2}]
Assume $\max(2,n) \le \theta(n) \ll n (\log 2n)^{o(1)}$. Define $f(x) = \max_{n\le x} \theta(n)/n$, so that 
$f$ is non-decreasing and $f(x)=(\log x)^{o(1)}$ as $x\to \infty$, that is $\log f(x) = o(\log_2 x)$. 
The relation \eqref{cor2ave} follows from \eqref{cor1ave}. 
Choosing $\xi(x)=(\log f(x) \log_2 x)^{1/4}$ in  \eqref{cor1norm}  yields  \eqref{cor2norm}. 
\end{proof}

\begin{lemma}\label{lemOmTau}
Let $\varepsilon >0$. For $2\le \alpha \le 4-\varepsilon$ we have
$$
\sum_{n\le x \atop \Omega(n) \ge \alpha \log_2 x} \tau(n) \ll x (\log x)^{\alpha(\log 2 - \log \alpha +1)-1}.
$$
\end{lemma}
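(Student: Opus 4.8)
The plan is to bound the divisor sum $\sum_{n\le x,\ \Omega(n)\ge \alpha\log_2 x}\tau(n)$ by introducing a multiplicative weight $z^{\Omega(n)}$ with $z\ge 1$ chosen to capture the constraint $\Omega(n)\ge\alpha\log_2 x$, and then estimating the resulting weighted sum $\sum_{n\le x}\tau(n)z^{\Omega(n)}$. Concretely, for any $z\ge 1$ the indicator of $\Omega(n)\ge\alpha\log_2 x$ is at most $z^{\Omega(n)-\alpha\log_2 x}$, so the left-hand side is at most $(\log x)^{-\alpha\log z}\sum_{n\le x}\tau(n)z^{\Omega(n)}$. I would then invoke a standard mean-value estimate for nonnegative multiplicative functions (e.g.\ Shiu's theorem, or Halberstam--Richert): writing $g(n)=\tau(n)z^{\Omega(n)}$, the function $g$ is multiplicative with $g(p)=2z$ on primes and $g(p^j)=(j+1)z^j$ on prime powers, so $\sum_{p\le x}g(p)/p = 2z\log_2 x+O(1)$, and the standard bound gives $\sum_{n\le x}g(n)\ll x(\log x)^{2z-1}$, the prime-power contributions being absorbed since $2z<8$ when $\alpha<4$.

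Combining the two displays, for every $z\in[1,?)$ (the upper range dictated by where the multiplicative mean-value bound stays valid, which is fine as we will take $z<2$) we obtain
$$
\sum_{n\le x,\ \Omega(n)\ge\alpha\log_2 x}\tau(n)\ll x(\log x)^{2z-1-\alpha\log z}.
$$
Now optimize the exponent $h(z)=2z-1-\alpha\log z$ over $z\ge1$: $h'(z)=2-\alpha/z=0$ gives $z=\alpha/2$, which lies in $[1,2)$ precisely because $2\le\alpha<4$. Substituting $z=\alpha/2$ yields exponent $\alpha-1-\alpha\log(\alpha/2)=\alpha(1+\log 2-\log\alpha)-1$, which is exactly the claimed exponent. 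The restriction $\alpha\le 4-\varepsilon$ ensures $z=\alpha/2\le 2-\varepsilon/2$ stays safely below the value $2$ where $2z$ would reach $4$, keeping the mean-value estimate's implied constant uniform; this is also where the hypothesis is genuinely used.

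The main obstacle is making the multiplicative mean-value input fully rigorous and uniform in $z$ over the compact range $z\in[1,2-\varepsilon/2]$, including the contribution of higher prime powers to $\prod_{p\le x}(1+g(p)/p+g(p^2)/p^2+\cdots)$. For $z$ bounded away from $2$ this product is $\asymp(\log x)^{2z}$ with an implied constant depending only on $\varepsilon$ (the prime-power tail $\sum_j (j+1)z^j/p^j$ converges since $z/p\le z/2<1$), so Shiu's theorem or the Landau--Selberg--Delange-type estimate applies with a constant depending only on $\varepsilon$, and the final bound holds with an implied constant depending only on $\varepsilon$, as stated. One should double-check the case $\alpha=2$ (where $z=1$ and the bound reduces to $\sum_{n\le x}\tau(n)\ll x\log x$, which is classical) and note that $h(z)$ is convex so $z=\alpha/2$ is indeed the global minimum on $[1,\infty)$, so no boundary case competes.
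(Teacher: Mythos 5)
Your argument is correct and follows the same skeleton as the paper's proof: Rankin's trick with the weight $y^{\Omega(n)}$, the mean-value bound $\sum_{n\le x}\tau(n)y^{\Omega(n)}\ll x(\log x)^{2y-1}$ for $y$ bounded away from $2$, and optimization at $y=\alpha/2$, with $\alpha\le 4-\varepsilon$ used exactly where you use it. The only real difference is how the weighted mean-value bound is obtained. The paper keeps it elementary and self-contained (a variation of Exercise 05 in Hall--Tenenbaum): it writes $y^{\Omega(n)}=\sum_{d\mid n}f(d)$ with $f(p^k)=y^k(1-1/y)$, swaps the order of summation, bounds $\sum_{m\le x/d}\tau(m)\le (x/d)\log x$, and is left with an Euler product that is $\ll (\log x)^{2y}$; the restriction $y\le 2-\varepsilon$ enters only to make the Euler factor at $p=2$ converge. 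You instead invoke a black-box mean-value theorem; that is legitimate, but be careful with the reference: Shiu's theorem in its standard form requires $g(n)\ll_\delta n^{\delta}$, which fails here since $g(2^k)=\tau(2^k)z^k\approx n^{\log z/\log 2}$ with exponent close to $1$ when $z$ is near $2$. The Halberstam--Richert route you also name (e.g.\ Theorem 01 of \cite{HT}, with $g(p^j)=(j+1)z^j\le \lambda_1(\varepsilon)\lambda_2^j$, $\lambda_2=2-\varepsilon/4<2$) does apply uniformly for $1\le z\le 2-\varepsilon/2$ and gives the stated bound, so cite that instead; note also that the relevant constraint is not ``$2z<8$'' but precisely the convergence of $\sum_j g(2^j)/2^j$, i.e.\ $z$ bounded away from $2$, which you correctly identify later in your write-up. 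With that citation fixed, your proof is complete; the paper's version just trades the appeal to a general theorem for a short convolution computation.
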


\begin{proof}
This is a variation of Exercise 05 in \cite{HT}.
Write $y^{\Omega(n)} = \sum_{d|n} f(d)$, so that $f(n)$ is multiplicative and 
$f(p^k)=y^k(1-1/y)$ for $k\ge 1$, by M\"{o}bius inversion. For $0\le y \le 2-\varepsilon$, 
\begin{equation*}
\begin{split}
\sum_{n\le x} \tau(n)  y^{\Omega(n)} & = \sum_{n\le x}\tau(n) \sum_{d|n} f(d) 
\le  \sum_{d\le x} f(d) \tau(d) \sum_{m\le x/d}\tau(m) \\
& \le x \log x \sum_{d\le x} f(d)\tau(d) /d 
\le  x \log x  \sum_{P^+(d)\le x} f(d)\tau(d) /d \\
& = x \log x \prod_{ p \le x} \sum_{k\ge 0} f(p^k)\tau(p^k)/p^k 
\ll x (\log x)^{2y-1}.
\end{split}
\end{equation*}
If $1\le y \le 2-\varepsilon$, we get 
$$
\sum_{n\le x \atop \Omega(n) \ge \alpha \log_2 x} \tau(n) y^{ \alpha \log_2 x} \ll x(\log x)^{2y-1}.
$$
The result now follows with $y=\alpha/2$.
\end{proof}

\begin{proof}[Proof of Corollary \ref{cortau}]
Since  $2^{\omega(n)} \le \tau(n) \le 2^{\Omega(n)}$ for all $n\ge 1$, 
the estimate \eqref{cortaunorm} and the lower bound in \eqref{cortauave} follow at once from \eqref{cor2norm}.
For the upper bound in \eqref{cortauave}, we write 
$$
\sum_{n\in \mathcal{B}(x) \atop \Omega(n) \le e \log_2 x} \tau(n) \le B(x) 2^{e \log_2 x} = B(x) (\log x)^{e \log 2}
$$
and
$$
\sum_{n\in \mathcal{B}(x) \atop \Omega(n) \ge e \log_2 x} \tau(n) 
\le \sum_{n\le x \atop \Omega(n) \ge e \log_2 x} \tau(n) 
\ll  x (\log x)^{e \log 2 -1} \ll B(x) (\log x)^{e \log 2},
$$
by Lemma \ref{lemOmTau} with $\alpha = e$. 
\end{proof}

\section{Multiples of $q$ in $\mathcal{B}$}\label{secmult}

In this section we develop some general identities for sets $\mathcal{B}$, defined by \eqref{Bdef}, with
\begin{equation}\label{thetacond}
\theta : \mathbb{N} \to \mathbb{R}\cup \{\infty\}, \quad \theta(1)\ge 2, \quad \theta(n)\ge P^+(n) \quad (n\ge 2),
\end{equation}
where $P^+(n)$ denotes the largest prime factor of $n$. 
Let
$$
\Phi(x,y)=1_{x\ge 1}+|\{2\le n\le x : P^-(n)>y\}|,
$$
where $P^-(n)$ denotes the smallest prime factor of $n$. 
Let
$$
\psi(n) := 
\begin{cases} 1 &\mbox{if } n \in \mathcal{B}\\
 0 & \mbox{else.} \end{cases}
$$
and define
$$
\lambda_n(s) := \frac{\psi(n)}{n^s} \prod_{p\le \theta(n)}\left(1-\frac{1}{p^s}\right), \quad \lambda_n:=\lambda_n(1),
$$
$$
\mu_n(s) := \sum_{p\le \theta(n)} \frac{\log p}{p^s-1} - \log n, \quad \mu_n:=\mu_n(1).
$$

\begin{lemma}
Let $\theta$ satisfy \eqref{thetacond} and 
let $q_1\le q_2 \le \ldots \le q_k$ be primes.
For $x\ge 0$, 
\begin{equation}\label{phi0}
\sum_{n\ge 1} \psi(n)\Phi\left(\frac{x}{n},\theta(n)\right) =\lfloor x \rfloor
\end{equation}
and
\begin{equation}\label{phik}
\sum_{n\ge 1 \atop q_1\cdots q_k |n} \psi(n)\Phi\left(\frac{x}{n},\theta(n)\right)
= \sum_{\theta(n)\ge q_k \atop q_1\cdots q_{k-1} |n}\psi(n)\Phi\left(\frac{x}{nq_k},\theta(n)\right).
\end{equation}
\end{lemma}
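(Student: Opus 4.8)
The plan is to prove both identities by a counting argument based on the factorization of integers according to membership in $\mathcal{B}$. For \eqref{phi0}, the key observation is that every positive integer $m$ has a \emph{unique} representation $m = n \cdot r$, where $n$ is the largest initial-segment divisor of $m$ lying in $\mathcal{B}$ (i.e. $n \in \mathcal{B}$, $n \mid m$, and the primes dividing $r = m/n$ all exceed $\theta(n)$), together with $P^-(r) > \theta(n)$. Concretely: write $m = p_1^{\alpha_1}\cdots p_j^{\alpha_j}$ with $p_1 < \cdots < p_j$, and let $i$ be the largest index such that $p_1^{\alpha_1}\cdots p_{i-1}^{\alpha_{i-1}}$ satisfies the defining inequalities \eqref{Bdef} while $p_i > \theta(p_1^{\alpha_1}\cdots p_{i-1}^{\alpha_{i-1}})$ (taking $n=m$ if no such $i$ exists). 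Then $n := p_1^{\alpha_1}\cdots p_{i-1}^{\alpha_{i-1}} \in \mathcal{B}$, and $r := m/n$ has $P^-(r) = p_i > \theta(n)$. Conversely any pair $(n,r)$ with $\psi(n)=1$ and $P^-(r) > \theta(n)$ gives such an $m = nr \le x$ iff $r \le x/n$ with $P^-(r) > \theta(n)$, which is exactly what $\Phi(x/n, \theta(n))$ counts (the summand $1_{x\ge 1}$ in the definition of $\Phi$ accounts for $r=1$). Summing over all $m \le x$ gives $\lfloor x \rfloor$ on the right, establishing \eqref{phi0}.

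For \eqref{phik}, I would proceed by induction on $k$, or equivalently reduce to the single step $k \mapsto k-1$ combined with a reindexing. Fix the primes $q_1 \le \cdots \le q_k$. On the left-hand side we restrict the sum in \eqref{phi0} to $n$ divisible by $q_1 \cdots q_k$. The idea is that if $\psi(n) = 1$ and $q_1\cdots q_k \mid n$, then since $n \in \mathcal{B}$ and the primes appearing in $n$ in increasing order must each be $\le \theta(\text{previous part})$, the prime $q_k$ (being the largest of the $q_i$, hence appearing no earlier than position of $q_k$ in $n$) forces $\theta$ of the part of $n$ below $q_k$ to be at least $q_k$. More precisely, write $n = n' q_k^{\beta}$ where $n'$ collects the prime powers in $n$ at primes $< q_k$ together with the ``extra'' powers structure — actually the cleaner route is: peel off one factor of $q_k$. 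Given $n$ with $\psi(n)=1$, $q_1\cdots q_k \mid n$, set $n = q_k m$; one checks $\psi(m)=1$ is \emph{not} generally true, so instead one peels $q_k$ off the \emph{value of $x$} rather than off $n$: replace the pair $(n, r)$ on the left (with $q_1\cdots q_k \mid n$, $P^-(r) > \theta(n)$, $nr \le x$) by noting $q_1\cdots q_k \mid n$ is equivalent to $q_1 \cdots q_{k-1} \mid n$ together with $q_k \mid n$, and $q_k \mid n$ with $n \in \mathcal{B}$ and $q_k = \max q_i$ implies $\theta(n) \ge \theta(\text{of the initial part before } q_k) \ge q_k$. This is where one uses the monotonicity/structure: actually the honest statement is just $\theta(n) \ge q_k$ because $q_k \mid n$, $n \in \mathcal{B}$ forces $\theta$ evaluated at an initial segment to dominate $q_k$, and by \eqref{thetacond} $\theta(n) \ge P^+(n) \ge q_k$. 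So on the left we have exactly the $n$ with $\psi(n) = 1$, $q_1\cdots q_{k-1} \mid n$, $\theta(n) \ge q_k$, $q_k \mid n$; writing the count $\Phi(x/n, \theta(n))$ and replacing $x/n$ by $(x/(nq_k)) \cdot q_k$ and absorbing the constraint $q_k \mid n$ appropriately identifies the sum with $\sum_{\theta(n)\ge q_k,\ q_1\cdots q_{k-1}\mid n} \psi(n) \Phi(x/(nq_k), \theta(n))$.

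The main obstacle, and the place where the bookkeeping must be done carefully, is the precise handling of the divisibility constraint $q_k \mid n$ versus the shift $x \mapsto x/q_k$ in the $\Phi$ argument: one must verify that ``$\psi(n)=1$, $q_1\cdots q_k \mid n$'' on the left corresponds bijectively, \emph{with the $\Phi$-weights matching}, to ``$\psi(n)=1$, $q_1\cdots q_{k-1}\mid n$, $\theta(n)\ge q_k$'' on the right, where the extra factor $q_k$ has been moved from inside $n$ into the $\Phi$-count (i.e. $\Phi(x/n,\theta(n))$ with $q_k\mid n$ becomes, after writing $n = q_k n''$... ). The subtlety is that $q_k n''$ need not be the $\mathcal{B}$-part structure of anything nice, so instead one should argue directly at the level of the identity \eqref{phi0}: apply \eqref{phi0} but track, for each $m$, whether $q_1\cdots q_k \mid (\text{its }\mathcal{B}\text{-part }n)$, and re-sort the resulting sum. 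I expect roughly half the proof's length to be spent making this reindexing airtight, in particular checking that no integer is double-counted and that the condition $\theta(n) \ge q_k$ is exactly the image of the condition $q_k \mid n$ under the peeling, using $\theta(n) \ge P^+(n)$ from \eqref{thetacond} together with the fact that $q_k$ is the largest of the $q_i$. The rest — the uniqueness of the $\mathcal{B}$-factorization for \eqref{phi0} — is straightforward once the decomposition is set up, since it is literally the greedy reading of the prime factorization against the rule \eqref{Bdef}.
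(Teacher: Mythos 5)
Your treatment of \eqref{phi0} is fine: the unique factorization $m=nr$ with $n\in\mathcal{B}$ and $P^-(r)>\theta(n)$ (so that the $r$'s attached to a given $n$ are counted by $\Phi(x/n,\theta(n))$) is exactly the standard argument, which the paper simply cites from an earlier work. The problem is \eqref{phik}, which is the actual content of the lemma, and there your central step is not valid. You claim that, since $q_k\mid n$ and $n\in\mathcal{B}$ force $\theta(n)\ge P^+(n)\ge q_k$, the left-hand sum can be ``identified'' with the right-hand sum by replacing $x/n$ by $(x/(nq_k))\cdot q_k$ and ``absorbing'' the constraint $q_k\mid n$. No such term-by-term identification exists: the right-hand side runs over a strictly larger set of $n$ (all $n\in\mathcal{B}$ with $q_1\cdots q_{k-1}\mid n$ and $\theta(n)\ge q_k$, most of which are not divisible by $q_k$), and the weights $\Phi(x/n,\theta(n))$ and $\Phi(x/(nq_k),\theta(n))$ do not match for any individual $n$. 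The identity \eqref{phik} holds only in aggregate, and already for $k=1$ its proof requires evaluating \eqref{phi0} at the shifted point $x/q_1$, not a reindexing of one sum into the other. Your fallback suggestion (``apply \eqref{phi0} and track whether $q_1\cdots q_k$ divides the $\mathcal{B}$-part of $m$'') points in the right direction but is never executed, and you explicitly defer the entire bookkeeping; that bookkeeping is the proof.

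What the paper actually does: count the multiples $m\le x$ of $q_1\cdots q_k$ via the unique factorization $m=nr$, classifying them by the largest index $j$ with $q_1\cdots q_j\mid n$. Since the primes $q_{j+1},\dots,q_k$ then exceed $\theta(n)\ge P^+(n)$, they must divide the rough part $r$, so $\theta(n)<q_{j+1}$ when $j<k$ and the $r$'s are counted by $\Phi\bigl(x/(nq_{j+1}\cdots q_k),\theta(n)\bigr)$. This yields the key identity \eqref{phisum} expressing $\lfloor x/(q_1\cdots q_k)\rfloor$ as a sum over $j=0,1,\dots,k$. One then proves \eqref{phik} by induction on $k$: for $k=1$, compare \eqref{phisum} with \eqref{phi0} evaluated at $x/q_1$; for the inductive step, split each intermediate sum $\sum_{\theta(n)<q_{j+1},\,q_1\cdots q_j\mid n}$ as $\sum_{q_1\cdots q_j\mid n}-\sum_{\theta(n)\ge q_{j+1},\,q_1\cdots q_j\mid n}$, apply the inductive hypothesis (at the shifted argument $x/(q_{j+1}\cdots q_k)$) to the first piece, and observe that the sum over $j$ telescopes, leaving exactly \eqref{phik} after another application of \eqref{phi0}. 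Your proposal contains neither the decomposition \eqref{phisum} nor any mechanism (such as this telescoping with shifted arguments) for handling the intermediate cases $0<j<k$, so the proof of \eqref{phik} has a genuine gap rather than a merely unpolished write-up.
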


\begin{proof}
The relation \eqref{phi0} is \cite[Lemma 3]{SPA}. We will show \eqref{phik}.
Every $m\in q_1\cdots q_k \mathbb{N}$ factors uniquely as $m=nr$ where $n\in \mathcal{B}$ and $P^-(r)>\theta(n)$ if $r>1$.
If $q_1 \nmid n$ then $\theta(n)<q_1$. 
If $q_1|n$, let $j$ be the largest index such that $q_1\cdots q_j|n$, so that $q_{j+1}>\theta(n)$ if $j<k$. 
We count all integer multiples of $q_1\cdots q_k$ up to $x$ according to 
$j$ and $n$:
\begin{multline}\label{phisum}
\left\lfloor \frac{x}{q_1\cdots q_k}\right\rfloor = 
\sum_{\theta(n)<q_1}\psi(n) \Phi\left(\frac{x}{n q_1\cdots q_k},\theta(n)\right)\\
+\sum_{j=1}^{k-1} \sum_{\theta(n)<q_{j+1} \atop q_1\cdots q_{j}|n} \psi(n) \Phi\left(\frac{x }{n q_{j+1}\cdots q_k},\theta(n)\right)
+ \sum_{q_1\cdots q_{k}|n}\psi(n) \Phi\left(\frac{x}{n},\theta(n)\right).
\end{multline}
We can now establish \eqref{phik} by induction on $k$. 
When $k=1$, \eqref{phisum} and \eqref{phi0} yield \eqref{phik}. 
For the inductive step, we write the inner sum of \eqref{phisum} as
$$
\sum_{\theta(n)<q_{j+1} \atop q_1\cdots q_{j}|n}  = \sum_{ q_1\cdots q_{j}|n} - \sum_{\theta(n)\ge q_{j+1} \atop q_1\cdots q_{j}|n}
$$
and use the inductive hypothesis on the sum $ \sum_{ q_1\cdots q_{j}|n} $ to get
\begin{multline*}
 \sum_{\theta(n)<q_{j+1} \atop q_1\cdots q_{j}|n}  \psi(n) \Phi\left(\frac{x }{n q_{j+1}\cdots q_k},\theta(n)\right)
   = \sum_{ \theta(n)\ge q_{j} \atop q_1\cdots q_{j-1}|n} \psi(n) \Phi\left(\frac{x }{n q_{j}\cdots q_k},\theta(n)\right)\\
  - \sum_{\theta(n)\ge q_{j+1} \atop q_1\cdots q_{j}|n}  \psi(n) \Phi\left(\frac{x }{n q_{j+1}\cdots q_k},\theta(n)\right).
\end{multline*}
Thus, the sum over $j$ in \eqref{phisum} is telescoping and the result follows from \eqref{phi0}.
\end{proof}

\begin{lemma}\label{lemlambda}
Let $\theta$ satisfy \eqref{thetacond} and 
let $q_1\le q_2 \le \ldots \le q_k$ be primes. 
For $\re(s)>1$ we have
\begin{equation}\label{lambda0}
\sum_{n\ge 1 } \lambda_n(s) = 1
\end{equation}
and
\begin{equation}\label{lambdak}
\sum_{n\ge 1 \atop q_1\cdots q_k |n} \lambda_n(s) = \frac{1}{q_k^s} \sum_{\theta(n)\ge q_k \atop q_1\cdots q_{k-1} |n} \lambda_n(s).
\end{equation}
Both relations hold at $s=1$ if $B(x)=o(x)$. 
\end{lemma}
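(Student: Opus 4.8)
The plan is to prove both identities first for $\re(s)>1$, where they are formal consequences of the combinatorial identities \eqref{phi0} and \eqref{phik}, and then to pass to $s=1$ by a limiting argument in which the hypothesis $B(x)=o(x)$ supplies the needed uniform control on the tails.

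For $\re(s)=\sigma>1$: writing $\Phi(x/n,\theta(n))=\sum_{r\le x/n}[\,r=1\text{ or }P^-(r)>\theta(n)\,]$, the identity \eqref{phi0} becomes $\lfloor x\rfloor=\sum_{m\le x}c_m$ with $c_m=\sum_{nr=m}\psi(n)[\,r=1\text{ or }P^-(r)>\theta(n)\,]$; comparing the values at consecutive integers gives $c_m=1$ for all $m$ (this is exactly the unique factorization $m=nr$ with $n\in\mathcal B$ that underlies \eqref{phi0}). Forming Dirichlet series and using $\sum_{r:\,r=1\text{ or }P^-(r)>\theta(n)}r^{-s}=\prod_{p>\theta(n)}(1-p^{-s})^{-1}$ — all sums converging absolutely, since they are dominated by $\zeta(\sigma)^2$ — yields
$$
\zeta(s)=\sum_{n\ge 1}\psi(n)n^{-s}\prod_{p>\theta(n)}(1-p^{-s})^{-1}=\zeta(s)\sum_{n\ge 1}\lambda_n(s),
$$
which is \eqref{lambda0} after dividing by $\zeta(s)$. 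Applying the same reasoning to \eqref{phik}, whose two sides are again summatory functions of sequences that therefore coincide term by term, gives $\zeta(s)\sum_{q_1\cdots q_k\mid n}\lambda_n(s)=q_k^{-s}\zeta(s)\sum_{\theta(n)\ge q_k,\ q_1\cdots q_{k-1}\mid n}\lambda_n(s)$, i.e.\ \eqref{lambdak}.

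For $s=1$, assume $B(x)=o(x)$. First, for each fixed $N$ we have $\sum_{n\le N}\psi(n)\Phi(x/n,\theta(n))\le\lfloor x\rfloor$ (all terms of \eqref{phi0} are nonnegative), and by the Legendre identity $\Phi(y,z)=y\prod_{p\le z}(1-1/p)+O_z(1)$ the left side, divided by $x$, tends to $\sum_{n\le N}\lambda_n$; letting $x\to\infty$ and then $N\to\infty$ gives $\sum_n\lambda_n\le 1$, so in particular the series converges and, since $\prod_{p\le z}(1-1/p)\asymp 1/\log z$ for $z\ge 2$ by Mertens, $\sum_n\psi(n)/(n\log\theta(n))<\infty$. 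The essential point is now a tightness estimate: by the elementary sieve bound $\Phi(y,z)\ll 1+y/\log z$ (valid for $y\ge 1$, $z\ge 2$),
$$
\sum_{N<n\le x}\psi(n)\,\Phi\!\left(\frac{x}{n},\theta(n)\right)\ll B(x)+x\sum_{n>N}\frac{\psi(n)}{n\log\theta(n)},
$$
whence $\limsup_{x\to\infty}x^{-1}\sum_{N<n\le x}\psi(n)\Phi(x/n,\theta(n))\to 0$ as $N\to\infty$, using $B(x)=o(x)$ and the convergence of $\sum_n\psi(n)/(n\log\theta(n))$. Since in addition $x^{-1}\psi(n)\Phi(x/n,\theta(n))\to\lambda_n$ for each $n$ and $x^{-1}\sum_{n}\psi(n)\Phi(x/n,\theta(n))=\lfloor x\rfloor/x\to 1$, the standard interchange-of-limits argument (pick $N$ making the tail small, then let $x\to\infty$, then $N\to\infty$) forces $\sum_n\lambda_n=1$, which is \eqref{lambda0} at $s=1$. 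For \eqref{lambdak} at $s=1$ one applies the same termwise passage and tightness bound to both sides of \eqref{phik}: the left side then converges to $\sum_{q_1\cdots q_k\mid n}\lambda_n$ and the right side to $q_k^{-1}\sum_{\theta(n)\ge q_k,\ q_1\cdots q_{k-1}\mid n}\lambda_n$, and these are equal because the two sides of \eqref{phik} agree for every $x$.

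The main obstacle is exactly this passage to $s=1$: one cannot merely divide \eqref{phi0} by $x$ and let $x\to\infty$, because a priori mass can escape to large $n$ — indeed $\sum_{n\in\mathcal B}1/n$ typically diverges, so the crude bound $\Phi(x/n,\theta(n))\le x/n$ is useless. The fix is the independent tail estimate above, which is where both the sieve bound for $\Phi$ and the hypothesis $B(x)=o(x)$ are genuinely used; moreover the convergence $\sum_n\lambda_n<\infty$ that makes this tail bound meaningful must itself first be extracted from \eqref{phi0}.
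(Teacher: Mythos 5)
Your proof is correct, and it reaches the lemma by a genuinely different route than the paper. For $\re(s)>1$, the paper quotes \eqref{lambda0} from earlier work and proves \eqref{lambdak} by re-running the unique-factorization rearrangement at the level of Dirichlet series: it establishes the identity \eqref{zetaid} for $\sum_{q_1\cdots q_k\mid m}m^{-s}$ and then performs an induction on $k$ with a telescoping sum, exactly mirroring the combinatorial proof of \eqref{phik}. You instead use \eqref{phi0} and \eqref{phik} as black boxes: since each side is a summatory function valid for every $x\ge 0$, the underlying coefficients coincide termwise, and multiplying by $\zeta(s)$ (equivalently, summing $r^{-s}$ over $r=1$ or $P^-(r)>\theta(n)$) converts the counting identities directly into \eqref{lambda0} and \eqref{lambdak}; this reuses the induction already done in the proof of \eqref{phik} rather than repeating it, which is shorter and arguably cleaner, at the cost of not being self-contained at the series level. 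For $s=1$ the paper simply defers to the argument in \cite{SPA}; your tightness argument --- termwise convergence via Legendre/Mertens for fixed $n$, the sieve bound $\Phi(y,z)\ll 1+y/\log z$, the convergence of $\sum_n\psi(n)/(n\log\theta(n))$ extracted from \eqref{phi0}, and the hypothesis $B(x)=o(x)$ to kill the count of large $n\in\mathcal{B}$ --- is essentially that cited argument, spelled out explicitly, and it applies verbatim to both sides of \eqref{phik} as you say. One trivial case worth a remark: \eqref{thetacond} permits $\theta(n)=\infty$, where the Legendre expansion does not literally apply, but then $\Phi(y,\theta(n))=1_{y\ge 1}$ and $\lambda_n=0$ at $s=1$, so the termwise limits and tail bounds go through with $1/\log\theta(n)$ read as $0$.
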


\begin{proof}
The relation \eqref{lambda0} is \cite[Lemma 1]{CFAE} when $\re(s)>1$ and \cite[Theorem 1]{SPA} when $s=1$. 
The proof of \eqref{lambdak} mirrors that of \eqref{phik}. We first assume $\re(s)>1$. 
Every $m\in q_1\cdots q_k \mathbb{N}$ factors uniquely as $m=nr$ where $n\in \mathcal{B}$ and $P^-(r)>\theta(n)$ if
$r>1$.
If $q_1 \nmid n$ then $\theta(n)<q_1$. 
If $q_1|n$, let $j$ be the largest index such that $q_1\cdots q_j|n$, so that $q_{j+1}>\theta(n)$ if $j<k$. 
We rearrange the terms of the Dirichlet series $\sum_{ q_1\cdots q_k |m } m^{-s}$ according to $n$ and $j$. 
After dividing by $\zeta(s)$, this shows that, for $\re(s) >1$,
\begin{equation}\label{zetaid}
\frac{1}{(q_1\cdots q_k)^s} =\sum_{\theta(n)<q_1} \frac{\lambda_n(s) }{(q_1\cdots q_k)^s} 
+\sum_{j=1}^{k-1} \sum_{\theta(n)<q_{j+1} \atop q_1\cdots q_{j}|n} \frac{\lambda_n(s) }{(q_{j+1}\cdots q_k)^s}
+ \sum_{q_1\cdots q_{k}|n}\lambda_n(s) .
\end{equation}
We establish \eqref{lambdak} by induction on $k$.
When $k=1$, the result follows from applying \eqref{lambda0} to the first sum of \eqref{zetaid}. 
For the inductive step, note that the inner sum in \eqref{zetaid} is
\begin{equation*} 
\begin{split}
 \sum_{\theta(n)<q_{j+1} \atop q_1\cdots q_{j}|n} \frac{\lambda_n(s)}{(q_{j+1}\cdots q_k)^s}
  & = \sum_{ q_1\cdots q_{j}|n} \frac{\lambda_n(s)}{(q_{j+1}\cdots q_k)^s}
  - \sum_{\theta(n)\ge q_{j+1} \atop q_1\cdots q_{j}|n} \frac{\lambda_n(s)}{(q_{j+1}\cdots q_k)^s} \\
 & = \sum_{ \theta(n)\ge q_{j} \atop q_1\cdots q_{j-1}|n} \frac{\lambda_n(s)}{(q_{j}\cdots q_k)^s}
  - \sum_{\theta(n)\ge q_{j+1} \atop q_1\cdots q_{j}|n} \frac{\lambda_n(s)}{(q_{j+1}\cdots q_k)^s}, \\
\end{split}
\end{equation*}
by the inductive hypothesis. Thus, the sum over $j$ in \eqref{zetaid} is a telescoping sum and the result follows from \eqref{lambda0}. 

If $B(x)=o(x)$, the validity of \eqref{lambdak} at $s=1$ follows from \eqref{phik}, in much the same way that 
the validity of \eqref{lambda0} at $s=1$ follows from \eqref{phi0}, which is demonstrated in the proof of \cite[Thm. 1]{SPA}.
\end{proof}

\begin{lemma}
Let $\theta$ satisfy \eqref{thetacond} and 
let $q_1\le q_2 \le \ldots \le q_k$ be primes. For $\re(s)>1$ we have
\begin{equation}\label{mu0}
\sum_{n\ge 1 } \lambda_n(s)\mu_n(s) = 0
\end{equation}
and
\begin{equation}\label{muk}
\sum_{n\ge 1 \atop q_1\cdots q_k |n} \lambda_n(s)\mu_n(s) 
= \frac{1}{q_k^s} \sum_{\theta(n)\ge q_k \atop q_1\cdots q_{k-1} |n} \lambda_n(s)
\bigl(\mu_n(s) - \log q_k\bigr).
\end{equation}
\end{lemma}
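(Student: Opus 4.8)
The plan is to obtain both identities by differentiating the corresponding identities of Lemma~\ref{lemlambda} with respect to $s$. The one computation that makes this work is the identity
\begin{equation*}
\lambda_n'(s) = \lambda_n(s)\,\mu_n(s).
\end{equation*}
If $\psi(n)=0$ this is trivial, both sides vanishing identically; if $\psi(n)=1$, logarithmic differentiation of $\lambda_n(s)=n^{-s}\prod_{p\le\theta(n)}(1-p^{-s})$, together with $\frac{d}{ds}\log(1-p^{-s})=\frac{\log p}{p^s-1}$, gives $\lambda_n'(s)/\lambda_n(s)=-\log n+\sum_{p\le\theta(n)}\frac{\log p}{p^s-1}=\mu_n(s)$.

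Next I would check that term-by-term differentiation of the series in \eqref{lambda0} and \eqref{lambdak} is legitimate on $\re(s)>1$. Fix $\delta>0$ and set $\sigma=\re(s)\ge 1+\delta$. Since $\bigl|\prod_{p\le\theta(n)}(1-p^{-s})\bigr|\le\prod_p(1+p^{-\sigma})=\zeta(\sigma)/\zeta(2\sigma)\ll_\delta 1$ and $|p^s-1|\ge p^\sigma-1$, we get $|\lambda_n(s)|\ll_\delta n^{-1-\delta}$ and $|\mu_n(s)|\le\log n+\sum_p\frac{\log p}{p^\sigma-1}=\log n-\zeta'(\sigma)/\zeta(\sigma)\ll_\delta 1+\log n$. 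Hence $\sum_n|\lambda_n(s)\mu_n(s)|\ll_\delta\sum_n n^{-1-\delta}(1+\log n)<\infty$, uniformly on $\sigma\ge 1+\delta$, and the same bound controls the subsums restricted to multiples of $q_1\cdots q_k$ or to $n$ with $\theta(n)\ge q_k$. As $\delta>0$ is arbitrary, each of the series $\sum_n\lambda_n(s)$, $\sum_{q_1\cdots q_k|n}\lambda_n(s)$, $\sum_{\theta(n)\ge q_k,\,q_1\cdots q_{k-1}|n}\lambda_n(s)$ is holomorphic on $\re(s)>1$, and by Weierstrass's theorem its derivative is obtained by differentiating termwise, i.e.\ by replacing $\lambda_n(s)$ with $\lambda_n(s)\mu_n(s)$ in view of the first paragraph.

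Differentiating \eqref{lambda0} then gives $\sum_{n}\lambda_n(s)\mu_n(s)=0$, which is \eqref{mu0}. Differentiating \eqref{lambdak}, namely $\sum_{q_1\cdots q_k|n}\lambda_n(s)=q_k^{-s}\sum_{\theta(n)\ge q_k,\,q_1\cdots q_{k-1}|n}\lambda_n(s)$, and using the product rule with $\frac{d}{ds}q_k^{-s}=-q_k^{-s}\log q_k$, yields
\begin{align*}
\sum_{q_1\cdots q_k|n}\lambda_n(s)\mu_n(s)
&=-\frac{\log q_k}{q_k^s}\sum_{\theta(n)\ge q_k\atop q_1\cdots q_{k-1}|n}\lambda_n(s)
+\frac{1}{q_k^s}\sum_{\theta(n)\ge q_k\atop q_1\cdots q_{k-1}|n}\lambda_n(s)\mu_n(s),
\end{align*}
which is \eqref{muk} after combining the two sums on the right. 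The only point demanding care is the interchange of differentiation and summation in the second paragraph; everything else is the identity $\lambda_n'=\lambda_n\mu_n$ and one application of the product rule. As an alternative to differentiating the already-proved identities of Lemma~\ref{lemlambda}, one could differentiate the finite rearrangement \eqref{zetaid} and re-run the telescoping induction from the proof of \eqref{lambdak}, but the route above is shorter.
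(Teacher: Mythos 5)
Your proposal is correct and follows exactly the paper's route: the paper's entire proof is ``Differentiate \eqref{lambda0} and \eqref{lambdak} with respect to $s$,'' and you have simply supplied the routine details (the identity $\lambda_n'(s)=\lambda_n(s)\mu_n(s)$, the justification of termwise differentiation on $\re(s)>1$, and the product rule applied to the factor $q_k^{-s}$). Nothing further is needed.
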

\begin{proof}
Differentiate \eqref{lambda0} and \eqref{lambdak} with respect to $s$. 
\end{proof}

\section{Proof of Theorem \ref{thmDqdv}}\label{secpot2}

The following estimate for $\Phi(x,y)$, which differs from the one we used in \cite{PDD}, 
simplifies the proof of Theorem  \ref{thmDqdv}.

\begin{lemma}\label{PhiLemma}
Uniformly, for $x\ge 0$, $y\ge 2$, we have
\begin{equation*}
\begin{split}
\Phi(x,y) &= 1_{x\ge 1} + x \prod_{p\le y}\left(1-\frac{1}{p}\right)
+\frac{x}{\log y}\! \left\{w(u)-e^{-\gamma}\! -\left.\frac{y}{x}\right|_{x\ge y} 
\!\!+ O\left(\frac{e^{-u/3}}{\log y}\right)\! \right\}\\
&= 1_{x\ge 1} + x \prod_{p\le y}\left(1-\frac{1}{p}\right)
+O\left(\frac{xe^{-u/3}}{\log y}\right),
\end{split}
\end{equation*}
where $u=\frac{\log \max(1,x)}{\log y}$ and $w(u)$ is Buchstab's function. 
\end{lemma}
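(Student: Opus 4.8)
The plan is to prove the asymptotic formula for $\Phi(x,y)$ by leveraging the classical theory of the Buchstab function together with the known asymptotics for $\Phi(x,y)$ in the literature. Recall that $\Phi(x,y)$ counts $1$ (when $x\ge 1$) plus the integers $n$ in $[2,x]$ with no prime factor $\le y$. The starting point is the standard Buchstab-type estimate
\begin{equation*}
\Phi(x,y) = \omega_{\mathrm B}(u)\,\frac{x}{\log y} - \frac{y}{\log y}\cdot 1_{x\ge y} + O\!\left(\frac{x}{\log^2 y}\,e^{-u/3}\right) + 1_{x\ge 1},
\end{equation*}
valid uniformly for $x\ge 0$, $y\ge 2$, where $u = \log\max(1,x)/\log y$ and I write $\omega_{\mathrm B} = w$ for Buchstab's function. (This is, up to notation, the de Bruijn/Tenenbaum estimate; the error term $e^{-u/3}$ is crude but more than sufficient. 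Note that for $x<y$ one has $\Phi(x,y) = 1_{x\ge 1} + \lfloor x\rfloor - 1 = \lfloor x \rfloor$, and $w(u) = 1/u$ on $(1,2]$, so the formula is consistent there.) The only actual work is to convert the factor $\omega_{\mathrm B}(u)\,x/\log y$ into the shape $x\prod_{p\le y}(1-1/p) + \frac{x}{\log y}(w(u) - e^{-\gamma}) + \cdots$ claimed in the first displayed line.

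To do that I would invoke Mertens' theorem in its sharp form: $\prod_{p\le y}(1-1/p) = \frac{e^{-\gamma}}{\log y}\bigl(1 + O(e^{-c\sqrt{\log y}})\bigr)$, or even just $\prod_{p\le y}(1-1/p) = \frac{e^{-\gamma}}{\log y} + O(1/\log^2 y)$, which suffices here. Substituting,
\begin{equation*}
x\prod_{p\le y}\Bigl(1-\frac1p\Bigr) = \frac{x e^{-\gamma}}{\log y} + O\!\left(\frac{x}{\log^2 y}\right),
\end{equation*}
so that
\begin{equation*}
\omega_{\mathrm B}(u)\frac{x}{\log y} = x\prod_{p\le y}\Bigl(1-\frac1p\Bigr) + \frac{x}{\log y}\bigl(w(u) - e^{-\gamma}\bigr) + O\!\left(\frac{x}{\log^2 y}\right).
\end{equation*}
Feeding this into the Buchstab estimate, collecting the $-y/\log y\cdot 1_{x\ge y}$ term as $-\frac{x}{\log y}\cdot\frac{y}{x}\big|_{x\ge y}$, and absorbing the $O(x/\log^2 y)$ into the bracketed error $O(e^{-u/3}/\log y)$ (legitimate since $e^{-u/3}\ge e^{-1/3}$ when $u\le 1$, i.e. when $x\le y$, and when $x>y$ the term is genuinely smaller but we only claim an $O$), gives exactly the first displayed formula.

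For the second (cruder) displayed formula I would simply bound $|w(u) - e^{-\gamma}| \ll e^{-u/3}$ — this is the standard decay estimate $w(u) = e^{-\gamma} + O(e^{-u})$ for $u\ge 1$ combined with $w(u) = 1/u$ and $|1/u - e^{-\gamma}| \ll 1$ for $0\le u\le 1$ (interpreting $w(u) = 1/u$ only for $u\ge 1$, and noting $\Phi(x,y) = \lfloor x\rfloor$ handles $u<1$ directly so the bracket is bounded there) — and likewise $|y/x|\big|_{x\ge y} \le 1 \ll e^{-u/3}$ when... wait, that is false for large $u$; rather when $x\ge y$ one has $y/x = e^{-(u-1)\log y}$ which is $\ll e^{-u/3}$ uniformly. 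Hence all three terms in the bracket are $O(e^{-u/3})$, the $O(e^{-u/3}/\log y)$ inside is even smaller, and the whole correction is $O(x e^{-u/3}/\log y)$, yielding the second line. The main obstacle is purely bookkeeping: making sure the error terms are genuinely uniform down to $x = 0$ and across the ranges $x<1$, $1\le x<y$, $x\ge y$, and that the transition $w(u)=1/u$ versus the $e^{-\gamma}$ normalization is handled consistently; there is no deep difficulty, since everything reduces to the cited sieve asymptotics for $\Phi(x,y)$ and Mertens' product formula.
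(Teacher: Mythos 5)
Your reduction rests on quoting, as a known ``standard Buchstab-type estimate,'' the formula $\Phi(x,y)=w(u)\,x/\log y-(y/\log y)1_{x\ge y}+O\bigl(xe^{-u/3}/\log^2 y\bigr)+1_{x\ge 1}$ uniformly for $x\ge 0$, $y\ge 2$. That statement is not only nonstandard, it is false once $u$ is large: for fixed $y$ (say $y=3$) and $x\to\infty$ one has $\Phi(x,3)=x/3+O(1)$, while $xw(u)/\log 3\to e^{-\gamma}x/\log 3\approx 0.51\,x$, a discrepancy of order $x$, vastly larger than $xe^{-u/3}/\log^2 y$. The classical de Bruijn estimate (Tenenbaum, Thm.\ III.6.3) gives only the error $O(x/\log^2 y)$ with no decay in $u$; the entire content of the lemma is the $e^{-u/3}$ decay, and that decay is only possible when the main term is written as $x\prod_{p\le y}(1-1/p)$ rather than $e^{-\gamma}x/\log y$, since these two differ by roughly $x/\log^2 y$ (Mertens), an amount that does not shrink as $u$ grows. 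For the same reason your absorption step fails: you convert between $e^{-\gamma}x/\log y$ and $x\prod_{p\le y}(1-1/p)$ at a cost of $O(x/\log^2 y)$ and then absorb this into $O(xe^{-u/3}/\log^2 y)$; that is legitimate only for bounded $u$, and your justification (``when $x>y$ the term is genuinely smaller but we only claim an $O$'') is exactly backwards --- for $x\gg y$ the Mertens conversion error dominates the claimed error term. A smaller slip: for $x<y$ one has $\Phi(x,y)=1_{x\ge 1}$, since every $2\le n\le x$ has $P^-(n)\le n\le x<y$; your value $\lfloor x\rfloor$ is the smooth-number count $\Psi(x,y)$, not $\Phi(x,y)$.

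What is actually required --- and what the paper does --- is a range decomposition in which the large-$u$ regime is handled by sieve-type results giving $\Phi(x,y)=x\prod_{p\le y}(1-1/p)\{1+O(\cdot)\}$ with an error decaying in $u$: for $x\ge y$ with $\log y<(\log_2 x)^2$ (so $u\gg\sqrt{\log x}$) the paper invokes Tenenbaum's Theorems III.6.1 and III.5.1, while in the complementary range $\log y\ge(\log_2 x)^2$ it combines equations (49), (52), (59), (60) of Section III.6 with equation (6) of Section III.5, bounding $\int_0^\infty|w'(u-v)|y^{-v}\,dv\ll e^{-u/2}/\log y$; the case $x<y$ is the trivial-plus-Mertens case, which is the only part your argument treats in essentially the correct spirit. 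As it stands, your proposal assumes precisely the uniform, $u$-decaying estimate that constitutes the lemma, so it does not amount to a proof.
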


\begin{proof}
The second estimate follows from the first, since $w(u)-e^{-\gamma}\ll e^{-u}$ and,  if $x\ge y\ge 2$,  
then $y/x \ll  e^{-u/2}$. 

When $x\ge y \ge 2$ and $\log y \ge (\log_2 x)^2$, 
the first estimate follows from combining equations (49), (52), (59) and (60) of \cite[Sec. III.6]{Ten},
with equation (6) of \cite[Sec. III.5]{Ten},
where we estimate the integral in (52) as
$$
\int_0^\infty |w'(u-v)|y^{-v} dv \ll \int_0^\infty e^{-(u-v)/2} y^{-v} dv=\frac{e^{-u/2}}{\log y -1/2}
 \asymp \frac{e^{-u/2}}{\log y}.
$$

When $x\ge y \ge 2$ and $\log y < (\log_2 x)^2$, then $u \gg \sqrt{\log x}$ and the result follows from
\cite[Thm. III.6.1 and Thm. III.5.1]{Ten}.

When $x<y$, then $\Phi(x,y)=1_{x\ge 1}$ and $w(u)=0$, so that the result follows from Mertens' formula 
\cite[Thm. I.1.11]{Ten}. 
\end{proof}

\begin{lemma}\label{Laplace}
Assume $B(x)=B_t(x)$ is the counting function of a set $\mathcal{B}_t \subset \mathbb{N}$ that depends on the parameter $t$.
Assume
\begin{equation*}
B(x) = x \int_1^\infty \frac{B(y)}{y^2 \log yt} \left(e^{-\gamma}-   w\left(\frac{\log x/y}{\log yt}\right) \right)dy
+ R(x)\quad (x\ge 1),
\end{equation*}
such that the integrals  
$$
\alpha_{t} :=  e^{-\gamma} \int_1^\infty \frac{B(y)}{y^2 \log yt} dy, \quad
\beta_{t} := \frac{-1}{\log t} \int_1^\infty R(y) \frac{dy}{y^2}
$$
converge.
Then
\begin{equation*}
B(x) = x \eta_{t} d(v) + O\Bigl\{1+ x \beta_{t}(v+1)^{-3.03} + R(x)+I(x)+J(x)\Bigr\},
\end{equation*}
where $v= \log x / \log t$, $d(v)$ is given by \eqref{dinteq}, $\eta_{t} = \alpha_{t} + \beta_{t}$,  
$$
I(x)=\frac{x}{\log xt} \int_x^\infty R(y) \frac{dy}{y^2}, \quad 
J(x)  =\frac{x}{(\log xt)^{3.03}} \int_1^x R(y) (\log yt)^{2.03} \frac{dy}{y^2}.
$$
\end{lemma}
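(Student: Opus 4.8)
\textbf{Proof proposal for Lemma \ref{Laplace}.}

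The plan is to treat the hypothesis as a renewal-type integral equation for the density $B(x)/x$ and to compare it with the defining integral equation \eqref{dinteq} for $d(v)$. First I would rewrite everything in the multiplicative variable: set $v=\log x/\log t$, $y=t^{u}$ for $0\le u\le v$, and record that the hypothesis becomes
\begin{equation*}
\frac{B(t^{v})}{t^{v}} = \int_0^{v}\frac{B(t^{u})}{t^{u}}\cdot\frac{1}{u+1}\left(e^{-\gamma}-w\!\left(\frac{v-u}{u+1}\right)\right)du + \frac{R(t^{v})}{t^{v}},
\end{equation*}
after a change of variables $dy/y = \log t\, du$ and using $\log yt=(u+1)\log t$. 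The anticipated main term is $x\eta_t d(v)$, so the natural move is to substitute the ansatz $B(x)/x = \eta_t d(v) + \Delta(v)$ and derive an integral equation for the error $\Delta$. Using the known identity $\int_0^{v}\frac{d(u)}{u+1}\,du = \cdots$ (equivalently, integrating \eqref{dinteq} against the kernel), one checks that $\eta_t d(v)$ reproduces the $\alpha_t$-part of the main term and the constant $1$ coming from $x\ge 1$; the $\beta_t$-part is precisely engineered so that the ``tail'' contributions of $R$ are absorbed. The cleanest route is probably a Laplace-transform / Dirichlet-series argument: multiply through by $x^{-s-1}$ and integrate over $x\ge 1$, turning the convolution into a product, solving for $\widehat{B}(s)$, and recognizing the resulting transform as that of $x\eta_t d(v)$ plus an explicit remainder whose inverse transform is bounded by the stated quantities.

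Concretely, the steps I would carry out are: (1) Split $B(y)$ inside the defining integral as its main part plus remainder, so that $\alpha_t$ and $\beta_t$ appear after integrating the remainder term by parts or via Fubini; the appearance of $\beta_t$ with a minus sign and the factor $1/\log t$ matches exactly the $-\int R(y)\,dy/y^{2}$ normalization. (2) Use the known asymptotic $d(v)=C(v+1)^{-1}\{1+O((v+1)^{-2})\}$ from \cite{IDD3} together with the decay $e^{-\gamma}-w(u)\ll e^{-u}$ of the kernel to control convergence and to justify interchanging the order of integration. (3) Estimate the error terms one at a time: the $R(x)$ term is the ``diagonal'' contribution and passes through untouched; $I(x)$ comes from the tail $y>x$ of the remainder integral after the Fubini swap; $J(x)$, with its weight $(\log yt)^{2.03}$ and the matching $(\log xt)^{-3.03}$, arises from bounding $d(v-u)$ or its derivative on the range $1\le y\le x$ using $d(w)\ll (w+1)^{-1}$ and $w(u)-e^{-\gamma}\ll e^{-u}$, where the exponent $3.03$ (slightly above $3$) is dictated by needing the integral $\int (u+1)^{-3.03}\,du$ to converge while staying compatible with $d(v)\asymp(v+1)^{-1}$. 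The $x\beta_t (v+1)^{-3.03}$ term is the contribution of $\beta_t$ to the homogeneous solution beyond the leading $\eta_t d(v)$.

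The hard part will be step (3): getting the error terms into exactly the shapes $x\beta_t(v+1)^{-3.03}$, $I(x)$, and $J(x)$ requires a careful bookkeeping of which portion of the $R$-integral lands where after the Fubini interchange, and a delicate use of the bound $|d(v)-d(v')|\ll |v-v'|/(v+1)^{2}$ (or of $d'$) to convert differences $d(v)-d(v-u)$ into the claimed power saving. One must also verify that the implicit homogeneous equation $\Delta(v)=\int_0^{v}\Delta(u)K(u,v)\,du + (\text{explicit})$ has no spurious growing solution — i.e. that the operator $K$ has spectral radius $<1$ in the relevant weighted space — which is the analogue of the uniqueness already implicit in the definition of $d(v)$. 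Once convergence and the Fubini swap are justified, the remaining estimates are routine applications of the decay bounds for $w(u)-e^{-\gamma}$ and the size bound for $d(v)$, so I expect the bulk of the work to be organizational rather than conceptual.
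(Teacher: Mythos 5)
Your overall strategy---passing to the variable $v$, comparing the integral equation satisfied by $B$ with the defining equation \eqref{dinteq} for $d$, and peeling off the $\alpha_t$, $\beta_t$ and remainder contributions---is in the same spirit as the paper, which simply follows the second half of the proof of \cite[Thm.~1.3]{PDD}. But there is a concrete gap in your step (3): the exponent $3.03$ is not ``dictated by needing $\int (u+1)^{-3.03}\,du$ to converge''; it is inherited from the sharpened expansions $(v+1)d(v)=C+O((v+1)^{-2.03})$ and $(v+1)^2 d'(v)=-C+O((v+1)^{-2.03})$ (see \eqref{dest}, \eqref{dpest}, coming from \cite[Cor.~6]{IDD3} and its proof), and these are exactly the new inputs the lemma requires. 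The bound you actually propose to use, $|d(v)-d(v')|\ll |v-v'|/(v+1)^{2}$, encodes only $d'(v)\ll (v+1)^{-2}$ and would at best reproduce the weaker version with exponent $3$ (or extra logarithms) already used in \cite{PDD}; it cannot produce the term $x\beta_t(v+1)^{-3.03}$ nor the weights $(\log yt)^{2.03}$ and $(\log xt)^{-3.03}$ in $J(x)$, and in the subsequent application to Theorem \ref{thmDqdv} one would then lose the factor of $\log_2 x$ that the $-2.03$ exponent is designed to save. So without invoking the refined asymptotics of $d$ and $d'$, your plan cannot reach the stated conclusion.

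A secondary issue is that the proposal hedges between two distinct mechanisms (a direct renewal-type comparison and a Laplace/Dirichlet-transform factorization) without carrying out either, and raises a ``spectral radius $<1$'' uniqueness concern that the paper's route does not need: one substitutes the ansatz into the integral equation and bounds the resulting explicit integrals directly, the terms $R(x)$, $I(x)$, $J(x)$ and $x\beta_t(v+1)^{-3.03}$ arising from a specific splitting of the $R$- and $d$-integrals. That bookkeeping, which you explicitly defer, is the substance of the proof, so as written this is a plausible outline rather than a proof.
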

\begin{proof}
We follow the second half of the proof of \cite[Thm. 1.3]{PDD}.
The only modification needed is the use of the improved estimates
\begin{equation}\label{dest}
(v+1) d(v) = C + O((v+1)^{-2.03}), \quad (v\ge 0),
\end{equation}
and
\begin{equation}\label{dpest}
(v+1)^2 d'(v) = -C + O((v+1)^{-2.03}), \quad (v\ge 0).
\end{equation}
The estimate \eqref{dest} is a consequence of \cite[Cor. 6]{IDD3}, while 
\eqref{dpest} follows from inserting \eqref{dest} in the proof of \cite[Cor. 5]{IDD3}.
In \cite{PDD}, we used slightly weaker estimates for simplicity, with an exponent of $-2$ instead of $-2.03$ in the error terms.
In the proof of Theorem \ref{thmDqdv}, the improved exponent will save a factor of $\log_2 x$ (when estimating the contribution
from $R_2(x)$ to $J(x)$). 
\end{proof}

For $n\in \mathbb{N}$ with prime factorization $n=p_1 \cdots p_k$, where $p_1\le \ldots \le p_k$, define 
$$
F(n) := \max_{1\le j \le k} p_j^2 p_{j+1}\cdots p_k.
$$

\begin{lemma}\label{lemDzero}
If $x<\max(m,F(m)/t)$, then $D_m(x,t)=0$. 
\end{lemma}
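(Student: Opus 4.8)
The plan is to unpack the two conditions $x < m$ and $x < F(m)/t$ separately and show each forces $\mathcal{D}_m(x,t)=\emptyset$. The first is immediate: if $m\mid n$ and $n\le x<m$, there is no such $n\ge 1$, so $D_m(x,t)=0$. The substance is in the condition $x<F(m)/t$, and here I would argue that no multiple of $m$ below $x$ can be $t$-dense.

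First I would recall that $F(m)=\max_{1\le j\le k}p_j^2 p_{j+1}\cdots p_k$, where $m=p_1\cdots p_k$ with $p_1\le\cdots\le p_k$ is the prime factorization with multiplicity. Fix the index $j$ achieving the maximum, so $F(m)=p_j^2 p_{j+1}\cdots p_k$. Suppose for contradiction that $n\in\mathcal{D}_m(x,t)$, i.e. $n\le x$, $m\mid n$, and consecutive divisors of $n$ have ratio at most $t$. Since $m\mid n$, every prime $p_i$ divides $n$, and in particular $p_j\mid n$. The key observation is that the divisors of $n$ that are composed only of the primes $\ge p_j$ appearing in $m$ (counted with the multiplicity they have in $m$) are all genuine divisors of $n$; among consecutive such divisors the gaps are controlled, but more simply I would isolate one critical gap. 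Consider the divisor $e:=p_{j+1}\cdots p_k$ of $m$ (hence of $n$), and the divisor $p_j e=p_j p_{j+1}\cdots p_k$ of $m$. Between the divisor $e$ of $n$ and the next divisor of $n$ that is a multiple of $p_j e$, the smallest candidate is $p_j e$ itself, but $n$ might have divisors strictly between $e$ and $p_j e$; the point I need is that some consecutive pair of divisors of $n$ has ratio $\ge p_j$ unless $n$ has a prime factor $<p_j$ other than those forced. Since $p_1\le\cdots\le p_j$ are exactly the primes of $m$ below or equal to $p_j$, and $n$ is a multiple of $m$, I would argue that the largest divisor of $n$ that is $\le p_j e$ and uses only primes $>p_{j}$...

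Actually the cleanest route: let $d$ be the largest divisor of $n$ with $d<p_j e$, where $e=p_{j+1}\cdots p_k$. Then the next divisor $d'$ of $n$ after $d$ satisfies $d'\ge p_j e$, and by $t$-density $p_j e\le d'\le t d< t p_j e$, which gives no contradiction directly — so instead I would bound $d$ from above. Every divisor of $n$ that is $<p_j e$ and divisible by $e$ must be $e$ itself (the next one up is $p_j e$, using the fact that any prime factor of $n$ exceeding the product structure... this needs care). The honest statement is: among the divisors of $n$, consider those $\le p_j e$; the largest one, call it $d$, and the next divisor $d'$; then $d' \le td$. I claim $d < p_j e$ forces $d'\le p_j e\cdot(\text{something})$, and combined with $n\le x$ and the fact that $p_j e\mid n$ so $n\ge m$, I extract $x\ge n\ge$ (a multiple of $p_je$) and separately that the density forces $p_j e / d\le t$, with $d\le e\cdot(\text{largest proper structure})$. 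I would therefore show $d\le e$ whenever $d$ is a divisor of $n$ with $d<p_je$ that we can force, giving $p_j\le t$; more relevantly, I would show that the pair of consecutive divisors straddling $p_j e$ (or straddling $F(m)$ after rescaling) has ratio $\ge F(m)/(t\cdot\text{preceding divisor})$, and since the preceding divisor is $\le n/p_j \le x/p_j$... The hard part, and the step I expect to be the real obstacle, is pinning down exactly why the gap at $F(m)$ cannot be bridged: one must verify that a $t$-dense multiple $n$ of $m$ must satisfy $n\ge F(m)/t$, which amounts to showing that the divisor $p_j^2p_{j+1}\cdots p_k$-analogue... rather, that the divisor just below the "doubled prime" level $p_j\cdot p_j\cdots$ cannot be too large. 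Concretely: $p_j^2 p_{j+1}\cdots p_k = F(m)$ divides $n$ only if $p_j^2\mid n$, but $p_j^2$ need not divide $m$; the correct reading is that $F(m)$ is the obstruction size, and one shows: if $n$ is $t$-dense and $m\mid n$, then looking at divisors of $n$ of the form $p_j\cdot(\text{divisor of }p_{j+1}\cdots p_k)$ versus $p_j^2\cdot(\ldots)$ or versus the next multiple of $p_j$ at that scale, a gap of size essentially $p_j$ appears at height roughly $p_jp_{j+1}\cdots p_k$, so $n$ must extend at least to $\approx p_j \cdot (p_j p_{j+1}\cdots p_k)/t = F(m)/t$.

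Given the delicacy, my plan is to invoke the known structural characterization of $t$-dense integers rather than reprove it: a standard lemma (as in \cite{Saias1, Ten86, PDD}) states that $n$ with factorization $p_1^{\alpha_1}\cdots p_r^{\alpha_r}$, $p_1<\cdots<p_r$, is $t$-dense iff $p_1\le t$ and $p_{i}\le t\, p_1^{\alpha_1}\cdots p_{i-1}^{\alpha_{i-1}}$ for all $i$. I would apply this to a hypothetical $n\in\mathcal{D}_m(x,t)$: write $n$'s factorization and locate the prime $p_j$ realizing $F(m)=p_j^2 p_{j+1}\cdots p_k$ in $m$. Using $m\mid n$ and the $t$-density inequalities for $n$, I would derive $n\ge p_j^2 p_{j+1}\cdots p_k/t = F(m)/t$ by multiplying the relevant chain of inequalities, contradicting $x\ge n$ and $x<F(m)/t$. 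Thus both hypotheses are covered and $D_m(x,t)=0$. The main obstacle is correctly matching the combinatorial definition of $F(m)$ (with multiplicity, and the "$p_j^2$" factor) to the multiplicative structure of $t$-density so that the product of the density inequalities telescopes to exactly $F(m)/t$; I would handle the $p_j^2$ by noting it accounts for the divisor gap between $p_{j+1}\cdots p_k$ (using only primes above $p_j$) and $p_j\cdot p_{j+1}\cdots p_k$, which must be $\le t$, forcing $n$ to already contain enough of the small primes $p_1,\ldots,p_j$ that $n\ge p_j\cdot F(m)/(p_j t)=F(m)/t$.
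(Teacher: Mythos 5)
Your overall strategy (trivial case $x<m$, then use the structural criterion for $t$-density to force $n\ge F(m)/t$) is the right one, and in fact it is the paper's proof in disguise: the criterion $p_i\le t\,p_1^{\alpha_1}\cdots p_{i-1}^{\alpha_{i-1}}$ you cite is exactly equivalent to $F(n)\le nt$, and the paper simply combines this with the monotonicity $m\mid n\Rightarrow F(m)\le F(n)$ to get $F(m)\le F(n)\le nt\le xt$ and $m\le n\le x$, i.e.\ $x\ge\max(m,F(m)/t)$. But your write-up never actually carries out the decisive step, and the one concrete justification you offer for it is wrong: you cannot say that ``the divisor gap between $p_{j+1}\cdots p_k$ and $p_j p_{j+1}\cdots p_k$ must be $\le t$,'' because these two divisors of $n$ need not be consecutive, and that claim would amount to $p_j\le t$, which is false in general for $t$-dense multiples of $m$ (large prime factors are allowed as long as the smaller part of $n$ is large). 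All the earlier attempts in your proposal via consecutive-divisor gaps run into exactly this problem, which you yourself flag as ``the main obstacle,'' so as it stands the proof is not complete.

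Here is the missing step, carried out cleanly. Let $n\in\mathcal{D}_m(x,t)$ and write $n=r_1\cdots r_s$ with $r_1\le\cdots\le r_s$ (primes with multiplicity); the density criterion gives $r_i\le t\,r_1\cdots r_{i-1}$ for every $i$. Since $m\mid n$, there is an increasing embedding of the prime multiset $p_1\le\cdots\le p_k$ of $m$ into that of $n$; let $i$ be the position receiving $p_j$, where $j$ realizes $F(m)=p_j^2p_{j+1}\cdots p_k$. Then $p_{j+1},\ldots,p_k$ occupy distinct positions after $i$, so $r_i r_{i+1}\cdots r_s\ge p_jp_{j+1}\cdots p_k$, hence $r_1\cdots r_{i-1}=n/(r_i\cdots r_s)\le n/(p_j\cdots p_k)$. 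The single density inequality at position $i$ now gives $p_j=r_i\le t\,r_1\cdots r_{i-1}\le tn/(p_j\cdots p_k)$, i.e.\ $n\ge F(m)/t$; together with $n\ge m$ and $n\le x$ this is the contrapositive of the lemma. Note that this embedding argument is precisely the statement $F(m)\le F(n)$, so once you phrase the density criterion as $F(n)\le nt$ the whole lemma is a two-line verification, which is how the paper does it.
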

\begin{proof}
Note that $n \in \mathcal{D}_m(x,t)$ if and only if $n\le x$, $m|n$ and $F(n)\le nt$. 
Also, $m|n$ implies $F(m)\le F(n)$.
Thus, if $D_m(x,t)\neq 0$ and $n \in \mathcal{D}_m(x,t)$, then $m\le n \le x$ and $F(m)\le F(n) \le nt \le xt$,
so $x\ge  \max(m,F(m)/t)$.
\end{proof}

\begin{lemma}\label{lemprodUB}
Let $n\ge 2$ with prime factorization $n=p_1\cdots p_k$, $p_1 \le \ldots \le p_k$. If $F(n)\le x$, then 
$p_{k-j+1}\cdots p_k \le x^{1-2^{-j}}$ for $1\le j \le k$. 
\end{lemma}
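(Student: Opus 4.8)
The plan is to prove the bound by induction on $j$, using only the defining inequalities hidden inside $F(n)$: since $F(n)\le x$, for every index $i$ we have $p_i^2\, p_{i+1}\cdots p_k \le F(n) \le x$. It is convenient to write $P_j := p_{k-j+1}\cdots p_k$ for the product of the $j$ largest prime factors of $n$, with the convention $P_0=1$ (empty product), so that the claim becomes $P_j \le x^{1-2^{-j}}$ for $1\le j \le k$.

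For the base case $j=1$, I would take the index $i=k$ in the definition of $F(n)$: the corresponding term is $p_k^2\, p_{k+1}\cdots p_k = p_k^2$, so $p_k^2 \le F(n)\le x$ and hence $P_1 = p_k \le x^{1/2} = x^{1-2^{-1}}$. For the inductive step, assuming $P_j \le x^{1-2^{-j}}$ with $1\le j<k$, I would take the index $i=k-j$, whose term in $F(n)$ is $p_{k-j}^2\, p_{k-j+1}\cdots p_k = p_{k-j}^2 P_j$. Thus $p_{k-j}^2 P_j \le x$, giving $p_{k-j}\le (x/P_j)^{1/2}$, and therefore
\[
P_{j+1} = p_{k-j}\, P_j \le \left(\frac{x}{P_j}\right)^{1/2} P_j = x^{1/2} P_j^{1/2} \le x^{1/2}\, x^{(1-2^{-j})/2} = x^{1-2^{-(j+1)}},
\]
which closes the induction.

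There is no real obstacle here; the proof is a short two-line induction. The only points requiring a little care are the index bookkeeping --- matching the index $i=k-j$ in the definition of $F$ with the partial product $P_j$ of the $j$ largest primes --- and the treatment of the empty product when $j=k$, which is exactly what makes the base case and the full range $1\le j\le k$ work uniformly.
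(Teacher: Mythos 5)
Your proof is correct and follows essentially the same route as the paper: induction on $j$, using the term $p_{k-j}^2\,p_{k-j+1}\cdots p_k \le F(n)\le x$ at each step. The only cosmetic difference is that you compute directly $P_{j+1}\le x^{1/2}P_j^{1/2}$, whereas the paper phrases the inductive step slightly contrapositively; your version is, if anything, a touch cleaner.
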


\begin{proof}
We use induction on $j$. When $j=1$, the claim is that $p_k \le x^{1/2}$, which follows from $p_k^2 \le F(n) \le x$, for all $k\ge 1$.
Assume now that the claim is correct for some $j \in \mathbb{N}$ and all $k\ge j$. Let $k\ge j+1$. 
We have $p_{k-j}^2 p_{k-j+1}\cdots p_k \le F(n) \le x$. Thus, if $p_{k-j} p_{k-j+1}\cdots p_k \ge x^{1-2^{-j-1}}$, then
$p_{k-j} \le  x^{2^{-j-1}}$. By the inductive hypothesis,
$$
p_{k-j} (p_{k-j+1}\cdots p_k) \le  x^{2^{-j-1}} x^{1-2^{-j}}= x^{1-2^{-j-1}},
$$
for all $k\ge j+1$. 
\end{proof}

\begin{lemma}\label{1lem}
Let $k\ge 0$ be fixed. For $ m\ge 1$ with $\Omega(m)=k$, $t\ge 2$ and $x\ge \max(m, F(m)/t)$,  we have
$$
 \frac{x \log t \log 2m}{m \log xt \log 2x} \gg_k 1.
$$
\end{lemma}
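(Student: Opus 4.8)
The plan is to reduce the claimed lower bound to the single inequality $m \le x^{1/2} \cdot (\text{something polynomial in } \log)$ together with elementary bounds on $\log 2m$ versus $\log 2x$. First I would invoke Lemma \ref{lemprodUB}: since $x \ge F(m)/t$ gives $F(m) \le xt$, the lemma (applied with $xt$ in place of $x$) yields $p_{k-j+1}\cdots p_k \le (xt)^{1-2^{-j}}$ for each $1 \le j \le k$, and taking $j=k$ gives $m = p_1\cdots p_k \le (xt)^{1-2^{-k}}$. Because $k$ is fixed, write $\delta = 2^{-k} > 0$, so $m \le (xt)^{1-\delta}$. This is the key structural input: it forces $m$ to be genuinely smaller than a fixed power of $xt$, which is what makes the factor $x/m$ large enough to absorb the logarithmic losses.

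Next I would dispose of the logarithmic factors. We always have $\log 2m \gg 1$ (indeed $\log 2m \ge \log 2$), and since $m \le (xt)^{1-\delta} \le (xt)$ we have $\log 2m \le \log 2xt \ll \log xt$; more to the point $\log xt \le \log x \cdot t$-type bounds let us compare $\log xt$ with $\log 2x$ and $\log t$. The only delicate case is when $t$ is comparable to or larger than $x$: then $\log t / \log xt \gg 1$ while $\log 2x$ may be as small as $\log 2$, so $\log t /(\log xt \log 2x) \gg 1/\log 2x$, and we need the remaining factor $x \log 2m / (m)$ to compensate a possible $\log 2x$ in the denominator. Here I would split into the regime $x \le t$ versus $x > t$.

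In the regime $x \le t$ (equivalently $v = \log x/\log t \le 1$), we have $m \le (xt)^{1-\delta} \le t^{2(1-\delta)}$, but more usefully $x \ge \max(m, F(m)/t) \ge m$, so $x/m \ge 1$; moreover $\log t \ge \tfrac12 \log xt$, hence $\log t/\log xt \gg 1$, and it suffices to show $x \log 2m/(m \log 2x) \gg_k 1$. Since $x \ge m$ and, when $m \le \sqrt x$ say, $x/m \ge \sqrt x \gg \log 2x$ for $x$ large (and the range $m \le x \le m^2$ is handled by noting $\log 2x \ll \log 2m$ there, so $\log 2m/\log 2x \gg 1$ and $x/m \ge 1$) — the bound follows. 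In the regime $x > t$ (so $v > 1$, $\log xt \asymp \log x$), we have $\log t/\log xt \gg \log t/\log x = 1/(v) \cdot $, and combined with $m \le (xt)^{1-\delta} \le x^{2(1-\delta)}$ one gets $x/m \ge x^{1-2(1-\delta)} = x^{2\delta - 1}$; when $\delta$ is small this exponent can be negative, so instead I would use the sharper consequence $m \le (xt)^{1-\delta} < (x^2)^{1-\delta} = x^{2-2\delta}$ only when $t \le x$, giving nothing new, and fall back on the trivial $m \le x$ combined with $F(m) \le xt$. The clean route is: from $F(m) \le xt$ and Lemma \ref{lemprodUB} with exponent $j = \lceil \log_2(2k) \rceil$ or simply $j=k$, one has $m/p_1 \le (xt)^{1-\delta'}$ type bounds that, after a short case analysis on whether $x \ge m$ or not, always produce $x \log t \log 2m \gg_k m \log xt \log 2x$.

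The main obstacle I anticipate is bookkeeping the case $t \ge x$ correctly: there the "expected" heuristic $m \le x^{1/2}$ is replaced by the much weaker $m \le (xt)^{1-\delta}$, and one must genuinely exploit that $n \in \mathcal{D}_m$ also forces $F(m) \le xt$ (not just $m \le x$) to get any useful savings, while simultaneously the denominator $\log 2x$ can be as small as an absolute constant. I expect the resolution is to observe that when $t$ is large, $\log t/\log xt$ is bounded below by a positive constant, so one only needs $x \log 2m \gg_k m \log 2x$, and this in turn follows because $x \ge m$ forces $x/m \ge 1$ and, whenever $x \ge m^2$, one has $x/m \ge \sqrt x \gg \log 2x$, whereas $m \le x < m^2$ forces $\log 2x \asymp \log 2m$. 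All the estimates are elementary once Lemma \ref{lemprodUB} is in hand; the only real work is organizing the case split so that every branch closes.
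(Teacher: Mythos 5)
There is a genuine gap, and it sits exactly where the lemma has its content. Your reduction works fine when $\log t \gg_k \log xt$ (your ``$t$ large'' branch: then one only needs $x\log 2m \gg_k m\log 2x$, which follows from $x\ge m$ and the split $x\ge m^2$ versus $m\le x<m^2$; this part is correct, and incidentally shows that $t\ge x$ is the \emph{easy} case, not the main obstacle as you suggest). The problem is the regime where $t$ is small compared to $x$, e.g.\ $t$ bounded and $x$ large: there $\log t/\log xt \asymp 1/\log x$ and the denominator $\log xt\log 2x \asymp \log^2 x$, so you genuinely need a power saving $x/m \gg_k (\log x)^{2}$, i.e.\ $m\le x^{1-c_k}$. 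Your only tool there is $m\le (xt)^{1-\delta}$ with $\delta=2^{-k}$, and as you yourself observe, combining this with $t\le x$ only gives $m\le x^{2-2\delta}$, which is weaker than the trivial $m\le x$ for every $k\ge 1$; the subsequent appeal to ``$m/p_1\le (xt)^{1-\delta'}$ type bounds'' and ``a short case analysis'' is not an argument, and no variant of Lemma \ref{lemprodUB} applied to $m/p_1$ recovers the loss.

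The missing idea is to split at the threshold $t\gtrless x^{2^{-k}}$ rather than at $t\gtrless x$. If $t\ge x^{2^{-k}}$ (or $m=1$), then $\log t\gg_k\log xt$ and your elementary branch closes the case. If $m\ge 2$ and $t< x^{2^{-k}}$, then the bound $m\le (xt)^{1-2^{-k}}$ from Lemma \ref{lemprodUB} can be fed the smallness of $t$ itself: $m\le (xt)^{1-2^{-k}} < x^{(1+2^{-k})(1-2^{-k})}=x^{1-4^{-k}}$, a genuine power saving in $x$ alone, whence $m\ll_k x/(\log xt)^2$ and
\begin{equation*}
\frac{x\log t\log 2m}{m\log xt\log 2x}\gg_k \frac{\log xt\,\log t\,\log 2m}{\log 2x}\gg 1 ,
\end{equation*}
since $\log 2x\ll\log xt$ here. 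This is exactly how the paper argues (its proof is two lines: ``obvious if $m=1$ or $t\ge x^{2^{-k}}$; otherwise $m\le(xt)^{1-2^{-k}}<x^{1-4^{-k}}\ll_k x/(\log xt)^2$''). Without this step your proof does not cover, for instance, $t=2$, $k\ge 1$, and $m$ as large as the constraint $F(m)\le 2x$ allows, which is the case actually used in the proof of Theorem \ref{thmDqdv}.
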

\begin{proof}
This is obvious if $m=1$ or if $t\ge x^{2^{-k}}$. If $m\ge 2$ and  $t<x^{2^{-k}}$, then $F(m)\le xt$ and Lemma \ref{lemprodUB} imply
$m\le (xt)^{1-2^{-k}}< x^{1-4^{-k}} \ll_k x/(\log xt)^2$, from which the claim follows. 
\end{proof}

\begin{proof}[Proof of Theorem \ref{thmDqdv}]
In the remainder of this paper, we write $D_q(x)$ for $D_q(x,t)$ and $D(x)$ for $D(x,t)$. 
We will show by induction on $k\ge 0$ that, for $k=\Omega(q)$, the estimates \eqref{eqDqdv0}, \eqref{eqDqdv} and 
\eqref{etaeq}  hold  and 
that, for primes $r$ with $r\ge P^+(q)$, we have  
\begin{equation}\label{indhyp}
\sum_{n \in \mathcal{D}_q \atop n\ge r/t}  \Phi(x/rn,nt)
 -\frac{x}{r} \sum_{n \in \mathcal{D}_q \atop n\ge r/t}\lambda_n
\ll 1+ \frac{x \log t \log qr \log qrt}{qr \log^3 xt}=:R_2(x,qr),
\end{equation}
for $x\ge \max(qr,F(qr)/t)$.
Note that \eqref{eqDqdv} and \eqref{dest} imply
\begin{equation}\label{indUB}
D_q(x) \ll_k  x d(v) \eta_{q,t} \asymp_k \frac{x \log t}{q \log xt}, \quad (x\ge 1, q\ge 1, t\ge 2),
\end{equation}
since $D_q(x)=0$ if $q>x$. 

When $k=0$, $q=1$, \eqref{eqDqdv0} is \cite[Eq. (13)]{PDD}, \eqref{eqDqdv} is \cite[Thm. 1.3]{PDD}
and \eqref{etaeq} is \cite[Eq. (6)]{PDD}. 
To show \eqref{indhyp} for $k=0$ and $q=1$, assume that $r$ is prime and $x\ge \max(r,r^2/t)$. 
Equations \eqref{phi0} and \eqref{lambda0} show that 
\begin{equation}\label{lastrow}
\begin{split}
& \sum_{n \in \mathcal{D} \atop n\ge r/t}  \Phi(x/rn,nt)
 - \frac{x}{r}\sum_{n \in \mathcal{D} \atop n\ge r/t}  \lambda_n \\
 & = -\{x/r\} -\sum_{n \in \mathcal{D} \atop n< r/t}  \Phi(x/rn,nt)
 +\frac{x}{r}\sum_{n \in \mathcal{D} \atop n< r/t}\lambda_n \\
 & \ll 1+ D(r/t) 
+ \sum_{n\in \mathcal{D}\atop n< r/t} \frac{x}{rn \log nt} 
\exp\left(-\frac{\log xt/r}{3\log nt}\right),
\end{split}
\end{equation}
by the second estimate in Lemma \ref{PhiLemma}.
We have $ D(r/t)\ll r \log t /(t \log r)$ by Lemma \ref{DqUB}, so $ D(r/t) \ll R_2(x,r)$ follows from 
$$
\frac{r}{\log^2  r \log rt } \ll \frac{xt/r}{\log^3 xt} \asymp \frac{xt/r}{\log^3 (xt/r)},
$$
since $xt/r \ge \sqrt{xt}$. This holds because $r \le xt/r$. 
Finally, the last sum in \eqref{lastrow} is $\ll R_2(x,r)$ by Lemma \ref{DqUB}.
Thus, \eqref{indhyp} holds for $k=0$.

For the inductive step, 
assume that \eqref{eqDqdv} (and hence \eqref{indUB}) and \eqref{etaeq} hold for 
$q\in \mathbb{N}$ with $\Omega(q)= k$ for some $k\ge 0$.
If $k\ge 1$, assume that \eqref{indhyp} holds for $\Omega(q) = k-1$. 
Let $r$ be a prime with $r\ge P^+(q)$ and write $m=qr$.
We note that in the remainder of this proof, all implied constants in the $\ll$ and big-O notation may depend on $k$.

We estimate the first sum in \eqref{phik} with Lemma \ref{PhiLemma} and apply \eqref{lambdak} to get
\begin{equation}\label{BigEqk}
\begin{split}
 D_{m}(x) & + \sum_{n\in \mathcal{D}_m} \frac{x}{n \log nt} 
\left\{w\left(\frac{\log x/n}{\log nt}\right)-e^{-\gamma}-\left.\frac{n^2 t}{x}\right|_{n^2\le  \frac{x}{t}} 
\! + O\left(\frac{e^{-\frac{\log x/n}{3\log nt }}}{\log nt}\right)\right\}\\
&=\sum_{n \in \mathcal{D}_q \atop n\ge r/t}  \Phi(x/rn,nt)
 - \frac{x}{r} \sum_{n \in \mathcal{D}_q \atop n\ge r/t} \lambda_n .
\end{split}
\end{equation}
The contribution from the last two terms in the first sum in \eqref{BigEqk} is
$$
\ll \tilde{R}_1(x):= \frac{x \log mt}{m (\log xt)^2},
$$
by Lemma \ref{DqUB}. In the second application of this argument we will be able to replace $ \tilde{R}_1(x)$
by the smaller
$$
R_1(x) :=\frac{x\log t}{m (\log xt)^2}.
$$
 The error from applying Abel summation to the remaining terms of the first sum in \eqref{BigEqk} is also 
$\ll \tilde{R}_1(x)$, since $w(u)-e^{-\gamma} \ll e^{-u}$ and $w'(u)\ll e^{-u}$. 
Thus,
\begin{equation}\label{BigEq2k}
\begin{split}
 D_m(x) = & x \int_1^\infty \frac{D_m(y)}{y^2 \log yt} \left(e^{-\gamma}-   w\left(\frac{\log x/y}{\log yt}\right) \right)dy + O(\tilde{R}_1(x)) \\
&+\sum_{n \in \mathcal{D}_q \atop n\ge r/t}  \Phi(x/rn,nt)
 -\frac{x}{r} \sum_{n \in \mathcal{D}_q \atop n\ge r/t} \lambda_n.
\end{split}
\end{equation}

If $x/r<\max(q,F(q)/t,r/t)$, that is $x<\max(m,F(m)/t)$, then Lemma \ref{lemDzero} shows that the first sum in \eqref{BigEq2k} vanishes, while the second sum is
$$
\ll  \frac{x}{m}\min\left(1,\frac{\log t}{\log r}\right)\ll    \frac{x}{m}\min\left(1,\frac{\log t}{\log m}\right),
$$
by the inductive hypothesis \eqref{indUB} and since $\log r \le \log m \le \log r^{k+1} \ll_k \log r$.
Define
$$
R_2(x):=
  \begin{cases} 
     \frac{x}{m}\min\left(1,\frac{\log t}{\log m}\right) & \text{if } x <\max(m,F(m)/t) \\
   1+\frac{x \log t \log m \log mt}{m\log^3 xt}       & \text{if } x \ge \max(m,F(m)/t).
  \end{cases}
$$
Thus,
\begin{equation}\label{inteq}
D_m(x) = x \int_1^\infty \frac{D_m(y)}{y^2 \log yt} \left(e^{-\gamma}-   w\left(\frac{\log x/y}{\log yt}\right) \right)dy
+ R(x)
\end{equation}
holds with  $R(x) \ll \tilde{R}_1(x) + R_2(x)$ when $x<\max(m,F(m)/t)$. 

Assume now that $x\ge \max(m,F(m)/t)$.
If $k=0$ and $q=1$, we have already shown that the last row of \eqref{BigEq2k} is $\ll R_2(x)$. 
If $k\ge 1$, write $q=u v$ where $v=P^+(q)$. 
Equations \eqref{phik} and \eqref{lambdak} show that the last row of \eqref{BigEq2k} equals

\begin{equation*}
\begin{split}
=&\sum_{n \in \mathcal{D}_u \atop n\ge v /t}  \Phi(x/nrv,nt)
-\frac{x}{rv} \sum_{n \in \mathcal{D}_u \atop n\ge v /t} \lambda_n
-\sum_{n \in \mathcal{D}_q \atop n< r /t}  \Phi(x/nr,nt)
+\frac{x}{r} \sum_{n \in \mathcal{D}_q \atop n< r /t} \lambda_n \\
=& S-T - (U-V),
\end{split}
\end{equation*}
say. 
Now $x\ge \max(m,F(m)/t)$ implies $x/r \ge \max(q,F(q)/t,r/t)$. 
The inductive hypothesis \eqref{indhyp} yields
\begin{equation*}
S-T  \ll 1+\frac{(x/r) \log t \log q \log qt}{q \log^3 (xt/r)}\ll 1+ \frac{x\log t \log m \log mt}{m \log^3 xt}=R_2(x),
\end{equation*}
since $xt/r \ge \sqrt{xt}$. 
The second estimate in Lemma \ref{PhiLemma} and \eqref{indUB} show that
\begin{equation*}
\begin{split}
U-V  & \ll D_q(r/t) +
\sum_{n \in \mathcal{D}_q \atop n< r /t} \frac{x}{n r \log nt}\exp\left(-\frac{\log \frac{x}{nr}}{3\log nt}\right)\\
& \ll \frac{r^2 \log t}{mt \log r} +R_2(x)\ll R_2(x) ,
\end{split}
\end{equation*}
where the last assertion is implied by
$$
 \frac{r}{\log r \log m \log mt} \ll \frac{xt/r}{\log^3 (xt/r)},
$$
which holds because $mt > m\ge r$ and $r\le xt/r$.
As the last row of \eqref{BigEq2k} is $\ll R_2(x)$, 
we have established that \eqref{indhyp} holds for $\Omega(q)\le k$
and that \eqref{inteq} holds with  $R(x) \ll \tilde{R}_1(x) + R_2(x)$.

We need to estimate $I(x)$ and $J(x)$ from Lemma \ref{Laplace}. 
For this purpose we may assume that $x\ge \max(m,F(m)/t)$. If not, 
Lemma \ref{lemDzero} shows that $D_m(x)=0$, so that \eqref{eqDqdv0} holds
since the main term in \eqref{eqDqdv0} is absorbed by the error terms. 
We find that 
$I(x)+ J(x) \ll \tilde{R}_1(x)+R_2(x)$ and $\beta_t \ll \frac{\log m}{m\log t}$ so that $x\beta_t (v+1)^{-3} \ll R_2(x)$. 

The conclusion of Lemma \ref{Laplace} is that 
\begin{equation}\label{conclusion}
D_m(x) = x \eta_{m,t} d(v) + O\bigl( \tilde{R}_1(x)+R_2(x)\bigr).
\end{equation}
The lower bound in Lemma \ref{DqUB} yields $ \eta_{m,t}\gg m^{-1}$. 
 Lemma \ref{cUBlem} shows that $\eta_{m,t}\ll_k m^{-1}$. 
 Together with \eqref{conclusion}, Lemmas \ref{lemDzero} and \ref{1lem}, this implies
\begin{equation}\label{Dupper}
D_m(x)\ll_k \frac{x\log t}{m\log xt}, \quad (x\ge 1, t\ge 2, \Omega(m)\le k+1).
\end{equation}
Running through this proof a second time  with this upper bound replacing the one in Lemma \ref{DqUB},
we can replace $\tilde{R}_1(x)$ by $R_1(x)$ 
to obtain
\begin{equation*}
D_m(x) = x \eta_{m,t} d(v) + O\bigl(R_1(x)+R_2(x)\bigr),
\end{equation*}
where $1/m \ll  \eta_{m,t} \ll_k 1/m $. 
This shows that \eqref{eqDqdv0} and \eqref{etaeq} hold with $q$ replaced by $m=qr$. 

To see that \eqref{eqDqdv0} implies \eqref{eqDqdv}, note that
the term $O(1)$ is acceptable by Lemma \ref{1lem}, provided $x\ge \max(m,F(m)/t)$. 
If $x<  \max(m,F(m)/t)$, then $D_m(x)=0$ and $x<F(m)\le m^2$, so \eqref{eqDqdv} (with $q$ replaced by $m$)
follows from $\log x \le 2 \log m$. 
This completes the proof 
of Theorem \ref{thmDqdv}.
\end{proof}

\begin{lemma}\label{cUBlem}
Assume that \eqref{conclusion} holds for $m$ with $\Omega(m)\le k+1$,
and that $\eta_{q,t} \ll_k q^{-1}$ for $\Omega(q)\le k$. Then $\eta_{m,t} \ll_k m^{-1} $ for $\Omega(m)=k+1$. 
\end{lemma}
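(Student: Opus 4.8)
The plan is to bootstrap the upper bound on $\eta_{m,t}$ from the already‑established asymptotic \eqref{conclusion} for $D_m(x)$, combined with the inductive hypothesis that $\eta_{q,t}\ll_k q^{-1}$ whenever $\Omega(q)\le k$. Write $m=qr$ with $r=P^+(m)$ prime, so $\Omega(q)=k$ and $r\ge P^+(q)$. The strategy is to evaluate \eqref{conclusion} at a well‑chosen value of $x$: we want $x$ large enough that the main term $x\eta_{m,t}d(v)$ dominates the error $\tilde R_1(x)+R_2(x)$, while keeping an independent upper bound on $D_m(x)$ so that we can solve for $\eta_{m,t}$. The natural choice is to take $x$ a fixed power of $\max(m,F(m)/t)$, say $x=\max(m,F(m)/t)^{A}$ for a suitable absolute constant $A$, which guarantees $x\ge\max(m,F(m)/t)$ (so Lemma \ref{lemDzero} does not force $D_m(x)=0$) and makes $v=\log x/\log t$ comparable, up to constants depending on $k$, to a quantity we control.

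The key steps, in order, are: (i) Fix $x$ as above and observe, via \eqref{conclusion} together with \eqref{dest} (i.e. $(v+1)d(v)=C+O((v+1)^{-2.03})$, so $d(v)\asymp (v+1)^{-1}$), that
$$
x\eta_{m,t}\,\frac{1}{v+1} \ll_k D_m(x) + \tilde R_1(x) + R_2(x).
$$
(ii) Bound $D_m(x)$ from above independently of $\eta_{m,t}$: since every $n\in\mathcal D_m(x,t)$ is a multiple of $q$, we have $D_m(x)=D_{qr}(x)\le D_q(x/r,qt)$ or more simply $D_m(x)\le D_q(x)/1$ bounded using the inductive estimate \eqref{indUB}/\eqref{Dupper} for $\Omega(q)=k$, namely $D_q(y)\ll_k y\log t/(q\log yt)$; refining, use that $m\mid n$ to get $D_m(x)\le \frac1q\cdot(\text{count of }t\text{-dense multiples of }r)$ and feed in $\eta_{q,t}\ll_k q^{-1}$. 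This yields $D_m(x)\ll_k x\log t/(m\log xt)$. (iii) Bound the error terms: by their definitions $\tilde R_1(x)=x\log mt/(m(\log xt)^2)$ and $R_2(x)\ll 1+x\log t\log m\log mt/(m\log^3 xt)$, and with our choice of $x$ both are $\ll_k x\log t/(m\log xt)\cdot (\text{something}\to 0)$, hence $\ll_k x/(m(v+1))$ after using $\log xt\asymp_k \log t\,(v+1)$. (iv) Combine: the displayed inequality in (i) becomes $x\eta_{m,t}/(v+1)\ll_k x/(m(v+1))$, and dividing through by $x/(v+1)$ gives $\eta_{m,t}\ll_k m^{-1}$, as required.

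The main obstacle is step (ii): obtaining an upper bound for $D_m(x)$ that does \emph{not} already presuppose the bound $\eta_{m,t}\ll_k m^{-1}$ we are trying to prove. The clean way around this is to use the divisor relation from Lemma \ref{DqUB}, $D_q(x,t)\le D(x/q,qt)$ applied with $q=m$, together with the \emph{already proven} first estimate of Lemma \ref{DqUB}, $D(y,s)\ll y\log s/\log ys$; this gives $D_m(x)\le D(x/m,mt)\ll (x/m)\log mt/\log(xt)$, which after our choice of $x$ (so that $\log mt\ll_k \log t\,(v+1)\asymp_k \log xt$, using $\Omega(m)=k+1$ fixed and Lemma \ref{lemprodUB} to control $\log m$ in terms of $\log xt$) is $\ll_k (x/m)\cdot(\text{const}_k)$, and more precisely $\ll_k x\log t/(m\log xt)$. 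With this unconditional bound in hand, the rest is the routine comparison of (i), (iii), (iv) above. One must also check that the error term $R_2(x)$ contributes an additive $O(1)$ which is harmless once $x$ is a fixed power of $\max(m,F(m)/t)\ge m$, since then $x\gg m$ and $x/(m(v+1))\gg 1/(v+1)$ may itself be small — so one should absorb the stray $O(1)$ by noting $\eta_{m,t}\le \eta_{q,t}/r\ll_k 1/(qr)=1/m$ trivially holds once $x$ is bounded, or simply choose $A$ large enough that $x\eta_{m,t}/(v+1)\gg 1$ is guaranteed whenever $D_m(x)\ne 0$, i.e. whenever the inequality is non‑vacuous.
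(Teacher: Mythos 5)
There is a genuine gap, and it sits exactly where you flagged the ``main obstacle'': your unconditional bound for $D_m(x)$ is too weak by a logarithmic factor, and the claimed repair is unjustified. From Lemma \ref{DqUB} you get $D_m(x)\le D(x/m,mt)\ll (x/m)\log(mt)/\log(xt)$, but the asserted refinement ``more precisely $\ll_k x\log t/(m\log xt)$'' requires $\log(mt)\ll_k\log t$, which fails precisely in the range where the lemma is needed: $t$ bounded (say $t=2$) and $m$ a large product of $k+1$ primes, where $\log(mt)/\log t\asymp\log m$ is unbounded. Feeding the honest bound into your step (i) only yields $\eta_{m,t}\ll_k\log(mt)/(m\log t)$, not $\eta_{m,t}\ll_k m^{-1}$. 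No choice of the exponent $A$ (or of $x$ at all) rescues this, because the main term $x\eta_{m,t}d(v)$ and the bound $D(x/m,mt)$ scale identically in $x$, so the loss $\log(mt)/\log t$ is intrinsic to that comparison; the variant $D_m(x,t)\le D_q(x/r,rt)\ll_k (x/r)\log(rt)/(q\log xt)$, using \eqref{indUB} at level $k$ with the inflated parameter $rt$, loses the same factor. Obtaining the bound you actually need, $D_m(x)\ll_k x\log t/(m\log xt)$ for $\Omega(m)=k+1$, is estimate \eqref{Dupper}, which in the paper is deduced \emph{from} Lemma \ref{cUBlem}; so that route is circular. (Your fallback remark that ``$\eta_{m,t}\le\eta_{q,t}/r$ trivially holds'' is likewise not available: nothing proved at this stage gives monotonicity in the prime $r$; comparing $D_m\le D_q$ only yields $\eta_{m,t}\ll_k q^{-1}$, with the factor $r$ lost.)

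The paper's proof takes a different and essentially unavoidable route: it applies the Dirichlet-series identity \eqref{muk} with $m=qr$ and lets $s\to 1^{+}$ as in \cite{CFAE}, using \eqref{conclusion} to identify the limits; this gives $r c_{qr}\ll S\log t+S\log r+c_q$ with $S=\sum_{n\ge r/t,\ q\mid n}\lambda_n$. The point is that $S$ is then bounded by $q^{-1}\min\bigl(1,\log t/\log r\bigr)$ using the level-$k$ bound \eqref{Dupper} (available by the inductive hypothesis), and the factor $\min(1,\log t/\log r)$ exactly cancels the $\log r$ (equivalently, the $\log(mt)/\log t$ loss that defeats your counting argument), yielding $rc_{qr}\ll q^{-1}\log t$, i.e.\ $\eta_{m,t}\ll_k m^{-1}$. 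If you want to keep a purely counting-based argument, you would need some substitute input playing the role of this cancellation; as written, your step (ii) does not provide it.
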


\begin{proof}
Let $m=qr$ where $r$ is prime and $r\ge P^+(q)$. Equation \eqref{muk} yields
\begin{equation*}
 \sum_{n\ge 1 \atop qr|n} \lambda_n(s) \mu_n(s)
= \frac{1}{r^s} \sum_{n\ge r/t \atop q|n} \lambda_n(s) 
 \bigl(\mu_n(s)-\log r \bigr).
\end{equation*}
As in \cite{CFAE}, we let $s=1+1/\log^2 N$ and split the sum on the left according to $n\le N$ and $n>N$. 
As $N\to \infty$, the contribution from $n\le N$ converges to (see \cite[Lemma 3]{CFAE})
$\sum_{ qr|n} \lambda_n \mu_n$,
while the contribution from $n>N$ converges to (see \cite[Lemma 4]{CFAE})
$
-c_{qr} (1-e^{-\gamma}), 
$
by \eqref{conclusion}, where $\eta_{qr,t}=C c_{qr} \log t$. 
Applying the same reasoning to the sum on the right-hand side, we get 
$$
 \sum_{n\ge 1 \atop qr|n} \lambda_n \mu_n -c_{qr} (1-e^{-\gamma})= \frac{1}{r} \Bigl(\sum_{n\ge r/t \atop q|n} \lambda_n \bigl(\mu_n-\log r \bigr) - c_q(1-e^{-\gamma}) \Bigr).
 $$
 Since $\mu_n \ll \log t $ , we obtain
$$
 r c_{qr} \ll S \log t + S \log r+ c_q,
\quad 
S:=  r \sum_{n\ge 1 \atop qr|n} \lambda_n =\sum_{n\ge r/t \atop q|n} \lambda_n ,
$$
by \eqref{lambdak}.
Now $\eta_{q,t}\ll q^{-1}$ and \eqref{Dupper} holds with $q$ in place of $m$. Thus,
$c_q \ll q^{-1}\log t$ and 
$
S= \sum_{n\ge r/t \atop q|n} \lambda_n \ll q^{-1} \min(1,\log t/ \log r).
$
This shows that 
$
r c_{qr} \ll q^{-1} \log t
$,
which is the desired result. 
\end{proof}

\section{Proof of Theorem \ref{thmcpq}}\label{secpot3}

\begin{proof}[Proof of \eqref{c1tasymp}, \eqref{cqtasymp1} and \eqref{cqtasymp2}]
The estimate \eqref{c1tasymp} is \cite[Cor. 3]{CFAE}.
Equation \eqref{muk}, with $k=1$ and $q_1=q$ prime,  is
\begin{equation}\label{Derivative}
 \sum_{n\ge 1 \atop q|n} \lambda_n(s) \mu_n(s)
= \frac{1}{q^s} \sum_{n\ge q/t} \lambda_n(s) 
 \bigl(\mu_n(s)-\log q \bigr).
\end{equation}
As in \cite{CFAE}, we let $s=1+1/\log^2 N$ and split the sum on the left according to $n\le N$ and $n>N$. 
As $N\to \infty$, the contribution from $n\le N$ converges to (see \cite[Lemma 3]{CFAE})
$\sum_{n\ge 1 \atop q|n} \lambda_n \mu_n$,
while the contribution from $n>N$ converges to (see \cite[Lemma 4]{CFAE})
$
-c_q (1-e^{-\gamma}), 
$
by Corollary \ref{corDqdv}.
Applying the same reasoning to the right-hand side of  \eqref{Derivative}, we get 
$$
 \sum_{n\ge 1 \atop q|n} \lambda_n \mu_n -c_{q} (1-e^{-\gamma})= \frac{1}{q} \Bigl(\sum_{n\ge q/t } \lambda_n \bigl(\mu_n-\log q \bigr) - c_\theta(1-e^{-\gamma}) \Bigr).
 $$
With the estimate  \cite[Lemma 13]{CFAE}
 \begin{equation}\label{muapprox}
\mu_n = \log t -\gamma + O\left(e^{-\sqrt{\log nt}}\right),
\end{equation}
we obtain
\begin{equation}\label{cqS}
 q c_{q} (1-e^{-\gamma})= S\log q + c_\theta (1-e^{-\gamma}) 
  +O\left(S e^{-\sqrt{\log \max(q,t)}}\right),
 \end{equation}
 where
 $$
S:= \sum_{n\ge q/t } \lambda_n = q \sum_{n\ge 1 \atop q|n} \lambda_n,
$$
by \eqref{lambdak}.
If $q \le t$, then $S=1$ by \eqref{lambda0}, so that \eqref{cqtasymp2} follows from \eqref{cqS}.

In general, we have 
\begin{equation}\label{Sest}
S= \frac{e^{-\gamma} c_\theta}{\log q} \left(1+O\left(\frac{1}{\log q} + \frac{\log^2 t}{\log^2 q}\right)\right), 
\quad (q\ge 2, \, t\ge 2).
\end{equation}
If $q\le t$, this holds because $S=1$ and $c_\theta \asymp \log t$. 
If $q\ge t$, \eqref{Sest} follows from applying Abel summation to $S= \sum_{n\ge q/t } \lambda_n$, together with Mertens' formula and Corollary \ref{corDqdv} (with $q=1$).
Combining \eqref{Sest} with \eqref{cqS} proves \eqref{cqtasymp1}. 
\end{proof}

\begin{proof}[Proof of \eqref{cpqtasymp1}, \eqref{cpqtasymp2} and \eqref{cpqtasymp3}]
Equation \eqref{muk}, with $k=2$ and $q_1=p \le q_2=q$  is
\begin{equation}\label{Derivative2}
 \sum_{n\ge 1 \atop pq|n} \lambda_n(s) \mu_n(s)
= \frac{1}{q^s} \sum_{n\ge q/t \atop p|n} \lambda_n(s) 
 \bigl(\mu_n(s)-\log q \bigr).
\end{equation}
As in \cite{CFAE}, we let $s=1+1/\log^2 N$ and split the sum on the left according to $n\le N$ and $n>N$. 
As $N\to \infty$, the contribution from $n\le N$ converges to (see \cite[Lemma 3]{CFAE})
$\sum_{ pq|n} \lambda_n \mu_n$,
while the contribution from $n>N$ converges to (see \cite[Lemma 4]{CFAE})
$
-c_{pq} (1-e^{-\gamma}), 
$
by Corollary \ref{corDqdv}.
Applying the same reasoning to the right-hand side of  \eqref{Derivative2}, we get 
$$
 \sum_{n\ge 1 \atop pq|n} \lambda_n \mu_n -c_{pq} (1-e^{-\gamma})= \frac{1}{q} \Bigl(\sum_{n\ge q/t \atop p|n} \lambda_n \bigl(\mu_n-\log q \bigr) - c_p(1-e^{-\gamma}) \Bigr).
 $$
We estimate $\mu_n$ with \eqref{muapprox} to obtain
\begin{equation}\label{cpqT}
 q c_{pq} (1-e^{-\gamma})= T\log q + c_p (1-e^{-\gamma}) 
  +O\left(T\exp\left(-\sqrt{\log \max(pt,q)}\right)\right),
 \end{equation}
 where
 $$
T:= \sum_{n\ge q/t \atop p|n} \lambda_n = q \sum_{n\ge 1 \atop pq|n} \lambda_n,
$$
by \eqref{lambdak}.

If $p\le q \le t$, then 
$$
T=  \sum_{n\ge 1 \atop p|n} \lambda_n = \frac{1}{p}  \sum_{n\ge p/t} \lambda_n =  \frac{1}{p}  \sum_{n\ge 1} \lambda_n
=\frac{1}{p},
$$
by \eqref{lambdak} and \eqref{lambda0}.
The estimate \eqref{cpqtasymp3} now follows from \eqref{cpqT} with $T=1/p$ and $c_p$ estimated by \eqref{cqtasymp2}.

In general, we have 
\begin{equation}\label{Test}
T= \frac{e^{-\gamma} c_p}{\log q} \left(1+O\left(\frac{1}{\log q} + \frac{\log^2 pt}{\log^2 q}\right)\right),
\quad (q \ge p \ge 2, \, t\ge 2). 
\end{equation}
If $q<pt$, this is implied by $T\ll c_p / \log q$, which follows from \eqref{eqcorDqdv2}  (with $q$ replaced by $p$). 
If $q\ge pt$, we estimate $\lambda_n$ with Mertens' formula and use Abel summation and the estimate \eqref{eqcorDqdv}.
The contribution from the first two error terms  in \eqref{eqcorDqdv} is clearly acceptable, while the term $O(1)$ contributes 
$$
\ll \int_{q/t}^\infty \frac{dy}{y^2 \log yt} \le \frac{t}{q \log q} \asymp \frac{c_p}{\log q} \cdot \frac{pt}{q \log t} 
\ll  \frac{c_p}{\log q} \cdot \frac{\log^2 pt}{\log^2 q}. 
$$

Now substitute \eqref{Test} into \eqref{cpqT} and 
estimate $c_p$ with \eqref{cqtasymp1} to get \eqref{cpqtasymp1},
and with \eqref{cqtasymp2} to get \eqref{cpqtasymp2}.
\end{proof}

\section{Proof of Theorem \ref{thmave}}\label{secpot1}

\begin{proof}[Proof of \eqref{eqave}.]
We have 
$$
\sum_{n\in \mathcal{D}(x)} \omega(n)
= \sum_{p\le x} D_p(x)
= x d(v) \sum_{p\le x} \eta_{p,t} \left(1+O\left(\frac{\log p}{\log x}\right)\right),
$$
by \eqref{eqDqdv}.
The contribution from the error term is $\ll x d(v) \asymp D(x)$, since $\eta_{p,t} \ll 1/p$ by \eqref{etaeq}. 
Now $\eta_{p,t} C \log t = c_p$ and $\eta_{1,t} C \log t = c_\theta$, by Corollary \ref{corDqdv}. 
With \eqref{c1tasymp}, \eqref{cqtasymp1} and \eqref{cqtasymp2}, we find that 
$$
\sum_{p\le x} \eta_{p,t} = \frac{1}{C \log t} \sum_{p\le x} c_{p} =  \frac{c_\theta}{C \log t}(E(x,t) +O(1))
= \eta_{1,t}  (E(x,t) +O(1)).
 $$  
 The result now follows from \eqref{eqDqdv} with $q=1$, that is  $D(x)= x d(v)\eta_{1,t} ( 1+O(1/\log x))$.

To see that \eqref{eqave} remains valid when $\omega$ is replaced by $\Omega$, note that
$$
\sum_{n\in \mathcal{D}(x)} (\Omega(n) -\omega(n))
= \sum_{n\in \mathcal{D}(x)} \sum_{k\ge 2} \sum_{p^k|n} 1
= \sum_{p\le x} \sum_{k\ge 2}  D_{p^k} (x) \ll \frac{x \log t}{\log xt} \ll D(x),
$$
by Lemma \ref{DqUB}. 
\end{proof}

\begin{proof}[Proof of \eqref{eqnorm}.]
We have 
$$ \sum_{n\in\mathcal{D}(x)} \omega(n)^2 
= \sum_{p, q\le x} D_{pq}(x) + O\Bigl(\sum_{p\le x} D_p(x)\Bigr),$$
where $p, q$ run over primes.
The last term is $\ll D(x) \log_2 x$, by \eqref{eqave}.
Thus,  \eqref{eqDqdv} yields
$$
 \sum_{n\in\mathcal{D}(x)} \omega(n)^2 = 
O( D(x) \log_2 x)+
 x d(v)\sum_{p, q \le x} \eta_{pq,t} \left(1+O\left(\frac{\log pq}{\log x}\right)\right).$$
Since $\eta_{pq,t}\ll 1/pq$, the contribution from the error term is $\ll x d(v)  \log_2 x \ll D(x) \log_2 x$. 
With \eqref{cpqtasymp1}, \eqref{cpqtasymp2} and \eqref{cpqtasymp3}, we find that 
\begin{equation*}
\begin{split}
\sum_{p,q \le x} \eta_{pq,t} = \frac{1}{C \log t} \sum_{p,q \le x} c_{pq} 
& =  \frac{c_\theta}{C \log t}\bigl(E(x,t)^2+O(\log_2 x)\bigr) \\ 
& = \eta_{1,t}  \bigl(E(x,t)^2 +O(\log_2 x)\bigr).
\end{split}
\end{equation*}
Combining this with \eqref{eqave} and \eqref{eqDqdv} (with $q=1$), we get
$$
 \sum_{n\in\mathcal{D}(x)} (\omega(n)-E(x,t))^2 
 \ll D(x) \log_2 x,
$$
which implies \eqref{eqnorm}.
This estimate remains valid if $\omega(n)$ is replaced by $\Omega(n)$, since
$$
\sum_{n\in\mathcal{D}(x)} (\Omega(n) - \omega(n))^2 
\le \sum_{p \neq q \le x} \sum_{k,j\ge 2} D_{p^k q^j}(x)
+ \sum_{p\le x} \sum_{k\ge 2} 2k D_{p^k}(x) \ll D(x),
$$
by Lemma \ref{DqUB}.
\end{proof}

\section*{Acknowledgments}
The author thanks Eric Saias and the anonymous referee for several very helpful suggestions.


\begin{thebibliography}{99}

\bibitem{HT}
R. R. Hall and G. Tenenbaum, Divisors, Cambridge University Press, 1988.

\bibitem{Mar}
M. Margenstern, Les nombres pratiques: th\'{e}orie, observations et conjectures,
\textit{J. Number Theory} \textbf{37} (1991), 1--36.

\bibitem{Nic}
J.-L. Nicolas,
Sur la distribution des nombres entiers ayant une quantit\'e fix\'ee de
facteurs premiers,
Acta Arith. \textbf{44} (1984), 191--200.

\bibitem{PTW}
C. Pomerance,  L. Thompson, A. Weingartner, On integers $n$ for which $X^n-1$ has a divisor of every degree, 
Acta Arith. \textbf{175} (2016), no. 3, 225--243.

\bibitem{PW}
C. Pomerance and A. Weingartner, On primes and practical numbers, arXiv: 2007.11062

\bibitem{Saias1}
E. Saias, Entiers \`{a} diviseurs denses 1, \textit{J. Number
Theory} \textbf{62} (1997), 163--191.

\bibitem{Sier}
W. Sierpi\'nski, Sur une propri\'et\'e des nombres naturels,
\textit{Ann. Mat. Pura Appl. (4)} \textbf{39} (1955), 69--74.

\bibitem{Sri}
A. K. Srinivasan, 
Practical numbers, \textit{Current Sci.} \textbf{17} (1948), 179--180.

\bibitem{Stew}
B. M. Stewart, Sums of distinct divisors, \textit{Amer. J. Math.} \textbf{76} (1954), 779--785. 

\bibitem{Ten86}
G. Tenenbaum, Sur un probl\`{e}me de crible et ses applications,
\textit{Ann. Sci. \'{E}cole Norm. Sup. (4)} \textbf{19} (1986),
1--30.

\bibitem{Ten}
G. Tenenbaum, Introduction to Analytic and Probabilistic Number
Theory, Cambridge Studies in Advanced Mathematics, Vol. 46,
Cambridge Univ. Press, Cambridge, 1995.

\bibitem{Thompson}
L. Thompson, Polynomials with divisors of every degree, \textit{J. Number Theory} \textbf{132} (2012), 1038--1053.

\bibitem{IDD3}
A. Weingartner, Integers with dense divisors 3, \textit{J. Number Theory} \textbf{142} (2014), 211-222.

\bibitem{PDD}
A. Weingartner, Practical numbers and the distribution of divisors, \textit{Q. J. Math.} \textbf{66} (2015), no. 2, 743--758. 

\bibitem{SPA}
A. Weingartner, A sieve problem and its application, 
\textit{Mathematika} \textbf{63} (2017), no. 1, 213--229. 

\bibitem{CFAE}
A. Weingartner, On the constant factor in several related asymptotic estimates, \textit{Math. Comp.} \textbf{88} (2019), no. 318, 1883--1902.

\bibitem{avetaupaper}
A. Weingartner, The mean number of divisors for rough, dense and practical numbers, preprint arXiv:2104.07137.

\end{thebibliography}
\end{document}